\documentclass[11pt]{article}
\usepackage[utf8]{inputenc}

\usepackage[numbers,sort&compress]{natbib}

\usepackage{algorithm}

\usepackage{pgf}
\usepackage{tikz}
\usetikzlibrary{arrows, decorations.pathmorphing, backgrounds, positioning, fit, petri, automata}
\definecolor{yellow1}{rgb}{1,0.8,0.2}

\usepackage{makecell}

\usepackage{amsmath,amsthm,amsfonts,amssymb,amsbsy,amsopn,amstext}
\usepackage{graphicx,color}
\usepackage{mathbbol,mathrsfs}
\usepackage{float,ccaption}
\usepackage{indentfirst}
\usepackage{appendix}
\usepackage{subfigure}

\usepackage{emptypage}

\usepackage{booktabs}
\usepackage{threeparttable}
\usepackage{multirow}
\usepackage{diagbox}


 \newtheorem{thm}{Theorem}
 
 \newtheorem{lem}{Lemma}

 \newtheorem{ass}{Assumption}

\newcommand{\Cov}{\ensuremath{\mathrm{Cov}}} 
\newcommand{\define}{:=}
\newcommand{\A}{\mathbf{A}}
\newcommand{\Z}{\mathbf{B}}
\newcommand{\X}{\overline{c}}

\newcommand{\normm}[1]{{\left\vert\kern-0.25ex\left\vert\kern-0.25ex\left\vert #1
		\right\vert\kern-0.25ex\right\vert\kern-0.25ex\right\vert}}




\usepackage{geometry}
\geometry{a4paper,left=2cm,right=2cm,top=1.2cm,bottom=1.8cm}

\setlength{\parskip}{0.7ex plus0.3ex minus0.3ex}

\setlength{\textwidth}{16cm} \setlength{\textheight}{23cm}
\setlength{\oddsidemargin}{0.0cm} \setlength{\evensidemargin}{0.0cm}
\setlength{\topmargin}{-1.5cm} \setlength{\parskip}{0.25cm}

\begin{document}
\title{Distributed Stochastic Compositional Optimization Problems over Directed Networks}

\author{Shengchao Zhao and Yongchao Liu\thanks{School of Mathematical Sciences, Dalian University of Technology, Dalian 116024, China, e-mail: zhaoshengchao@mail.dlut.edu.cn (Shengchao Zhao), lyc@dlut.edu.cn (Yongchao Liu) }}
\date{}
\maketitle
\noindent{\bf Abstract.} We study the distributed stochastic compositional optimization problems over  directed communication networks in which
agents privately own a stochastic compositional objective function and collaborate to minimize the sum of all objective functions. We   propose a distributed
stochastic compositional gradient descent method, where the gradient tracking and the stochastic correction techniques are employed to adapt to the networks' directed structure and increase the accuracy of inner function estimation. When
the objective function
is smooth, the proposed method  achieves the  convergence rate $\mathcal{O}\left(k^{-1/2}\right)$ and sample complexity $\mathcal{O}\left(\frac{1}{\epsilon^2}\right)$ for
  finding the ($\epsilon$)-stationary point. When the objective function is strongly convex, the convergence rate is improved to $\mathcal{O}\left(k^{-1}\right)$. Moreover,
  the asymptotic normality of Polyak-Ruppert averaged iterates of the proposed method is also presented. We demonstrate the empirical performance of the proposed method  on  model-agnostic meta-learning problem and logistic regression problem.	
	
\noindent\textbf{Key words.} distributed stochastic compositional optimization, directed communication networks, gradient tracking, asymptotic normality
\section{Introduction}	
Stochastic compositional optimization problem  (SCO) has received extensive attention recently for its application in machine learning, stochastic programming and financial engineering, etc,  \cite{wang2017stochastic,qi2021stochastic,tianyi2021sol,guo2021randomized,wang2016acce}, which is in the form of
\begin{equation}
\label{eq:sco}
\min_{x\in\mathbb{R}^d} f(g(x)),
\end{equation}
where $f(g(x))=(f\circ g)(x)$ denotes the function composition, $f(z)\define \mathbb{E}\left[F(z;\zeta)\right],~g(x)\define\mathbb{E}\left[G(x;\phi)\right]$, $G(\cdot;\phi):\mathbb{R}^d\rightarrow \mathbb{R}^p$ and $F(\cdot;\zeta):\mathbb{R}^p\rightarrow \mathbb{R}$ are measurable functions parameterized by random variables $\zeta$ and $\phi$ respectively.

To solve the stochastic compositional optimization problem (\ref{eq:sco}), one may employ the two sample based popular schemes in stochastic optimization,  sample average approximation (SAA) and stochastic approximation (SA).
For the SAA scheme,   Dentcheva et al. \cite{Dentcheva2017}  discuss the asymptotic behavior  of the SAA problem and establish the central limit theorem for the optimal value.    Ermoliev and  Norkin \cite{Ermoliev2013}
study the  conditions for convergence in mean, almost surely of SAA problem and
provide  the large
deviation bounds for the  optimal values.
 The SA based method for SCO can be traced back to 1970s \cite{yu1976stochastic}, in which penalty functions for stochastic constraints and composite
regression models are considered. More recently,
 Wang et al. \cite{wang2017stochastic} present the stochastic compositional gradient descent  method (SCGD) for problem (\ref{eq:sco}), which is defined as follows
 \begin{equation}\label{z}
 \begin{aligned}
 &z_{k+1}=(1-\beta_k)z_k+\beta_kG(x_k;\phi_k),\\
 &x_{k+1}=x_k-\alpha_k\nabla G(x_k;\phi_k)\nabla F(z_{k+1};\zeta_k),
 \end{aligned}
 \end{equation}
where stepsize   $\alpha_k$ diminishes to zero at a faster rate than  $\beta_k$, iterates $z_{k+1}$ and $x_k$ are the estimations of inner function value $g(x_k)$ and decision variable respectively.
  For nonsmooth convex problems, the SCGD \cite{wang2017stochastic} achieves a convergence rate of $\mathcal{O}\left(k^{-1 / 4}\right)$ in the general case and $\mathcal{O}\left(k^{-2 / 3}\right)$ in the strongly convex case.
  An accelerated variant of SCGD with improved convergence
rate has been presented in \cite{wang2016acce},  where an   extrapolation-smoothing scheme is introduced.
Moreover, the variance reduction techniques, such as SVRG and SARAH, have been merged into the compositional optimization framework \cite{Huo18,Liu2021,Zhang2021multilevel}.
Note the fact that the two-timescale structure of SCGD may  decrease
the convergence rate,  Ghadimi et al. \cite{ghadimi2019single}  propose a
nested
averaged stochastic approximation method  to solve SCO (1), which is a single-timescale method    and achieves the optimal convergence rate $\mathcal{O}\left(k^{-1/2}\right)$ as   methods
for one-level unconstrained stochastic optimization.  Chen et al. \cite{tianyi2021sol} propose  the  stochastically corrected stochastic compositional gradient method (SCSC),
which  is also a single-timescale method  and achieves the optimal convergence rate  $\mathcal{O}\left(k^{-1/2}\right)$.
Specially, the SCSC read as follows:
\begin{equation*}
\begin{aligned}
&z_{k+1}=(1-\beta_k)(z_k+G(x_k;\phi_k)-G(x_{k-1};\phi_k))+\beta_kG(x_k;\phi_k),\\
&x_{k+1}=x_k-\alpha_k\nabla G(x_k;\phi_k)\nabla F(z_{k+1};\zeta_k),
\end{aligned}
\end{equation*}
 where stepsize $\alpha_k$ does not have to decay to zero at a faster rate than $\beta_k$.
Compared with SCGD, SCSC adds an extra term $G(x_k;\phi_k)-G(x_{k-1};\phi_k)$ in recursion (\ref{z}), which may reduce the tracking variance  of $g(x_k)$.
We refer \cite{Andrzej2021Multilevel,Yang2019multilevel,Zhang2021multilevel,balasubramanian2022multilevel,jiang2022multilevel} for the new developments  on  multilevel compositional optimization and \cite{Dai2017learning,Hu2020sample} on conditional stochastic optimization.


Note that  the  machine learning and financial engineering  problems tend to be characterized by large scale or distributed storage of data, consequently it is necessary to study the distributed stochastic compositional optimization problems.
  Gao and Huang \cite{gao2021fast}  first consider the distributed  stochastic compositional optimization problem (DSCO)
\begin{equation}\label{model}
\min_{x\in\mathbb{R}^d} h(x)\define\frac{1}{n}\sum_{j=1}^n f_j(g_j(x)),
\end{equation}
where $f_j(g_j(x))\define\mathbb{E}\left[F_j(\mathbb{E}\left[G_j(x;\phi_j)\right];\zeta_j)\right]$ is the local objective of agent $j$. Under the assumption that  each agent only knows its own local objective function,   Gao and Huang  \cite{gao2021fast} propose  a distributed stochastic compositional gradient descent method  for problem (\ref{model}), which is named GP-DSCGD:
\begin{align}
\label{DSC-z}&z_{i,k+1}=(1-\gamma\beta_k)z_{i,k}+\gamma\beta_kG_i(x_{i,k};\phi_{i,k}),\\
\label{DSC-x1}&\tilde{x}_{i,k+1}=\sum_{j=1}^nw_{i,j}x_{j,k}-\eta\nabla G_i(x_{i,k};\phi_{i,k})\nabla F(z_{i,k+1};\zeta_{i,k}),\\
\label{DSC-x2}&x_{i,k+1}=x_{i,k}+\beta_k(\tilde{x}_{i,k+1}-x_{i,k}),
\end{align}
where parameters $\gamma>0,\beta_k>0$, $\eta>0$ are stepszie parameters and  $W=\{w_{ij}\}$ is a symmetric and doubly stochastic matrix.
Different from the  SCGD in \cite{wang2017stochastic},     there is an additional hyperparameter $\gamma$ in  computing $z_{i,k+1}$  (\ref{DSC-z}), which  is helpful to control  the estimation variance of $z_{i,k+1}$.  On the other hand,  (\ref{DSC-x2})
 can also be beneficial to control  the estimation variance of $z_{i,k+1}$.   When
the objective function
is smooth and the communication network is undirected,   the proposed method   achieves the optimal convergence rate  $\mathcal{O}\left(k^{-1/2}\right)$.   Moreover,
  a gradient-tracking version of GP-DSCGD, named GT-DSCGD, is also proposed in \cite{gao2021fast}, where  the local gradient $\nabla G_i(x_{i,k};\phi_{i,k})\nabla F(z_{i,k+1};\zeta_{i,k})$ in (\ref{DSC-x1}) is replaced with the global gradient tracker.
  As the two methods need the   increasing batch size $\mathcal{O}(\sqrt{k})$, the corresponding  sample complexity for  finding the ($\epsilon$)-stationary point is  $\mathcal{O}(\frac{1}{\epsilon^3})$.

In this paper,  we consider the distributed stochastic  problem (\ref{model}) over directed communication networks. We propose a  gradient-tracking based distributed stochastic method, which incorporates the SCSC method \cite{tianyi2021sol} into the AB/push-pull scheme \cite{Xin2018linear,pu2018push}. The collaboration of AB scheme and SCSC induces more complex estimate errors, for example, the estimate error of inner function values is involved in the  gradient tracking process of AB scheme, the  errors of tracked gradient affect the iterations and  then   the inner function values.
Therefore, the convergence analysis techniques of AB/push-pull scheme \cite{Xin2018linear,pu2018push} and SCSC are not applicable.  Moreover, the techniques used in \cite{gao2021fast}  are not applicable as the  induced  estimate error forms
non-martingale-difference.
As far as we are concerned, the contributions of the paper can be summarized as follows.
\begin{itemize}
	\item [$\bullet$] We propose a distributed stochastic optimization method for DSCO over directed communication networks. To the best of our knowledge,
it is the first one for distributed stochastic compositional optimization problem (\ref{model}) over directed communication networks.
	(i) For the nonconvex smooth objective,
 it achieves the same order of convergence rate
$\mathcal{O}\left(k^{-1/2}\right)$ as GP-DSCGD and GT-DSCGD in \cite{gao2021fast} under constant stepsize strategy.  However, the sample complexity  for  finding the ($\epsilon$)-stationary point is  $\mathcal{O}\left(\frac{1}{\epsilon^2}\right)$  as the proposed method does not require increasing batch size in each iteration.
(ii) For the strongly convex and smooth objective, we show that the square of the distance between the iterate and the optimal solution converges to zero with rate $\mathcal{O}\left(k^{-1}\right)$ under diminishing stepsize strategy, which is
  the optimal convergence rate   as   methods
for one-level unconstrained stochastic optimization \cite{Rakhlin2012making}.

\item [$\bullet$]
We present that Polyak-Ruppert averaged iterates of the proposed method converge
in distribution to a normal random vector for any agent.  Research
on asymptotic normality results for the SA based algorithm can be traced to the works in
the 1950s \cite{chung1954stochastic,fabian1968asymptotic}. To the best of our knowledge, our result is the first asymptotic normality result for the stochastic approximation based method of DSCO. On the other hand, it is a complement to the  asymptotic  normality   on the SAA scheme for stochastic compositional optimization \cite{Dentcheva2017}.
We verify our theoretical results using two numerical examples, model-agnostic  meta-learning problem and logistic regression problem.

\end{itemize}

The rest of this paper is organized as follows. Section 2 introduces the proposed method and some standard assumptions for DSCO, communication graphs and weighted matrices. Section 3 focuses on the convergence analysis of the proposed method. At last, Section 4 presents numerical results to validate the theoretic results.

Throughout this paper, we use the following notation. $\mathbb{R}^d$ denotes  the d-dimension Euclidean space endowed with norm $\|x\|=\sqrt{\langle x,x\rangle}$.  Denote $\mathbf{1}:=(1~1\dots1)^\intercal\in \mathbb{R}^{n}$, $\mathbf{0}\define(0~0\dots0)^\intercal\in \mathbb{R}^{d}$.
$\mathbf{I}_{d}\in\mathbb{R}^{d\times d}$ stands for the identity matrix.
$\mathbf{A}\otimes \mathbf{B}$ denotes the Kronecker product of matrix $\mathbf{A}$ and $\mathbf{B}$. For any positive sequences $\{a_k\}$ and $\{b_k\}$,  $a_k=\mathcal{O}(b_k)$ if there exists $c>0$ such that $a_k\le c b_k$. The communication relationship between agents is characterized by a directed graph $\mathcal{G}=\left(\mathcal{V},\mathcal{E}\right)$, where $\mathcal{V}=\{1,2,...,n\}$ is the node set and $\mathcal{E}\subseteq\mathcal{V}\times\mathcal{V}$ is the edge set. For any $i\in\mathcal{V}$, $P_{\phi_i}$ and $P_{\zeta_i}$ are distributions of random variables $\phi_i$ and $\zeta_i$ respectively.

\section{AB-DSCSC Method}\label{part:model}

In this section, we propose a gradient tracking based distributed stochastically corrected stochastic compositional  gradient method for DSCO  over directed communication networks.



\begin{algorithm}[h]
	\caption{AB/push-pull based Distributed Stochastically Corrected Stochastic Compositional
		Gradient (AB-DSCSC): }\label{alg:AB-DSCSC}
	\textbf{Require:}  initial values $x_{i,1}\in\mathbb{R}^{d}$,  $z_{i,1}\in\mathbb{R}^{p}$, $\phi_{i,1}\stackrel{iid}\sim P_{\phi_i}$, $\zeta_{i,1}\stackrel{iid}\sim P_{\zeta_i}$, $y_{i,1}=\nabla G_i(x_{i,1};\phi_{i,1})\nabla F_i(z_{i,1}; \zeta_{i,1})$ for any $i\in\mathcal{V}$; stepsizes $\alpha_k>0$, $\beta_k>0$; nonnegative weight matrices $\mathbf{A}=\{a_{ij}\}_{1\le i,j\le n}$ and $\mathbf{B}=\{b_{ij}\}_{1\le i,j\le n}$.
	\begin{itemize}
		\item[1:]\textbf{For} $k=1,2,\cdots$ \textbf{do}
		\item [2:]\textbf{State update:} for any $i\in\mathcal{V}$,
		{\small\begin{align}
		\label{alg:x}&x_{i,k+1}=\sum_{j=1}^n a_{ij}\left( x_{j,k}-\alpha_k y_{j,k}\right).
		\end{align}}
		\item [3:]\textbf{Inner function value tracking update:} for any $i\in\mathcal{V}$,
		draw $\phi_{i,k+1}^{'}\stackrel{iid}\sim P_{\phi_i}$ to compute
		{\small\begin{equation}\label{alg:z}
		z_{i,k+1}=(1-\beta_k)\left(z_{i,k}+G_i(x_{i,k+1};\phi_{i,k+1}^{'})-G_i(x_{i,k};\phi_{i,k+1}^{'})\right)+\beta_k G_i(x_{i,k+1};\phi_{i,k+1}^{'}).
		\end{equation}}
		\item[4:]\textbf{Gradient tracking update:} for any $i\in\mathcal{V}$,
		draw $\phi_{i,k+1}\stackrel{iid}\sim P_{\phi_i},~\zeta_{i,k+1}\stackrel{iid}\sim P_{\zeta_i}$ to compute
		{\small\begin{equation}\label{alg:y}
		y_{i,k+1}=\sum_{j=1}^n b_{ij} y_{j,k}+\nabla G_i(x_{i,k+1};\phi_{i,k+1})\nabla F_i(z_{i,k+1};\zeta_{i,k+1})-\nabla G_i(x_{i,k};\phi_{i,k})\nabla F_i(z_{i,k};\zeta_{i,k}).
		\end{equation}}
		\item[4:]\textbf{end for}
	\end{itemize}
\end{algorithm}

Throughout our analysis in the paper, we make the following
two assumptions on the objective function, communication graphs and weight matrices $\mathbf{A}$ and $\mathbf{B}$.


\begin{ass}\label{ass-objective} {\rm [\textbf{Objective function}]} {\rm
Let $C_g,C_f,V_g,L_g$ and $L_f$  be positive scalars. For $\forall i\in\mathcal{V}$, $\forall x,x^{'}\in\mathbb{R}^d$, $\forall y,y^{'}\in\mathbb{R}^p$,
\begin{itemize}
	\item[(a)]  functions $G_i(\cdot;\phi_i)$ and $F_i(\cdot;\zeta_i)$ are $L_g$ and $L_f$ smooth, that is,
	\begin{equation*}
	\|\nabla G_i(x;\phi_i)-\nabla G_i(x^{'};\phi_i)\|\le L_g\|x-x^{'}\|,
	\end{equation*}
	and
	\begin{equation*}
	\|\nabla F_i(y;\zeta_i)-\nabla F_i(y^{'};\zeta_i)\|\le L_f\|y-y^{'}\|;
	\end{equation*}
    \item [(b)]
	\begin{equation*}
	\mathbb{E}\left[\nabla G_i(x;\phi_{i,1})\nabla F_i(y;\zeta_{i,1})\right]=\nabla g_i(x)\nabla f_i(y),\quad \mathbb{E}\left[G_i(x;\phi_{i,1}^{'})\right]= g_i(x);
	\end{equation*}
	\item [(c)] the stochastic gradients of $f_i$ and $g_i$ are bounded in expectation, that is
	\begin{equation*}
	\mathbb{E}\left[\|\nabla G_i(x;\phi_i)\|^2\big|\zeta_i\right]\le C_g,\quad \mathbb{E}\left[ \|\nabla F_i(y;\zeta_i)\|^2\right]\le C_f;
	\end{equation*}
	\item [(d)] function $G(\cdot;\phi_i)$ has bounded variance, i.e., $\mathbb{E}\left[\|G_i(x;\phi_i)-g_i(x)\|^2\right]\le V_g$;
\end{itemize}}
\end{ass}
Assumption \ref{ass-objective} is standard assumption for stochastic compositional optimization problem \cite{tianyi2021sol,wang2017stochastic,gao2021fast}. Conditions  (b) and (d) in Assumption \ref{ass-objective} are
analogous to the unbiasedness and bounded variance assumptions for non-compositional stochastic optimization problems.

\begin{ass}\label{ass:matrix}  {\rm[\textbf{weight matrices and networks}]} {\rm
Let $\mathcal{G}_A=\left(\mathcal{V},\mathcal{E}_\A\right)$ and $\mathcal{G}_{B^\intercal}=\left(\mathcal{V},\mathcal{E}_{\Z^\intercal}\right)$ be subgraphs of $\mathcal{G}$ induced by  matrices $\mathbf{A}$ and $\mathbf{B}^\intercal$ respectively. 
	\begin{itemize}
		\item [(a)]The matrix $\mathbf{A}\in\mathbb{R}^{n\times n}$ is nonnegative row stochastic and $\mathbf{B}\in\mathbb{R}^{n\times n}$ is nonnegative column stochastic, i.e., $\mathbf{A}\mathbf{1}=\mathbf{1}$ and $\mathbf{1}^\intercal\mathbf{B}=\mathbf{1}^\intercal$. In addition, the diagonal entries of $\mathbf{A}$ and $\mathbf{B}$ are positive, i.e., $\mathbf{A}_{ii}>0$ and $\mathbf{B}_{ii}>0$ for all $i\in \mathcal{V}$.
		\item[(b)]  The graphs $\mathcal{G}_A$ and $\mathcal{G}_{B^\intercal}$ each contain at least one spanning tree. Moreover, there exists at least one node that is a root of spanning trees for both $\mathcal{G}_A$ and $\mathcal{G}_{B^\intercal}$, i.e. $\mathcal{R}_A\cap \mathcal{R}_{B^\intercal}\ne\emptyset$, where $\mathcal{R}_A$ ( $\mathcal{R}_{B^\intercal}$) is the set of roots of all possible spanning trees in the graph $\mathcal{G}_A$ ( $\mathcal{G}_{B^\intercal}$).
	\end{itemize}}
\end{ass}
It is worth noting that Assumption \ref{ass:matrix} (b) is weaker than requiring that both $\mathcal{G}_A$ and $\mathcal{G}_{B^\intercal}$
are strongly connected, which offers greater flexibility in the design of $\mathcal{G}_A$ and $\mathcal{G}_{B}$ \cite{pu2020push}.  Under  Assumption \ref{ass:matrix}, the matrix $\mathbf{A}$ has a nonnegative left eigenvector $\mathbf{u}^\intercal$ (w.r.t. eigenvalue 1) with $\mathbf{u}^\intercal\mathbf{1}=n$, and the matrix $\mathbf{B}$ has a nonnegative right eigenvector $\mathbf{v}$ (w.r.t. eigenvalue 1) with $\mathbf{1}^T\mathbf{v}=n$. Moreover, $\mathbf{u}^\intercal \mathbf{v}>0$.

For easy of presentation, we rewrite AB-DSCSC ((\ref{alg:x})-(\ref{alg:z})) in a compact form:
\begin{equation}\label{alg:new form}
\begin{aligned}
&\mathbf{x}_{k+1}=\tilde{\mathbf{A}}\left(\mathbf{x}_k-\alpha_k\mathbf{y}_k\right),\\
&\mathbf{z}_{k+1}=(1-\beta_k)\left(\mathbf{z}_k+\mathbf{G}_{k+1}^{(1)}-\mathbf{G}_{k+1}^{(2)}\right)+\beta_k \mathbf{G}_{k+1}^{(1)},\\
&\mathbf{y}_{k+1}=\tilde{\mathbf{B}}\mathbf{y}_k+\mathbf{H}_{k+1}-\mathbf{H}_k,
\end{aligned}
\end{equation}
where $\tilde{\mathbf{A}}\define\mathbf{A}\otimes \mathbf{I}_d,~ \tilde{\mathbf{B}}\define\mathbf{B}\otimes \mathbf{I}_d$, the vectors $\mathbf{x}_k$, $\mathbf{y}_k$, $\mathbf{z}_k$, $ \mathbf{G}_{k+1}^{(1)}$, $ \mathbf{G}_{k+1}^{(2)}$ and $\mathbf{H}_k$ concatenate all $x_{i,k}$'s, $y_{i,k}$'s, $z_{i,k}$'s, $G_i(x_{i,k+1},\phi_{i,k+1}^{'})$'s, $ G_i(x_{i,k},\phi_{i,k+1}^{'})$'s and $\nabla G_i(x_{i,k};\phi_{i,k})\nabla F_i(z_{i,k};\zeta_{i,k})$'s respectively.

\section{Convergence analysis}
In this section, we derive convergence rates of AB-DSCSC. We also investigate the asymptotic normality of AB-DSCSC when the objective function  is strongly convex.  We first present a technical lemma  which provides some norms for studying the consensus of AB-DSCSC.
\begin{lem}\label{lem:norm}
Under Assumption \ref{ass:matrix}, there exist vector norms, denoted as $\|\cdot\|_\A$, $\|\cdot\|_\Z$, on $ \mathbb{R}^{nd}$ such that the corresponding induced matrix norms 
 $\normm{\mathbf{W}}_\A\define\sup_{\mathbf{x}\neq0}\frac{\|\hat{\mathbf{W}}\mathbf{x}\|_\A}{\|\mathbf{x}\|_\A},\quad\normm{\mathbf{W}}_\Z\define\sup_{\mathbf{x}\neq0}\frac{\|\hat{\mathbf{W}}\mathbf{x}\|_\Z}{\|\mathbf{x}\|_\Z}$
 for $\mathbf{W}\in\mathbb{R}^{nd\times nd}$ satisfy:
\begin{equation}
\normm{\tilde{\mathbf{A}}-\frac{\mathbf{1}\mathbf{u}^\intercal}{n}\otimes \mathbf{I}_d}_\A<1,\quad \normm{\tilde{\mathbf{B}}-\frac{\mathbf{v}\mathbf{1}^\intercal}{n}\otimes \mathbf{I}_d}_\Z<1.
\end{equation}
Additionally,
let $\|\cdot\|_*$ and $\|\cdot\|_{**}$ be any two vector norms of $\|\cdot\|$, $\|\cdot\|_\A$ or $\|\cdot\|_\Z$. There exists a constant $\X>1$ such that
\begin{equation}\label{norm-bound-1}
\|\mathbf{x}\|_*\le \X \|\mathbf{x}\|_{**},\quad \forall \mathbf{x}\in\mathbb{R}^{nd}.
\end{equation}
\end{lem}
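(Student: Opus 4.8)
The plan is to reduce the two matrix-norm inequalities to a spectral-radius computation and then invoke the classical fact that a matrix with spectral radius strictly below $1$ admits a vector norm whose induced norm is also below $1$. Writing $\mathbf{P}_A:=\frac{\mathbf{1}\mathbf{u}^\intercal}{n}$ and $\mathbf{P}_B:=\frac{\mathbf{v}\mathbf{1}^\intercal}{n}$ and letting $\rho(\cdot)$ denote the spectral radius, I would first record the algebraic identities that follow from $\A\mathbf{1}=\mathbf{1}$, $\mathbf{u}^\intercal\A=\mathbf{u}^\intercal$, $\mathbf{u}^\intercal\mathbf{1}=n$ (and the symmetric relations for $\Z$, $\mathbf{v}$): namely $\mathbf{P}_A^2=\mathbf{P}_A$, $\A\mathbf{P}_A=\mathbf{P}_A\A=\mathbf{P}_A$, and likewise $\mathbf{P}_B^2=\mathbf{P}_B$, $\Z\mathbf{P}_B=\mathbf{P}_B\Z=\mathbf{P}_B$. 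Since each projection commutes with its matrix, $\mathbb{R}^n$ splits as $\mathrm{range}(\mathbf{P}_A)\oplus\mathrm{range}(\mathbf{I}_n-\mathbf{P}_A)$ into subspaces invariant under both $\A$ and $\mathbf{P}_A$; on the Perron direction $\mathrm{range}(\mathbf{P}_A)=\mathrm{span}(\mathbf{1})$ the operator $\A-\mathbf{P}_A$ acts as $0$, while on the complement $\mathbf{P}_A$ vanishes and $\A-\mathbf{P}_A$ acts as $\A$. Hence the spectrum of $\A-\mathbf{P}_A$ is $\{0\}$ together with the eigenvalues of $\A$ other than the Perron eigenvalue $1$, and the same holds for $\Z-\mathbf{P}_B$.

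The crux is then to show that all of those remaining eigenvalues lie strictly inside the unit disk, so that $\rho(\A-\mathbf{P}_A)<1$ and $\rho(\Z-\mathbf{P}_B)<1$. This is where Assumption~\ref{ass:matrix} enters: because $\mathcal{G}_A$ contains a spanning tree it possesses a globally reachable root, so the row-stochastic matrix $\A$ has a unique final communicating class reachable from every node, which forces the eigenvalue $1$ to be simple; the positivity $\A_{ii}>0$ makes that class aperiodic and thereby excludes any other eigenvalue of unit modulus. Applying the identical Perron--Frobenius argument to $\Z^\intercal$, whose induced graph $\mathcal{G}_{B^\intercal}$ is assumed to contain a spanning tree with $\Z_{ii}>0$, shows that $1$ is the simple, unique unit-modulus eigenvalue of $\Z$. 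I expect this step to be the main obstacle, since it is exactly the place where the weaker spanning-tree hypothesis (rather than strong connectivity) must be converted into a quantitative spectral gap; I would lean on the standard reducible Perron--Frobenius theory for such schemes \cite{pu2020push,pu2018push} rather than reprove it.

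With $\rho(\A-\mathbf{P}_A)<1$ in hand, I would lift to the Kronecker factors using $\tilde{\A}-\mathbf{P}_A\otimes\mathbf{I}_d=(\A-\mathbf{P}_A)\otimes\mathbf{I}_d$; since the spectrum of $\mathbf{M}\otimes\mathbf{I}_d$ equals that of $\mathbf{M}$ (each eigenvalue repeated $d$ times), we retain $\rho\big(\tilde{\A}-\frac{\mathbf{1}\mathbf{u}^\intercal}{n}\otimes\mathbf{I}_d\big)<1$ and, symmetrically, $\rho\big(\tilde{\Z}-\frac{\mathbf{v}\mathbf{1}^\intercal}{n}\otimes\mathbf{I}_d\big)<1$. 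I then invoke the classical result that for any square matrix $\mathbf{M}$ and any $\epsilon>0$ there exists a vector norm whose induced norm satisfies $\normm{\mathbf{M}}\le\rho(\mathbf{M})+\epsilon$; choosing $\epsilon=\tfrac12\big(1-\rho(\tilde{\A}-\mathbf{P}_A\otimes\mathbf{I}_d)\big)$ yields the norm $\|\cdot\|_\A$ with $\normm{\tilde{\A}-\frac{\mathbf{1}\mathbf{u}^\intercal}{n}\otimes\mathbf{I}_d}_\A<1$, and the analogous choice produces $\|\cdot\|_\Z$ with the second inequality.

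The final claim is purely soft. On the finite-dimensional space $\mathbb{R}^{nd}$ all norms are equivalent, so for each of the finitely many ordered pairs drawn from $\{\|\cdot\|,\|\cdot\|_\A,\|\cdot\|_\Z\}$ there is a comparison constant; taking $\X$ to be the maximum of these finitely many constants together with $2$ guarantees a single $\X>1$ satisfying $\|\mathbf{x}\|_*\le\X\|\mathbf{x}\|_{**}$ for every admissible pair and all $\mathbf{x}\in\mathbb{R}^{nd}$.
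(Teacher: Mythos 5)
Your proposal is correct, and it reaches the lemma by a genuinely more self-contained route than the paper. The paper's own proof outsources the entire spectral step to a citation: it invokes \cite[Lemma 3]{Song2021CompressedGT} to obtain an invertible matrix $\mathbf{A}_*$ with $\normm{\mathbf{A}_*\left(\mathbf{A}-\frac{\mathbf{1}\mathbf{u}^\intercal}{n}\right)\mathbf{A}_*^{-1}}<1$, defines $\|\mathbf{x}\|_\A\define\|(\mathbf{A}_*\otimes\mathbf{I}_d)\mathbf{x}\|$, and then uses Kronecker algebra (namely $(\mathbf{W}_1\otimes\mathbf{W}_2)^{-1}=\mathbf{W}_1^{-1}\otimes\mathbf{W}_2^{-1}$ and $\normm{\mathbf{M}\otimes\mathbf{I}_d}=\normm{\mathbf{M}}$) to lift the \emph{norm} from dimension $n$ to dimension $nd$; the same is repeated for $\mathbf{B}$. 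You lift the \emph{spectrum} instead: you show the spectral radius of $\mathbf{A}-\frac{\mathbf{1}\mathbf{u}^\intercal}{n}$ is strictly below $1$ from first principles (projection identities, invariant-subspace splitting, reducible Perron--Frobenius theory: a spanning tree forces a unique closed class and hence a simple eigenvalue $1$, and positive diagonals give aperiodicity and hence exclude other unit-modulus eigenvalues), observe that tensoring with $\mathbf{I}_d$ preserves the spectrum, and only then apply the classical ``induced norm $\le\rho(\cdot)+\epsilon$'' construction directly at the $nd\times nd$ level. The underlying mathematics is the same --- the cited lemma of Song et al.\ is itself proved by exactly that classical construction --- but your version makes explicit where Assumption \ref{ass:matrix}(b) actually enters (it supplies the spectral gap), which the paper leaves hidden inside the citation, while the paper's version is shorter and yields the norms in the concrete weighted-Euclidean form $\|\hat{\mathbf{A}}\mathbf{x}\|$ (a form that, as it happens, is never needed downstream). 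One point to be careful about if you write yours out in full: identifying the spectrum of $\mathbf{A}-\frac{\mathbf{1}\mathbf{u}^\intercal}{n}$ as $\{0\}$ together with the non-Perron eigenvalues of $\mathbf{A}$ is legitimate only after you know the eigenvalue $1$ is algebraically simple, since otherwise the restriction of $\mathbf{A}$ to $\ker\left(\frac{\mathbf{1}\mathbf{u}^\intercal}{n}\right)$ would retain an eigenvalue $1$; you do flag this, and it is precisely what the Perron--Frobenius step supplies, so the logic closes, but the two steps must be taken in that order. The final equivalence-of-norms claim is handled identically in both arguments.
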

\begin{proof}
 Under Assumption \ref{ass:matrix}, the conditions of \cite[Lemma 3]{Song2021CompressedGT} hold and then there exists an invertible matrix $\mathbf{A}_*\in \mathbb{R}^{d\times d}$ such that
\begin{equation*}
\normm{\mathbf{A}-\frac{\mathbf{1}\mathbf{u}^\intercal}{n}}_*=\normm{\A_*\left(\mathbf{A}-\frac{\mathbf{1}\mathbf{u}^\intercal}{n}\right)\A_*^{-1}}<1,
\end{equation*}
where $\normm{\cdot}_*$ and $\normm{\cdot}$ are  matrix norms induced by vector norms $\|x\|_*\define\|\mathbf{A}_*x\|$ and 2-norm respectively. Let $\hat{\mathbf{A}}=\mathbf{A}_*\otimes \mathbf{I}_d$. Noting that $\left(\mathbf{W}_1\otimes\mathbf{W}_2\right)^{-1}=\mathbf{W}_1^{-1}\otimes \mathbf{W}_2^{-1}$ for any invertible matrices $\mathbf{W}_1,\mathbf{W}_2\in \mathbb{R}^{nd\times nd}$, $\hat{\mathbf{A}}^{-1}=\mathbf{A}_*^{-1}\otimes \mathbf{I}_d$. Therefore, vector matrix $\|\mathbf{x}\|_{\mathbf{A}}\define\|\hat{\mathbf{A}}\mathbf{x}\|$ is well defined and the corresponding induced matrix norm $\normm{\cdot}_{\mathbf{A}}$ satisfies
\begin{align*}
\normm{\tilde{\mathbf{A}}-\frac{\mathbf{1}\mathbf{u}^\intercal}{n}\otimes \mathbf{I}_d}_{\mathbf{A}}&=\normm{\hat{\mathbf{A}}\left(\tilde{\mathbf{A}}-\frac{\mathbf{1}\mathbf{u}^\intercal}{n}\otimes \mathbf{I}_d\right)\hat{\mathbf{A}}^{-1}}\\
&=\normm{\left[\A_*\left(\mathbf{A}-\frac{\mathbf{1}\mathbf{u}^\intercal}{n}\right)\A_*^{-1}\right]\otimes \mathbf{I}_d}\\
&=\normm{\A_*\left(\mathbf{A}-\frac{\mathbf{1}\mathbf{u}^\intercal}{n}\right)\A_*^{-1}}<1.
\end{align*}
By the similar analysis, there exists $\hat{\mathbf{B}}$ such that
\begin{align*}
\normm{\tilde{\mathbf{B}}-\frac{\mathbf{v}\mathbf{1}^\intercal}{n}\otimes \mathbf{I}_d}_{\mathbf{B}}&=\normm{\hat{\mathbf{B}}\left(\tilde{\mathbf{B}}-\frac{\mathbf{v}\mathbf{1}^\intercal}{n}\otimes \mathbf{I}_d\right)\hat{\mathbf{B}}^{-1}}<1.
\end{align*}

The inequality (\ref{norm-bound-1}) follows from the equivalence relation of all norms on $\mathbb{R}^d$. The proof is complete.
\end{proof}

The next lemma studies the asymptotic consensus of AB-DSCSC.
\begin{lem}\label{lem:rate}
Suppose Assumptions \ref{ass-objective}-\ref{ass:matrix} hold. Stepsize $\alpha_k$ is nonincreasing and $\lim_{k\rightarrow\infty}\frac{\alpha_k}{\alpha_{k+1}}=1$. Define auxiliary sequence
$\{\mathbf{y}_k^{'}\}$ as
\begin{equation}\label{g-tra}
\begin{aligned}
&\mathbf{y}_{k+1}^{'}=\tilde{\mathbf{B}}\mathbf{y}_k^{'}+\mathbf{J}_{k+1}-\mathbf{J}_k,
\end{aligned}
\end{equation}
where vectors $\mathbf{J}_k$ and $\mathbf{y}_{1}^{'}$ concatenate all $\nabla g_i(x_{i,k})\nabla f_i(z_{i,k})$'s and $\nabla g_i(x_{i,1})\nabla f_i(z_{i,1})$'s respectively.
Then
\begin{align}
&\mathbb{E}\left[\|\mathbf{x}_{k+1}-\mathbf{1}\otimes\bar{x}_{k+1}\|_\A^2\right]+c_4\mathbb{E}\left[\|\mathbf{y}_{k+1}^{'}-\mathbf{v}\otimes\bar{y}_{k+1}^{'}\|_\Z^2\right]\notag\\
\label{ine-0}&\le\rho^{k} \left(\mathbb{E}\left[\left\|\mathbf{x}_1-\mathbf{1}\otimes \bar{x}_1\right\|_\A^2\right]+c_4\mathbb{E}\left[\left\|\mathbf{y}_{1}^{'}-\mathbf{v}\otimes \bar{y}_{1}^{'}\right\|_\Z^2\right]\right)+(c_1+c_3c_4)\sum_{t=1}^k\rho^{k-t}\alpha_t^2,
\end{align}
where $\bar{x}_k\define \left(\frac{\mathbf{u}^\intercal}{n}\otimes\mathbf{I}_{d}\right)\mathbf{x}_k$, $\bar{y}_k^{'}\define\left(\frac{\mathbf{1}^\intercal}{n}\otimes\mathbf{I}_{d}\right)\mathbf{y}_k^{'}$ and $\rho=\max\left\{\frac{1+\tau_\mathbf{B}^2}{2},~\frac{3+\tau_\mathbf{A}^2}{4}\right\}$, $c_b=\max\left\{\X,\frac{\normm{\mathbf{B}-\mathbf{I}_{n}}_\Z}{\tau_\Z}\X\right\}$,
\begin{align}
&c_1=\frac{1+\tau_\mathbf{A}^2}{1-\tau_\mathbf{A}^2}\normm{\mathbf{I}_{n}-\frac{\mathbf{1}\mathbf{u}^\intercal}{n}}_\A^2\X^2\frac{c_b^2nC_gC_f}{(1-\tau_\Z)^2}\notag\\
&c_2=8\frac{1+\tau_\mathbf{B}^2}{1-\tau_\mathbf{B}^2}\normm{\mathbf{I}_{nd}-\frac{\mathbf{v}\mathbf{1}^\intercal}{n}\otimes\mathbf{I}_{d}}_\Z^2\X^4\left(C_fL_g^2+C_gL_f^2\right)\normm{\tilde{\mathbf{A}}-\mathbf{I}_{nd}}^2,\notag\\
&c_3=8\frac{1+\tau_\mathbf{B}^2}{1-\tau_\mathbf{B}^2}\normm{\mathbf{I}_{nd}-\frac{\mathbf{v}\mathbf{1}^\intercal}{n}\otimes\mathbf{I}_{d}}_\Z^2\X^2\left(C_fL_g^2+C_gL_f^2\right) \frac{c_b^2nC_gC_f}{(1-\tau_\Z)^2},\quad c_4=\frac{1-\tau_\mathbf{A}^2}{4c_2},\notag\\
\label{consensus-para}&\tau_\mathbf{A}\define\normm{\tilde{\mathbf{A}}-\frac{\mathbf{1}\mathbf{u}^\intercal}{n}\otimes \mathbf{I}_d}_\mathbf{A},\quad\tau_\mathbf{B}\define\normm{\tilde{\mathbf{B}}-\frac{\mathbf{v}\mathbf{1}^\intercal}{n}\otimes \mathbf{I}_d}_\mathbf{B},
\end{align}
$\mathbf{u}$ and $\mathbf{v}$ are the left eigenvector of $\A$ and the right eigenvector of $\Z$ respectively, vector norms $\|\cdot\|_\A$, $\|\cdot\|_\Z$ and matrix norms $\normm{\cdot}_\A$, $\normm{\cdot}_\Z$ are introduced in Lemma \ref{lem:norm}.
\end{lem}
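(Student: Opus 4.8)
The plan is to derive two coupled one-step inequalities — one for the state consensus error measured in $\|\cdot\|_\A$ and one for the auxiliary gradient-tracker consensus error measured in $\|\cdot\|_\Z$ — and then fuse them into a single Lyapunov recursion that contracts at rate $\rho$. First I would rewrite the state error, using $\mathbf{A}\mathbf{1}=\mathbf{1}$, $\mathbf{u}^\intercal\mathbf{A}=\mathbf{u}^\intercal$ and $\mathbf{u}^\intercal\mathbf{1}=n$ (so that $\bar{x}_{k+1}=\bar{x}_k-\alpha_k(\frac{\mathbf{u}^\intercal}{n}\otimes\mathbf{I}_d)\mathbf{y}_k$ and the operator below annihilates $\mathbf{1}\otimes\bar{x}_k$), as
\[
\mathbf{x}_{k+1}-\mathbf{1}\otimes\bar{x}_{k+1}=\left(\tilde{\mathbf{A}}-\tfrac{\mathbf{1}\mathbf{u}^\intercal}{n}\otimes\mathbf{I}_d\right)\left(\mathbf{x}_k-\mathbf{1}\otimes\bar{x}_k-\alpha_k\mathbf{y}_k\right).
\]
Taking $\|\cdot\|_\A$, invoking the contraction factor $\tau_\A<1$ from Lemma \ref{lem:norm}, and applying Young's inequality with a parameter tuned to give the rate $\frac{1+\tau_\A^2}{2}$ yields $\mathbb{E}[\|\mathbf{x}_{k+1}-\mathbf{1}\otimes\bar{x}_{k+1}\|_\A^2]\le\frac{1+\tau_\A^2}{2}\mathbb{E}[\|\mathbf{x}_{k}-\mathbf{1}\otimes\bar{x}_{k}\|_\A^2]+c_1\alpha_k^2$. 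The constant $c_1$ comes from a uniform bound on $\mathbb{E}\|\mathbf{y}_k\|^2$: since the stochastic gradients are bounded in expectation (Assumption \ref{ass-objective}(c)), unrolling $\mathbf{y}_{k+1}=\tilde{\mathbf{B}}\mathbf{y}_k+\mathbf{H}_{k+1}-\mathbf{H}_k$ against the $\tau_\Z$-contraction and converting norms via \eqref{norm-bound-1} produces the geometric-sum factor $\frac{c_b^2 nC_gC_f}{(1-\tau_\Z)^2}$.

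Next I would treat the auxiliary tracker. Because $\mathbf{1}^\intercal\mathbf{B}=\mathbf{1}^\intercal$ and $\mathbf{B}\mathbf{v}=\mathbf{v}$ with $\mathbf{1}^\intercal\mathbf{v}=n$, the same algebra gives
\[
\mathbf{y}'_{k+1}-\mathbf{v}\otimes\bar{y}'_{k+1}=\left(\tilde{\mathbf{B}}-\tfrac{\mathbf{v}\mathbf{1}^\intercal}{n}\otimes\mathbf{I}_d\right)\left(\mathbf{y}'_k-\mathbf{v}\otimes\bar{y}'_k\right)+\left(\mathbf{I}_{nd}-\tfrac{\mathbf{v}\mathbf{1}^\intercal}{n}\otimes\mathbf{I}_d\right)\left(\mathbf{J}_{k+1}-\mathbf{J}_k\right),
\]
and $\|\cdot\|_\Z$ with Young's inequality produces a $\frac{1+\tau_\Z^2}{2}$-contraction plus the coefficient $\frac{1+\tau_\Z^2}{1-\tau_\Z^2}\normm{\mathbf{I}_{nd}-\frac{\mathbf{v}\mathbf{1}^\intercal}{n}\otimes\mathbf{I}_d}_\Z^2$ times $\mathbb{E}\|\mathbf{J}_{k+1}-\mathbf{J}_k\|_\Z^2$. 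The crucial estimate bounds $\mathbb{E}\|\mathbf{J}_{k+1}-\mathbf{J}_k\|^2$ through the smoothness and boundedness of Assumption \ref{ass-objective}(a),(c) by a constant multiple of $\mathbb{E}\|\mathbf{x}_{k+1}-\mathbf{x}_k\|^2$ with factor $C_fL_g^2+C_gL_f^2$ (the inner-value increment $\|\mathbf{z}_{k+1}-\mathbf{z}_k\|$ being controlled by $\|\mathbf{x}_{k+1}-\mathbf{x}_k\|$ up to stepsize-order terms through the SCSC update \eqref{alg:z}). Decomposing $\mathbf{x}_{k+1}-\mathbf{x}_k=(\tilde{\mathbf{A}}-\mathbf{I}_{nd})(\mathbf{x}_k-\mathbf{1}\otimes\bar{x}_k)-\alpha_k\tilde{\mathbf{A}}\mathbf{y}_k$ then splits this into a consensus part (yielding the factor $\normm{\tilde{\mathbf{A}}-\mathbf{I}_{nd}}^2$ of $c_2$) and a stepsize part $\alpha_k^2\|\mathbf{y}_k\|^2$ (yielding $c_3$ via the same $\frac{c_b^2 nC_gC_f}{(1-\tau_\Z)^2}$ bound), so that $\mathbb{E}[\|\mathbf{y}'_{k+1}-\mathbf{v}\otimes\bar{y}'_{k+1}\|_\Z^2]\le\frac{1+\tau_\Z^2}{2}\mathbb{E}[\|\mathbf{y}'_{k}-\mathbf{v}\otimes\bar{y}'_{k}\|_\Z^2]+c_2\mathbb{E}[\|\mathbf{x}_{k}-\mathbf{1}\otimes\bar{x}_{k}\|_\A^2]+c_3\alpha_k^2$.

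Finally I would add $c_4$ times the tracker inequality to the state inequality. With $c_4=\frac{1-\tau_\A^2}{4c_2}$ the cross term $c_4c_2\mathbb{E}[\|\mathbf{x}_k-\mathbf{1}\otimes\bar{x}_k\|_\A^2]$ is absorbed into the state decay, raising its coefficient to $\frac{1+\tau_\A^2}{2}+\frac{1-\tau_\A^2}{4}=\frac{3+\tau_\A^2}{4}$ while the tracker coefficient remains $\frac{1+\tau_\Z^2}{2}$; bounding both by $\rho=\max\{\frac{3+\tau_\A^2}{4},\frac{1+\tau_\Z^2}{2}\}$ gives a single scalar recursion $V_{k+1}\le\rho V_k+(c_1+c_3c_4)\alpha_k^2$ for $V_k:=\mathbb{E}[\|\mathbf{x}_k-\mathbf{1}\otimes\bar{x}_k\|_\A^2]+c_4\mathbb{E}[\|\mathbf{y}'_k-\mathbf{v}\otimes\bar{y}'_k\|_\Z^2]$, which unrolls to exactly \eqref{ine-0}. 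The main obstacle is the two-way coupling: the state error is driven by the stochastic tracker magnitude while the auxiliary tracker error is driven by the state increment, so one must simultaneously (i) obtain a uniform bound on $\mathbb{E}\|\mathbf{y}_k\|^2$ from the bounded-gradient assumption even though $\mathbf{y}_k$ is only defined recursively, and (ii) pick the weight $c_4$ precisely so the resulting $2\times2$ gain is contractive with the stated $\rho<1$. Carefully tracking the norm-equivalence constant $\X$ and the distinction between the true tracker $\mathbf{y}_k$ and the deterministic auxiliary $\mathbf{y}'_k$ throughout is the delicate part of the bookkeeping.
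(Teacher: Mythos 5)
Your proposal follows essentially the same route as the paper's proof: the identical decomposition of both consensus errors into a contraction part plus a perturbation, Young's inequality tuned to give the factors $\frac{1+\tau_{\mathbf{A}}^2}{2}$ and $\frac{1+\tau_{\mathbf{B}}^2}{2}$, the same uniform bound $\mathbb{E}\left[\|\mathbf{y}_k\|^2\right]\le \frac{c_b^2 n C_g C_f}{(1-\tau_{\mathbf{B}})^2}$ obtained by unrolling the gradient tracker against the $\tau_{\mathbf{B}}$-contraction, and the same Lyapunov weighting $c_4=\frac{1-\tau_{\mathbf{A}}^2}{4c_2}$ that turns the coupled pair into a single recursion with rate $\rho$. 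If anything, your parenthetical remark that the increment $\|\mathbf{z}_{k+1}-\mathbf{z}_k\|$ entering $\mathbb{E}\left[\|\mathbf{J}_{k+1}-\mathbf{J}_k\|^2\right]$ is only controlled by $\|\mathbf{x}_{k+1}-\mathbf{x}_k\|$ up to stepsize-order terms is more careful than the paper itself, which silently bounds that difference via Lipschitz continuity in the $x$-variable alone.
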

\begin{proof}
We first provide the upper bound of consensus error $\mathbf{x}_{k+1}-\mathbf{1}\otimes\bar{x}_{k+1}$ in the mean square sense. Note that for  any random vectors $\theta$, $\theta^{'}$ and positive scalar $\tau$, 	
\begin{equation}\label{general-form}
\begin{aligned}
\mathbb{E}\left[\left\|\theta+\theta^{'}\right\|_*^2\right]
\le (1+\tau)\mathbb{E}\left[\|\theta\|_*^2\right]+\left(1+\frac{1}{\tau}\right)\mathbb{E}\left[\left\|\theta^{'}\right\|_*^2\right],
\end{aligned}
\end{equation}
where the norm $\|\cdot\|_*$ may be $\|\cdot\|_\A$ or $\|\cdot\|_\Z$.	
Choosing
\begin{align*}
\theta=\left(\tilde{\mathbf{A}}-\frac{\mathbf{1}\mathbf{u}^\intercal}{n}\otimes\mathbf{I}_{d}\right)\left(\mathbf{x}_k-\mathbf{1}\otimes\bar{x}_{k}\right),\quad
\theta^{'}=-\alpha_k\left(\tilde{\mathbf{A}}-\frac{\mathbf{1}\mathbf{u}^\intercal}{n}\otimes\mathbf{I}_{d}\right)\mathbf{y}_k,
\end{align*}
we have $\mathbf{x}_{k+1}-\mathbf{1}\otimes\bar{x}_{k+1}=\theta+\theta^{'}$
and
\begin{align}
&\mathbb{E}\left[\|\mathbf{x}_{k+1}-\mathbf{1}\otimes\bar{x}_{k+1}\|_\A^2\right]\notag\\
&\le  (1+\tau)\mathbb{E}\left[\left\|\left(\tilde{\mathbf{A}}-\frac{\mathbf{1}\mathbf{u}^\intercal}{n}\otimes\mathbf{I}_{d}\right)\left(\mathbf{x}_k-\mathbf{1}\otimes\bar{x}_{k}\right)\right\|_\A^2\right]\notag
+\left(1+\frac{1}{\tau}\right)\mathbb{E}\left[\left\|\alpha_{k}\left(\tilde{\mathbf{A}}-\frac{\mathbf{1}\mathbf{u}^\intercal}{n}\otimes\mathbf{I}_{d}\right)\mathbf{y}_k\right\|_\A^2\right]\notag\\
&\le\frac{1+\tau_\mathbf{A}^2}{2}\mathbb{E}\left[\left\|\mathbf{x}_k-\mathbf{1}\otimes\bar{x}_{k}\right\|_\A^2\right] +\alpha_{k}^2\frac{1+\tau_\mathbf{A}^2}{1-\tau_\mathbf{A}^2}\normm{\mathbf{A}-\frac{\mathbf{1}\mathbf{u}^\intercal}{n}}_\A^2\X^2\mathbb{E}\left[\left\|\mathbf{y}_k\right\|^2\right],\label{x-bar}
\end{align}
where $\tau_\A$ is defined in (\ref{consensus-para}), $\tau=(1-\tau_\A^2)/(2\tau_\A^2)$ and the last inequality follows from the fact (\ref{norm-bound-1}). By the definition of $\mathbf{y}_k$ in (\ref{alg:new form}),
{\small\begin{equation*}
\begin{aligned}
\mathbb{E}\left[\left\|\mathbf{y}_k\right\|^2\right]
=\mathbb{E}\left[\left\|\sum_{t=1}^{k-1}\tilde{\mathbf{B}}^{k-1-t}(\tilde{\mathbf{B}}-\mathbf{I}_{nd})\mathbf{H}_t+\mathbf{H}_{k}\right\|^2\right]
\le \sum_{t_1=1}^{k}\sum_{t_2=1}^{k}\normm{\tilde{\mathbf{B}}(k,t_1)}\normm{\tilde{\mathbf{B}}(k,t_2)}\mathbb{E}\left[\|\mathbf{H}_{t_1}\|\|\mathbf{H}_{t_2}\|\right],
\end{aligned}
\end{equation*}}		
where
\begin{equation}\label{mat-B}
\begin{aligned}
&\tilde{\mathbf{B}}(k,t)\define \tilde{\mathbf{B}}^{k-1-t}(\tilde{\mathbf{B}}-\mathbf{I}_{nd})~(t\le k-1),\quad\tilde{\mathbf{B}}(k,k)\define\mathbf{I}_{nd}.
\end{aligned}
\end{equation}
Obviously, $\normm{\tilde{\mathbf{B}}(k,k)}=1$, $\normm{\tilde{\mathbf{B}}(k,k-1)}\le \X\normm{\tilde{\mathbf{B}}-\mathbf{I}_{nd}}_\Z$ and for $t<k-1$,
\begin{equation*}
\begin{aligned}
\normm{\tilde{\mathbf{B}}(k,t)}\le \X\normm{\tilde{\mathbf{B}}^{k-1-t}\left(\tilde{\mathbf{B}}-\mathbf{I}_{nd}\right)}_\Z&=\X\normm{\left(\tilde{\mathbf{B}}-\frac{\mathbf{v}\mathbf{1}^\intercal}{n}\otimes \mathbf{I}_d\right)\tilde{\mathbf{B}}^{k-2-t}\left(\tilde{\mathbf{B}}-\mathbf{I}_{n}\right)}_\Z\\
&\le\X\tau_\Z \normm{\tilde{\mathbf{B}}^{k-2-t}\left(\tilde{\mathbf{B}}-\mathbf{I}_{nd}\right)}_\Z\\
&\le\cdots\le \X\tau_\Z^{k-1-t} \normm{\tilde{\mathbf{B}}-\mathbf{I}_{nd}}_\Z.
\end{aligned}
\end{equation*}
Denoting $c_b=\max\left\{\X,\frac{\normm{\mathbf{B}-\mathbf{I}_{n}}_\Z}{\tau_\Z}\X\right\}$, we have
\begin{equation}\label{B-bound}
\normm{\tilde{\mathbf{B}}(k,t)}\le c_b\tau_\Z^{k-t}
\end{equation}
 and
\begin{align}
\mathbb{E}\left[\left\|\mathbf{y}_k\right\|^2\right]
&\le c_b^2\sum_{t_1=1}^{k}\sum_{t_2=1}^{k}\tau_\Z^{2k-t_1-t_2}\mathbb{E}\left[\|\mathbf{H}_{t_1}\|\|\mathbf{H}_{t_2}\|\right]\notag\\
&\le c_b^2\sum_{t_1=1}^{k}\sum_{t_2=1}^{k}\tau_\Z^{2k-t_1-t_2}\frac{\mathbb{E}\left[\|\mathbf{H}_{t_1}\|^2\right]+\mathbb{E}\left[\|\mathbf{H}_{t_2}\|^2\right]}{2}\notag\\
&\le c_b^2\sum_{t_1=1}^{k}\sum_{t_2=1}^{k}\tau_\Z^{2k-t_1-t_2} nC_gC_f\le \frac{c_b^2nC_gC_f}{(1-\tau_\Z)^2},\label{y-bound}
\end{align}	
where  the third inequality follows from Assumption \ref{ass-objective} (c). Substitute (\ref{y-bound}) into (\ref{x-bar}),
\begin{equation}\label{ine-1}
\mathbb{E}\left[\|\mathbf{x}_{k+1}-\mathbf{1}\otimes\bar{x}_{k+1}\|_\A^2\right]
\le\frac{1+\tau_\mathbf{A}^2}{2}\mathbb{E}\left[\left\|\mathbf{x}_k-\mathbf{1}\otimes\bar{x}_{k}\right\|_\A^2\right]+c_1\alpha_{k}^2,
\end{equation}
where $c_1=\frac{1+\tau_\mathbf{A}^2}{1-\tau_\mathbf{A}^2}\normm{\mathbf{A}-\frac{\mathbf{1}\mathbf{u}^\intercal}{n}}_\A^2\X^2\frac{c_b^2nC_gC_f}{(1-\tau_\Z)^2}$.

Next, we estimate the upper bound of consensus error $\|\mathbf{y}_k^{'}-\mathbf{v}\otimes\bar{y}_{k}^{'}\|^2$ in the mean sense. Set
\begin{align*}
\theta=\left(\tilde{\mathbf{B}}-\frac{\mathbf{v}\mathbf{1}^\intercal}{n}\otimes\mathbf{I}_{d}\right)\left(\mathbf{y}_k^{'}-\mathbf{v}\otimes \bar{y}_{k}^{'}\right),\quad
\theta^{'}=\left(\mathbf{I}_{nd}-\frac{\mathbf{v}\mathbf{1}^\intercal}{n}\otimes\mathbf{I}_{d}\right)\left(\mathbf{J}_{k+1}-\mathbf{J}_k\right)
\end{align*}
in (\ref{general-form}).  By the definitions of $\mathbf{y}_{k+1}^{'}$ and $\bar{y}_{k+1}^{'}$, we have $\mathbf{y}_{k+1}^{'}-\mathbf{1}\otimes\bar{y}_{k+1}^{'}=\theta+\theta^{'}$ and
{\small\begin{align}
&\mathbb{E}\left[\|\mathbf{y}_{k+1}^{'}-\mathbf{v}\otimes\bar{y}_{k+1}^{'}\|_\Z^2\right]\notag\\
&\le(1+\tau)\mathbb{E}\left[\left\|\left(\tilde{\mathbf{B}}-\frac{\mathbf{v}\mathbf{1}^\intercal}{n}\otimes\mathbf{I}_{d}\right)\left(\mathbf{y}_k^{'}-\mathbf{v}\otimes\bar{y}_{k}^{'}\right)\right\|_\Z^2\right]+\left(1+\frac{1}{\tau}\right)\mathbb{E}\left[\left\|\left(\mathbf{I}_{nd}-\frac{\mathbf{v}\mathbf{1}^\intercal}{n}\otimes\mathbf{I}_{d}\right)\left(\mathbf{J}_{k+1}-\mathbf{J}_k\right)\right\|_\Z^2\right]\notag\\
\label{consensus-3}&\le
\frac{1+\tau_\mathbf{B}^2}{2}\mathbb{E}\left[\left\|\mathbf{y}_k^{'}-\mathbf{v}\otimes \bar{y}_{k}^{'}\right\|_\Z^2\right]+2\frac{1+\tau_\mathbf{B}^2}{1-\tau_\mathbf{B}^2}\normm{\mathbf{I}_{nd}-\frac{\mathbf{v}\mathbf{1}^\intercal}{n}\otimes\mathbf{I}_{d}}_\Z^2\X^2\mathbb{E}\left[\left\|\mathbf{J}_{k+1}-\mathbf{J}_k\right\|^2\right],
\end{align}}
where $\tau_\Z$ is defined in (\ref{consensus-para}), the second inequality follows from the setting $\tau=(1-\tau_\Z^2)/(2\tau_\Z^2)$ and (\ref{norm-bound-1}). For the term $\mathbb{E}\left[\left\|\mathbf{J}_{k+1}-\mathbf{J}_k\right\|^2\right]$,
\begin{align}
\mathbb{E}\left[\left\|\mathbf{J}_{k+1}-\mathbf{J}_k\right\|^2\right]
&=\mathbb{E}\left[\left\|\left(\nabla \mathbf{g}_{k+1}-\nabla \mathbf{g}_k\right)\nabla \mathbf{f}_{k+1}+\nabla \mathbf{g}_k\left(\nabla \mathbf{f}_{k+1}-\nabla \mathbf{f}_k\right)\right\|^2\right]\notag\\
&\le 2\left(C_fL_g^2+C_gL_f^2\right)\mathbb{E}\left[\left\|\mathbf{x}_{k+1}-\mathbf{x}_k\right\|^2\right]\notag\\
&=2\left(C_fL_g^2+C_gL_f^2\right) \mathbb{E}\left[\left\|\left(\tilde{\mathbf{A}}-\mathbf{I}_{nd}\right)\left(\mathbf{x}_k-\mathbf{1}\otimes \bar{x}_k\right)-\alpha_k\tilde{\mathbf{A}}\mathbf{y}_k\right\|^2\right]\notag\\
&\le 4\left(C_fL_g^2+C_gL_f^2\right)\normm{\tilde{\mathbf{A}}-\mathbf{I}_{nd}}^2 \X^2\mathbb{E}\left[ \left\|\mathbf{x}_k-\mathbf{1}\otimes \bar{x}_k\right\|_\A^2\right]\notag\\
\label{F-error}&\quad+4\left(C_fL_g^2+C_gL_f^2\right) ^2\alpha_k^2\mathbb{E}\left[\left\|\mathbf{y}_k\right\|^2\right],
\end{align}	
where $\nabla \mathbf{g}_k=\left[\nabla g_1(x_{1,k})^\intercal,\cdots, \nabla g_n(x_{n,k})^\intercal\right]^\intercal$ and $\nabla \mathbf{f}_k=\left[\nabla f_1(x_{1,k})^\intercal,\cdots, \nabla f_n(x_{n,k})^\intercal\right]^\intercal$, the first inequality follows from Assumption \ref{ass-objective} (a) and (c), the second equality follows from the fact $\left(\tilde{\mathbf{A}}-\mathbf{I}_{nd}\right)(\mathbf{1}\otimes \bar{x}_k)=\mathbf{0}$ as $\mathbf{A}$ is a row stochastic matrix. Substitute (\ref{y-bound}) and (\ref{F-error}) into (\ref{consensus-3}),
\begin{align}
&\mathbb{E}\left[\|\mathbf{y}_{k+1}^{'}-\mathbf{v}\otimes\bar{y}_{k+1}^{'}\|_\Z^2\right]\notag\\
&\le
\frac{1+\tau_\mathbf{B}^2}{2}\mathbb{E}\left[\left\|\mathbf{y}_k^{'}-\mathbf{v}\otimes \bar{y}_{k}^{'}\right\|_\Z^2\right]+c_2\mathbb{E}\left[\left\|\mathbf{x}_k-\mathbf{1}\otimes \bar{x}_{k}\right\|_\A^2\right]+c_3\alpha_k^2,\label{ine-2}
\end{align}
where the constants
\begin{equation*}\label{parameter-2}
\begin{aligned}
&c_2=8\frac{1+\tau_\mathbf{B}^2}{1-\tau_\mathbf{B}^2}\normm{\mathbf{I}_{nd}-\frac{\mathbf{v}\mathbf{1}^\intercal}{n}\otimes\mathbf{I}_{d}}_\Z^2\X^4\left(C_fL_g^2+C_gL_f^2\right)\normm{\tilde{\mathbf{A}}-\mathbf{I}_{nd}}^2,\\
&c_3=8\frac{1+\tau_\mathbf{B}^2}{1-\tau_\mathbf{B}^2}\normm{\mathbf{I}_{nd}-\frac{\mathbf{v}\mathbf{1}^\intercal}{n}\otimes\mathbf{I}_{d}}_\Z^2\X^2\left(C_fL_g^2+C_gL_f^2\right)\normm{\mathbf{A}}^2 \frac{c_b^2nC_gC_f}{(1-\tau_\Z)^2}.
\end{aligned}
\end{equation*}
	
Lastly, we show (\ref{ine-0}) through combining (\ref{ine-1}) with (\ref{ine-2}). Multiplying $c_4=\frac{1-\tau_\mathbf{A}^2}{4c_2}$ on both sides of inequality (\ref{ine-2}),
{\small\begin{equation*}
	\begin{aligned}
	&c_4\mathbb{E}\left[\|\mathbf{y}_{k+1}^{'}-\mathbf{v}\otimes\bar{y}_{k+1}^{'}\|_\Z^2\right]\\
	&\le \frac{1+\tau_\mathbf{B}^2}{2}c_4\mathbb{E}\left[\left\|\mathbf{y}_k^{'}-\mathbf{v}\otimes \bar{y}_{k}^{'}\right\|_\Z^2\right]+\frac{1-\tau_\mathbf{A}^2}{4}\mathbb{E}\left[\left\|\mathbf{x}_k-\mathbf{1}\otimes \bar{x}_k\right\|_\A^2\right]+c_3c_4\alpha_k^2.
	\end{aligned}
	\end{equation*}}
Substituting above inequality into (\ref{ine-1}), we have
\begin{equation*}\label{ine-3}
\begin{aligned}
&\mathbb{E}\left[\|\mathbf{x}_{k+1}-\mathbf{1}\otimes\bar{x}_{k+1}\|_\A^2\right]+c_4\mathbb{E}\left[\|\mathbf{y}_{k+1}^{'}-\mathbf{v}\otimes\bar{y}_{k+1}^{'}\|_\Z^2\right]\\
&\le \frac{3+\tau_\mathbf{A}^2}{4}\mathbb{E}\left[\left\|\mathbf{x}_k-\mathbf{1}\otimes \bar{x}_k\right\|_\A^2\right]+\frac{1+\tau_\mathbf{B}^2}{2}c_4\mathbb{E}\left[\left\|\mathbf{y}_k^{'}-\mathbf{v}\otimes \bar{y}_{k}^{'}\right\|_\Z^2\right]+(c_1+c_3c_4)\alpha_k^2\\
&\le \rho \left(\mathbb{E}\left[\left\|\mathbf{x}_k-\mathbf{1}\otimes \bar{x}_k\right\|_\A^2\right]+c_4\mathbb{E}\left[\left\|\mathbf{y}_k^{'}-\mathbf{v}\otimes \bar{y}_{k}^{'}\right\|_\Z^2\right]\right)+(c_1+c_3c_4)\alpha_k^2\\
&\cdots\\
&\le\rho^{k} \left(\mathbb{E}\left[\left\|\mathbf{x}_1-\mathbf{1}\otimes \bar{x}_1\right\|_\A^2\right]+c_4\mathbb{E}\left[\left\|\mathbf{y}_{1}^{'}-\mathbf{v}\otimes \bar{y}_{1}^{'}\right\|_\Z^2\right]\right)+(c_1+c_3c_4)\sum_{t=1}^k\rho^{k-t}\alpha_t^2,
\end{aligned}
\end{equation*}
where $\rho=\max\left\{\frac{1+\tau_\mathbf{B}^2}{2},~\frac{3+\tau_\mathbf{A}^2}{4}\right\}$.  The proof is complete.
\end{proof}

	Lemma \ref{lem:rate} indicates that the consensus errors can be explicitly
	decomposed into “bias” and “variance” terms. The bias term characterizes how fast initial
	conditions are forgotten and is related to condition numbers $\tau_\A$ and $\tau_\Z$ of network topology.
	The variance term characterizes the effect of new stochastic gradient, which is independent
	of the starting point and increases with the gradient upper bounds $C_f,C_g$ and Lipschitz parameters $L_f,L_g$.
	
	The following lemma is a technical result.
\begin{lem}\label{lem:weighted-seq}
	Suppose that stepsize $\alpha_k$ is nonincreasing and $\lim_{k\rightarrow\infty}\frac{\alpha_k}{\alpha_{k+1}}=1$. Then there exists a constant $c$ such that
	\begin{equation*}
	\sum_{t=1}^k\rho^{k-t}\alpha_t\le c\alpha_k,
	\end{equation*}
	where scalar $\rho=\max\left\{\frac{1+\tau_\mathbf{B}^2}{2},~\frac{3+\tau_\mathbf{A}^2}{4}\right\}$.
\end{lem}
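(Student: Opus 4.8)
The plan is to normalize by $\alpha_k$ and to show that the quantity $S_k\define\alpha_k^{-1}\sum_{t=1}^k\rho^{k-t}\alpha_t$ is bounded uniformly in $k$; taking $c$ to be this bound then finishes the proof. The crucial preliminary observation is that $\rho<1$: by Lemma \ref{lem:norm} we have $\tau_\mathbf{A}<1$ and $\tau_\mathbf{B}<1$, so each of $\frac{1+\tau_\mathbf{B}^2}{2}$ and $\frac{3+\tau_\mathbf{A}^2}{4}$ is strictly less than $1$, and hence $\rho\in(0,1)$. I would next use the hypothesis $\lim_{k\to\infty}\alpha_k/\alpha_{k+1}=1$ together with $\rho<1$ to fix a small $\delta>0$ with $q\define\rho(1+\delta)<1$, and then pick an index $N$ such that $1\le\alpha_m/\alpha_{m+1}\le 1+\delta$ for all $m\ge N$ (the lower bound holding because $\alpha_k$ is nonincreasing). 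The point of this choice is that the geometric decay of $\rho^{k-t}$ must dominate the slow growth of the ratio $\alpha_t/\alpha_k$.

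Second, I would split the sum into a ``recent'' part $N\le t\le k$ and an ``old'' part $1\le t< N$ (the latter empty when $k<N$). For the recent part, telescoping the ratios gives $\frac{\alpha_t}{\alpha_k}=\prod_{m=t}^{k-1}\frac{\alpha_m}{\alpha_{m+1}}\le(1+\delta)^{k-t}$, so each summand obeys $\rho^{k-t}\frac{\alpha_t}{\alpha_k}\le q^{k-t}$, and summing the geometric series yields $\sum_{t=N}^k q^{k-t}\le\frac{1}{1-q}$. For the old part I would instead use a matching \emph{lower} bound on $\alpha_k$: iterating $\alpha_{m+1}\ge\alpha_m/(1+\delta)$ for $m\ge N$ gives $\alpha_k\ge\alpha_N(1+\delta)^{-(k-N)}$, whence $\frac{\rho^{k-t}}{\alpha_k}\le\alpha_N^{-1}(1+\delta)^{-N}\rho^{-t}q^{k}$; since there are at most $N-1$ such terms and each $\alpha_t\le\alpha_1$, the old part contributes at most a constant multiple of $q^k$, which is bounded (indeed vanishing) in $k$.

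Combining the two estimates bounds $S_k$ by $\frac{1}{1-q}+C'q^k$ for all $k\ge N$, hence by a constant; the finitely many indices $k<N$ are handled trivially since each $S_k$ is then a finite expression, and $c$ is taken as the maximum of the resulting bounds. The main subtlety — the step I expect to require the most care — is the treatment of the old terms: it is not enough to control $\alpha_t/\alpha_k$, one must convert the slow-decay hypothesis into an explicit exponential \emph{lower} bound $\alpha_k\gtrsim(1+\delta)^{-k}$ and verify that the margin $\rho(1+\delta)<1$ (fixed \emph{before} choosing $N$) forces $\rho^{k-t}/\alpha_k$ to decay like $q^k$. Selecting $\delta$ small enough at the outset so that this margin survives is the pivot on which the whole argument turns.
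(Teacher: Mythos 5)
Your proof is correct, but it takes a genuinely different route from the paper's. The paper works with the same normalized quantity---setting $\beta_k=\sum_{t=1}^k\rho^{k-t}\alpha_t$ and $b_k=\beta_k/\alpha_k$---but instead of unrolling the sum it observes the one-step recursion $b_k=\rho\frac{\alpha_{k-1}}{\alpha_k}b_{k-1}+1$ and proves $b_k\le c$ by induction: once $k$ exceeds a threshold $k_0$ with $\frac{\alpha_{k-1}}{\alpha_k}\le\frac{2}{\rho+1}$ (a fixed choice of your $1+\delta$, giving $q=\frac{2\rho}{\rho+1}<1$), the hypothesis $b_{k-1}\le c$ with $c\ge\frac{\rho+1}{1-\rho}$ propagates to $b_k$, and the early indices are handled simply by taking $c\ge\sup_{k\le k_0}b_k$. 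The inductive formulation makes your ``old part'' disappear: the entire history up to step $k-1$ is absorbed into the single number $b_{k-1}$, so the explicit exponential lower bound $\alpha_k\ge\alpha_N(1+\delta)^{-(k-N)}$---the step you rightly flagged as the delicate one, and executed correctly---is never needed. What your unrolled argument buys in exchange is transparency: it shows directly how the geometric factor $q^{k-t}$ dominates the ratio growth $\alpha_t/\alpha_k\le(1+\delta)^{k-t}$, and it reveals that the contribution of the pre-threshold terms not only stays bounded but vanishes like $q^k$. Both arguments pivot on exactly the same margin, fixing $\rho(1+\delta)<1$ before choosing the threshold index, so the underlying mechanism is shared; only the bookkeeping differs, with the paper's induction being shorter and yours more explicit.
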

\begin{proof}
	Let $\beta_k=\sum_{t=1}^{k}\rho^{k-t}\alpha_t$, then $\beta_k=\rho\sum_{t=1}^{k-1}\rho^{k-1-t}\alpha_t+\alpha_{k}=\rho \beta_{k-1}+\alpha_{k}$.
	Denoting $b_k=\beta_k/\alpha_{k}$, then $b_k=\rho \frac{\alpha_{k-1}}{\alpha_{k}}b_{k-1}+1$.
	Noting that $\lim_{k\rightarrow\infty}\frac{\alpha_{k-1}}{\alpha_{k}}=1$ and $\rho<1$, there exists an integer $k_0>0$ such that $\frac{\alpha_{k-1}}{\alpha_{k}}\le \frac{2}{\rho+1}$ for $k>k_0$. Taking $c=\max\left\{\sup_{1\le k\le k_0}b_k,~\frac{\rho+1}{1-\rho}\right\}$,
	we have  $b_k\le c$ for $k\le k_0$. Suppose
	that the claim holds for $k-1$ ($k-1\ge k_0$), that is $b_{k-1}\le c$, then
	\begin{equation*}
	b_k=\rho \frac{\alpha_{k-1}}{\alpha_{k}}b_{k-1}+1\le \frac{2\rho}{\rho+1} c+1\le \frac{2\rho}{\rho+1} c+\frac{1-\rho}{\rho+1}c=c.
	\end{equation*}
	The proof is complete.	
\end{proof}

By the fact  $\lim_{k\rightarrow\infty}\frac{\alpha_k^2}{\alpha_{k+1}^2}=1$ and Lemma \ref{lem:weighted-seq}, (\ref{ine-0}) can be rewritten as
\begin{align*}
&\mathbb{E}\left[\|\mathbf{x}_{k+1}-\mathbf{1}\otimes\bar{x}_{k+1}\|_\A^2\right]+c_4\mathbb{E}\left[\|\mathbf{y}_{k+1}^{'}-\mathbf{v}\otimes\bar{y}_{k+1}^{'}\|_\Z^2\right]\\
&\le\rho^{k} \left(c_4\mathbb{E}\left[\left\|\mathbf{y}_{1}^{'}-\mathbf{v}\otimes \bar{y}_{1}^{'}\right\|_\Z^2\right]+\mathbb{E}\left[\left\|\mathbf{x}_1-\mathbf{1}\otimes \bar{x}_1\right\|_\A^2\right]\right)+(c_1+c_3c_4)c\alpha_k^2.
\end{align*}
Assuming $\rho^k=o\left(\alpha_k^2\right)$\footnote{In this paper, we use  constant stepsize and  sublinear diminishing stepsize.  Then  $\rho^k=o\left(\alpha_k^2\right)$ holds.}, the consensus errors have a rough upper bounds
\begin{equation}\label{x-y-consensus}
\mathbb{E}\left[\|\mathbf{x}_{k}-\mathbf{1}\otimes\bar{x}_{k}\|_\A^2\right]\le U_1\alpha_k^2,\quad
\mathbb{E}\left[\|\mathbf{y}_k^{'}-\mathbf{v}\otimes\bar{y}_{k}^{'}\|^2\right]\le U_1\alpha_k^2,
\end{equation}
where the constant $U_1$ depends on parameters $\tau_\A,\tau_\Z,C_f,C_g,L_f$ and $L_g$.

The next lemma quantifies the error of estimating $g_i(x_{i,k})$ by $z_{i,k}$.
\begin{lem}\label{lem:g-tra}
 Suppose that stepsizes $\alpha_k$ and $\beta_k$ are nonincreasing and $\lim_{k\rightarrow\infty}\frac{\alpha_k}{\alpha_{k+1}}=1$, $\beta_k\le 1$. Then under Assumptions \ref{ass-objective}-\ref{ass:matrix},
{\begin{align}
\mathbb{E}\left[\|\mathbf{z}_{k+1}- \mathbf{g}_{k+1}\|^2\right] &\le(1-\beta_k)^2\mathbb{E}\left[\|\mathbf{z}_k-\mathbf{g}_k\|^2\right]+\left(12C_g\X^2\normm{\tilde{\mathbf{A}}-\mathbf{I}_{nd}}^2U_1\right.\notag\\
\label{z-convergence}&\quad\left.+\frac{12c_b^2nC_g^2C_f\normm{\mathbf{A}}^2}{(1-\tau_\Z)^2}\right)\alpha_k^2+3V_g\beta_k^2,
\end{align}	}
where $ \mathbf{g}_k=\left[ g_1(x_{1,k})^\intercal,\cdots,  g_n(x_{n,k})^\intercal\right]^\intercal$ and $\X$ is defined in (\ref{norm-bound-1}).
\end{lem}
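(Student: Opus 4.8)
The plan is to derive a one-step recursion for the tracking error $\mathbf{z}_k-\mathbf{g}_k$ in which the stochastic correction $\mathbf{G}_{k+1}^{(1)}-\mathbf{G}_{k+1}^{(2)}$, evaluated at the \emph{same} fresh sample $\phi_{i,k+1}^{'}$, is what converts an $\mathcal{O}(1)$ variance contribution into an $\mathcal{O}(\alpha_k^2)$ one. First I would use $(1-\beta_k)+\beta_k=1$ to simplify the update (\ref{alg:z}) to $\mathbf{z}_{k+1}=(1-\beta_k)\mathbf{z}_k+\mathbf{G}_{k+1}^{(1)}-(1-\beta_k)\mathbf{G}_{k+1}^{(2)}$, and subtract $\mathbf{g}_{k+1}$ to obtain the decomposition $\mathbf{z}_{k+1}-\mathbf{g}_{k+1}=(1-\beta_k)(\mathbf{z}_k-\mathbf{g}_k)+\Delta_{k+1}$, where $\Delta_{k+1}=\left[\mathbf{G}_{k+1}^{(1)}-\mathbf{g}_{k+1}\right]-(1-\beta_k)\left[\mathbf{G}_{k+1}^{(2)}-\mathbf{g}_k\right]$.

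The key observation is that $\Delta_{k+1}$ has conditional mean zero. Since $\mathbf{x}_{k+1}$ and $\mathbf{x}_k$ are measurable with respect to the history $\mathcal{F}_k$ generated before $\phi_{i,k+1}^{'}$ is drawn, and $\mathbb{E}\left[G_i(x;\phi_{i,1}^{'})\right]=g_i(x)$ by Assumption \ref{ass-objective}(b), conditioning on $\mathcal{F}_k$ yields $\mathbb{E}\left[\mathbf{G}_{k+1}^{(1)}\mid\mathcal{F}_k\right]=\mathbf{g}_{k+1}$ and $\mathbb{E}\left[\mathbf{G}_{k+1}^{(2)}\mid\mathcal{F}_k\right]=\mathbf{g}_k$, hence $\mathbb{E}\left[\Delta_{k+1}\mid\mathcal{F}_k\right]=\mathbf{0}$. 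As $\mathbf{z}_k-\mathbf{g}_k$ is also $\mathcal{F}_k$-measurable, the cross term in the expansion of $\|(1-\beta_k)(\mathbf{z}_k-\mathbf{g}_k)+\Delta_{k+1}\|^2$ vanishes in expectation, so that $\mathbb{E}\left[\|\mathbf{z}_{k+1}-\mathbf{g}_{k+1}\|^2\right]=(1-\beta_k)^2\mathbb{E}\left[\|\mathbf{z}_k-\mathbf{g}_k\|^2\right]+\mathbb{E}\left[\|\Delta_{k+1}\|^2\right]$, which already produces the leading contraction factor $(1-\beta_k)^2$.

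It then remains to bound $\mathbb{E}\left[\|\Delta_{k+1}\|^2\right]$. I would regroup $\Delta_{k+1}$ as the three-term sum $\left[\mathbf{G}_{k+1}^{(1)}-\mathbf{G}_{k+1}^{(2)}\right]-\left[\mathbf{g}_{k+1}-\mathbf{g}_k\right]+\beta_k\left[\mathbf{G}_{k+1}^{(2)}-\mathbf{g}_k\right]$ and apply $\|a+b+c\|^2\le 3\left(\|a\|^2+\|b\|^2+\|c\|^2\right)$. The first two terms are controlled by the increment $\mathbf{x}_{k+1}-\mathbf{x}_k$: conditioning on $\mathcal{F}_k$ (so $\phi_{i,k+1}^{'}$ is independent of the frozen iterates) and integrating $\nabla G_i$ along the segment, the gradient bound $\mathbb{E}\left[\|\nabla G_i\|^2\mid\zeta_i\right]\le C_g$ of Assumption \ref{ass-objective}(c) gives $\mathbb{E}\left[\|\mathbf{G}_{k+1}^{(1)}-\mathbf{G}_{k+1}^{(2)}\|^2\right]\le C_g\,\mathbb{E}\left[\|\mathbf{x}_{k+1}-\mathbf{x}_k\|^2\right]$, and the same bound holds for $\mathbb{E}\left[\|\mathbf{g}_{k+1}-\mathbf{g}_k\|^2\right]$ since $\|\nabla g_i\|^2\le C_g$ by Jensen's inequality; the third term is bounded by the variance Assumption \ref{ass-objective}(d) and contributes the $V_g\beta_k^2$ term. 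Finally I would quantify $\mathbb{E}\left[\|\mathbf{x}_{k+1}-\mathbf{x}_k\|^2\right]$ exactly as in (\ref{F-error}): writing $\mathbf{x}_{k+1}-\mathbf{x}_k=(\tilde{\mathbf{A}}-\mathbf{I}_{nd})(\mathbf{x}_k-\mathbf{1}\otimes\bar{x}_k)-\alpha_k\tilde{\mathbf{A}}\mathbf{y}_k$, where the consensus component is annihilated because $\mathbf{A}$ is row stochastic, then bounding the two pieces with the consensus estimate (\ref{x-y-consensus}) and the tracker bound (\ref{y-bound}), together with the norm equivalence $\|\cdot\|\le\X\|\cdot\|_\A$ from (\ref{norm-bound-1}). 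Substituting these into the three-term estimate gives the $\alpha_k^2$ coefficients in (\ref{z-convergence}).

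The main obstacle is conceptual rather than computational: one must recognize that the shared sample $\phi_{i,k+1}^{'}$ appearing in $\mathbf{G}_{k+1}^{(1)}-\mathbf{G}_{k+1}^{(2)}$ is precisely what allows this difference to be bounded by $C_g\|\mathbf{x}_{k+1}-\mathbf{x}_k\|^2=\mathcal{O}(\alpha_k^2)$; had independent samples been used, this term would only admit an $\mathcal{O}(1)$ variance bound and the estimate would fail to decay. A secondary technical point is justifying the mean-square Lipschitz bound $\mathbb{E}\left[\|G_i(x_{i,k+1};\phi^{'})-G_i(x_{i,k};\phi^{'})\|^2\right]\le C_g\|x_{i,k+1}-x_{i,k}\|^2$ from the merely-in-expectation gradient bound, which I would handle by conditioning on $\mathcal{F}_k$ and using the integral representation of $G_i$ along the segment joining $x_{i,k}$ and $x_{i,k+1}$.
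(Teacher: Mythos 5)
Your proposal is correct and follows essentially the same route as the paper's proof: the same recursion $\mathbf{z}_{k+1}-\mathbf{g}_{k+1}=(1-\beta_k)(\mathbf{z}_k-\mathbf{g}_k)+\Delta_{k+1}$, the same conditioning argument (on the pre-$\phi_{i,k+1}^{'}$ filtration, the paper's $\mathcal{F}_k^{'}$) to kill the cross term, an equivalent three-term regrouping of $\Delta_{k+1}$ bounded via $\|a+b+c\|^2\le 3(\|a\|^2+\|b\|^2+\|c\|^2)$, and the identical treatment of $\mathbb{E}\left[\|\mathbf{x}_{k+1}-\mathbf{x}_k\|^2\right]$ through row stochasticity, (\ref{x-y-consensus}) and (\ref{y-bound}). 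The only (immaterial) differences are that your variance term is $\beta_k(\mathbf{G}_{k+1}^{(2)}-\mathbf{g}_k)$ where the paper uses $\beta_k(\mathbf{G}_{k+1}^{(1)}-\mathbf{g}_{k+1})$, and you drop the harmless $(1-\beta_k)^2\le 1$ prefactors; in addition, you spell out the conditioning-plus-integral justification of the mean-square Lipschitz bound that the paper leaves implicit.
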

\begin{proof}
By the definitions of $\mathbf{z}_{k+1}$ and $\mathbf{g}_{k+1}$,
\begin{equation}\label{z-recur}
\mathbf{z}_{k+1}- \mathbf{g}_{k+1} =(1-\beta_k)\left(\mathbf{z}_k-\mathbf{g}_k\right)+(\mathbf{G}_{k+1}^{(1)}-\mathbf{g}_{k+1})+(1-\beta_k)(\mathbf{g}_k-\mathbf{G}_{k+1}^{(2)}).
\end{equation}
Then
\begin{align}
&\mathbb{E}\left[\|\mathbf{z}_{k+1}- \mathbf{g}_{k+1}\|^2\right]\notag\\
&= (1-\beta_k)^2\mathbb{E}\left[\|\mathbf{z}_k-\mathbf{g}_k\|^2\right]+\mathbb{E}\left[\|(\mathbf{G}_{k+1}^{(1)}-\mathbf{g}_{k+1})+(1-\beta_k)(\mathbf{g}_k-\mathbf{G}_{k+1}^{(2)})\|^2\right]\notag\\
&\quad + 2\mathbb{E}\left[\left\langle(1-\beta_k)(\mathbf{z}_k-\mathbf{g}_k),(\mathbf{G}_{k+1}^{(1)}-\mathbf{g}_{k+1})+(1-\beta_k)(\mathbf{g}_k-\mathbf{G}_{k+1}^{(2)})\right\rangle\right]\notag\\
\label{z-bound}&= (1-\beta_k)^2\mathbb{E}\left[\|\mathbf{z}_k-\mathbf{g}_k\|^2\right]+\mathbb{E}\left[\|(\mathbf{G}_{k+1}^{(1)}-\mathbf{g}_{k+1})+(1-\beta_k)(\mathbf{g}_k-\mathbf{G}_{k+1}^{(2)})\|^2\right],
\end{align}
where the second equality follows from the fact
{\small\begin{equation*}
\begin{aligned}
\mathbb{E}\left[(\mathbf{G}_{k+1}^{(1)}-\mathbf{g}_{k+1})+(1-\beta_k)(\mathbf{g}_k-\mathbf{G}_{k+1}^{(2)})\right]=\mathbb{E}\left[\mathbb{E}\left[(\mathbf{G}_{k+1}^{(1)}-\mathbf{g}_{k+1})+(1-\beta_k)(\mathbf{g}_k-\mathbf{G}_{k+1}^{(2)})\bigg|\mathcal{F}_k^{'}\right]\right]=\mathbf{0}
\end{aligned}
\end{equation*}}
with
\begin{equation}\label{sigma-alge}
\begin{aligned}
&\mathcal{F}_1^{'}=\sigma\left(x_{i,1}, z_{i,1}, \phi_{i,1},\zeta_{i,1}:i\in\mathcal{V}\right),\\
&\mathcal{F}_k^{'}=\sigma\left(\{x_{i,1},z_{i,1}, \phi_{i,t},\zeta_{i,t}:i\in\mathcal{V}, 1\le t\le k\}\cup\{\phi_{i,t}^{'}:i\in\mathcal{V}, 2\le t\le k\}\right)(k\ge2).
\end{aligned}
\end{equation}
For the second term on the right hand side of (\ref{z-bound}),
\begin{align*}
&\mathbb{E}\left[\|(\mathbf{G}_{k+1}^{(1)}-\mathbf{g}_{k+1})+(1-\beta_k)(\mathbf{g}_k-\mathbf{G}_{k+1}^{(2)})\|^2\right]\notag\\
&=\mathbb{E}\left[\|(1-\beta_k)(\mathbf{G}_{k+1}^{(1)}-\mathbf{G}_{k+1}^{(2)})+\beta_k( \mathbf{G}_{k+1}^{(1)}- \mathbf{g}_{k+1})+(1-\beta_k)(\mathbf{g}_k-\mathbf{g}_{k+1})	\|^2\right]\notag\\
&\le 3(1-\beta_k)^2\mathbb{E}\left[\|\mathbf{G}_{k+1}^{(1)}-\mathbf{G}_{k+1}^{(2)}\|^2\right]+3\beta_k^2\mathbb{E}\left[\|\mathbf{G}_{k+1}^{(1)}- \mathbf{g}_{k+1}\|^2\right]+3(1-\beta_k)^2\mathbb{E}\left[\|\mathbf{g}_k-\mathbf{g}_{k+1}	 \|^2\right]\notag\\
\label{z-bound-1}&\le 6(1-\beta_k)^2C_g\mathbb{E}\left[\|\mathbf{x}_{k+1}-\mathbf{x}_k\|^2\right]+3\beta_k^2V_g,
\end{align*}
where the second inequality follows from  the conditions (c) and (d) in  Assumption \ref{ass-objective}. Substitute above inequality into (\ref{z-bound}),
\begin{align}
&\mathbb{E}\left[\|\mathbf{z}_{k+1}- \mathbf{g}_{k+1}\|^2\right]\notag\\
&\le (1-\beta_k)^2\mathbb{E}\left[\|\mathbf{z}_k-\mathbf{g}_k\|^2\right]+6(1-\beta_k)^2C_g\mathbb{E}\left[\|\mathbf{x}_{k+1}-\mathbf{x}_k\|^2\right]+3\beta_k^2V_g\notag\\
&=(1-\beta_k)^2\mathbb{E}\left[\|\mathbf{z}_k-\mathbf{g}_k\|^2\right]+6(1-\beta_k)^2C_g\mathbb{E}\left[\left\|\left(\tilde{\mathbf{A}}-\mathbf{I}_{nd}\right)\left(\mathbf{x}_k-\mathbf{1}\otimes \bar{x}_k\right)-\alpha_k\tilde{\mathbf{A}}\mathbf{y}_k\right\|^2\right]+3\beta_k^2V_g\notag\\
&\le(1-\beta_k)^2\mathbb{E}\left[\|\mathbf{z}_k-\mathbf{g}_k\|^2\right]+12(1-\beta_k)^2C_g\X^2\normm{\tilde{\mathbf{A}}-\mathbf{I}_{nd}}^2\mathbb{E}\left[ \left\|\mathbf{x}_k-\mathbf{1}\otimes \bar{x}_k\right\|_\A^2\right]\notag\\
&\quad+12(1-\beta_k)^2C_g\alpha_k^2\normm{\mathbf{A}}^2\mathbb{E}\left[\left\|\mathbf{y}_k\right\|^2\right]+3\beta_k^2V_g\notag\\
&\le(1-\beta_k)^2\mathbb{E}\left[\|\mathbf{z}_k-\mathbf{g}_k\|^2\right]+\left(12C_g\X^2\normm{\tilde{\mathbf{A}}-\mathbf{I}_{nd}}^2U_1+\frac{12c_b^2nC_g^2C_f\normm{\mathbf{A}}^2}{(1-\tau_\Z)^2}\right)\alpha_k^2+3V_g\beta_k^2\notag,
\end{align}
where $\X$ is defined in (\ref{norm-bound-1}), the equality follows from the fact $\left(\tilde{\mathbf{A}}-\mathbf{I}_{nd}\right)(\mathbf{1}\otimes \bar{x}_k)=\mathbf{0}$ by the row stochasticity of $\mathbf{A}$, the last inequality follows from (\ref{y-bound}), (\ref{x-y-consensus}) and the definition of $\beta_k$. 
The proof is complete.
\end{proof}
The following lemma studies the boundness  of stochastic noise accumulated in gradient tracking process.
\begin{lem}\label{lem:nonconv}
	Define
	\begin{equation}\label{noise}
	\xi_k\define y_k-y_k^{'}.
	\end{equation}
	Under the conditions of Lemma \ref{lem:rate}, 
	\begin{itemize}
		\item [(i)]$\mathbb{E}\left[\left\|\xi_k\right\|^2\right]\le \frac{c_b^24n C_fC_g}{(1-\tau_\Z)^2}$, where $c_b=\max\left\{\X,\frac{\normm{\mathbf{B}-\mathbf{I}_{n}}_\Z}{\tau_\Z}\X\right\}$;
		\item [(ii)] there exists constant $U_3>0$ such that
		\begin{equation*}
		\left|\mathbb{E}\left[\left\langle \nabla h(\bar{x}_k),\left(\frac{\mathbf{u}^\intercal}{n}\otimes\mathbf{I}_{d}\right)\xi_k\right\rangle\right]\right|\le U_3\alpha_k.
		\end{equation*}
	\end{itemize}
\end{lem}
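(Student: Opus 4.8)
The plan is to exploit the linear recursion satisfied by $\xi_k$ together with the martingale structure of the gradient noise. Subtracting the recursion (\ref{g-tra}) for $\mathbf{y}_k^{'}$ from the recursion for $\mathbf{y}_k$ in (\ref{alg:new form}) gives $\xi_{k+1}=\tilde{\mathbf{B}}\xi_k+\mathbf{e}_{k+1}-\mathbf{e}_k$ with $\mathbf{e}_t\define\mathbf{H}_t-\mathbf{J}_t$ and $\xi_1=\mathbf{e}_1$, so $\xi_k$ obeys exactly the same dynamics as $\mathbf{y}_k$ with $\mathbf{H}_t$ replaced by $\mathbf{e}_t$. Unrolling as in the derivation preceding (\ref{y-bound}) yields $\xi_k=\sum_{t=1}^{k}\tilde{\mathbf{B}}(k,t)\mathbf{e}_t$ with $\tilde{\mathbf{B}}(k,t)$ from (\ref{mat-B}). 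The $i$-th block of $\mathbf{e}_t$ is $\nabla G_i(x_{i,t};\phi_{i,t})\nabla F_i(z_{i,t};\zeta_{i,t})-\nabla g_i(x_{i,t})\nabla f_i(z_{i,t})$; using $\|a-b\|^2\le2\|a\|^2+2\|b\|^2$, Assumption \ref{ass-objective}(c) for the $\mathbf{H}_t$ part, and Jensen's inequality with (c) to bound $\|\nabla g_i\|^2\le C_g$, $\|\nabla f_i\|^2\le C_f$ for the $\mathbf{J}_t$ part, I obtain $\mathbb{E}\left[\|\mathbf{e}_t\|^2\right]\le4nC_gC_f$. Part (i) then follows verbatim from (\ref{B-bound})--(\ref{y-bound}): replacing $\mathbf{H}_t$ by $\mathbf{e}_t$ and the bound $nC_gC_f$ by $4nC_gC_f$ gives $\mathbb{E}\left[\|\xi_k\|^2\right]\le c_b^2\sum_{t_1,t_2}\tau_\Z^{2k-t_1-t_2}\,4nC_gC_f\le\frac{4c_b^2nC_gC_f}{(1-\tau_\Z)^2}$.

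For part (ii) I would first isolate the martingale structure, which is delicate because of the interleaving of the two sample streams. Let $\mathcal{G}_t$ be the $\sigma$-algebra obtained from $\mathcal{F}_{t-1}^{'}$ in (\ref{sigma-alge}) by adjoining the inner-function samples $\phi_{i,t}^{'}$ drawn at step $t$. Then $x_{i,t}\in\mathcal{F}_{t-1}^{'}$ and $z_{i,t}$ (which depends on $\phi_{i,t}^{'}$) are both $\mathcal{G}_t$-measurable, so $\bar{x}_t$ and $\nabla h(\bar{x}_t)$ are $\mathcal{G}_t$-measurable, while the fresh samples $\phi_{i,t},\zeta_{i,t}$ defining $\mathbf{H}_t$ are independent of $\mathcal{G}_t$; Assumption \ref{ass-objective}(b) then yields $\mathbb{E}\left[\mathbf{e}_t\mid\mathcal{G}_t\right]=\mathbf{0}$. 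Writing $\mathbf{P}_t\define\left(\frac{\mathbf{u}^\intercal}{n}\otimes\mathbf{I}_d\right)\tilde{\mathbf{B}}(k,t)$ (deterministic), the reference $\nabla h(\bar{x}_t)$ gives $\mathbb{E}\left[\langle\nabla h(\bar{x}_t),\mathbf{P}_t\mathbf{e}_t\rangle\right]=0$, hence
\begin{equation*}
\mathbb{E}\left[\left\langle\nabla h(\bar{x}_k),\left(\frac{\mathbf{u}^\intercal}{n}\otimes\mathbf{I}_d\right)\xi_k\right\rangle\right]=\sum_{t=1}^{k}\mathbb{E}\left[\left\langle\nabla h(\bar{x}_k)-\nabla h(\bar{x}_t),\mathbf{P}_t\mathbf{e}_t\right\rangle\right].
\end{equation*}
Each summand is controlled by Cauchy--Schwarz, using that $\nabla h$ is Lipschitz with constant $L_h=L_g\sqrt{C_f}+C_gL_f$ (an elementary consequence of Assumption \ref{ass-objective}(a),(c) applied to $\nabla h(x)=\frac1n\sum_j\nabla g_j(x)\nabla f_j(g_j(x))$), that $\normm{\mathbf{P}_t}\le\frac{\|\mathbf{u}\|}{n}c_b\tau_\Z^{k-t}$ by (\ref{B-bound}), and that $\mathbb{E}\left[\|\mathbf{e}_t\|^2\right]\le4nC_gC_f$.

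It then remains to estimate $\mathbb{E}\left[\|\bar{x}_k-\bar{x}_t\|^2\right]$ and to sum. Since $\mathbf{u}^\intercal\mathbf{A}=\mathbf{u}^\intercal$, the mean iterate satisfies $\bar{x}_{s+1}=\bar{x}_s-\alpha_s\left(\frac{\mathbf{u}^\intercal}{n}\otimes\mathbf{I}_d\right)\mathbf{y}_s$, so $\bar{x}_k-\bar{x}_t=-\sum_{s=t}^{k-1}\alpha_s\left(\frac{\mathbf{u}^\intercal}{n}\otimes\mathbf{I}_d\right)\mathbf{y}_s$; combining Cauchy--Schwarz over the $k-t$ terms with the uniform bound $\mathbb{E}\left[\|\mathbf{y}_s\|^2\right]\le\frac{c_b^2nC_gC_f}{(1-\tau_\Z)^2}$ from (\ref{y-bound}) and $\alpha_s\le\alpha_t$ gives $\mathbb{E}\left[\|\bar{x}_k-\bar{x}_t\|^2\right]\le C(k-t)^2\alpha_t^2$. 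Collecting the factors bounds the right-hand side by a constant times $\sum_{t=1}^{k}(k-t)\tau_\Z^{k-t}\alpha_t$. I expect this weighted sum to be the main obstacle, since the geometric damping $\tau_\Z^{k-t}$ now fights both the growing factor $(k-t)$ and the fact that $\alpha_t\ge\alpha_k$. I would handle it by an argument in the spirit of Lemma \ref{lem:weighted-seq}: split the sum at $t=k-K$, use $\lim_k\frac{\alpha_k}{\alpha_{k+1}}=1$ to get $\alpha_{k-j}\le(1+\epsilon)^j\alpha_k$ on the lower range with $\epsilon$ chosen so small that $\tau_\Z(1+\epsilon)<1$ (making $\sum_j j\,(\tau_\Z(1+\epsilon))^j$ finite), while the finitely many remaining terms carry a factor $\tau_\Z^{k-K}$ that is $o(\alpha_k)$. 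This yields $\sum_{t=1}^{k}(k-t)\tau_\Z^{k-t}\alpha_t\le c\alpha_k$ and hence the claim for a suitable constant $U_3$.
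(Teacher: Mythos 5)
Your proposal is correct and, in substance, it is the paper's own argument. Part (i) is essentially verbatim the paper's proof: the same representation $\xi_k=\sum_{t=1}^{k}\tilde{\mathbf{B}}(k,t)\epsilon_t$ (the paper obtains it directly from the definitions rather than via the recursion $\xi_{k+1}=\tilde{\mathbf{B}}\xi_k+\epsilon_{k+1}-\epsilon_k$, but it is the same identity), the same bound $\mathbb{E}[\|\epsilon_t\|^2]\le 4nC_fC_g$, and the same geometric summation. In part (ii), your $\sigma$-algebra $\mathcal{G}_t$ coincides exactly with the paper's $\mathcal{F}_t$ in (\ref{s-algebra}), so your centering step, replacing $\nabla h(\bar{x}_k)$ by $\nabla h(\bar{x}_k)-\nabla h(\bar{x}_t)$ in each summand via the martingale-difference property, is precisely what the paper does (the paper merely writes this difference as a telescoping sum of increments). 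The one genuine divergence is how the resulting sum is closed. The paper keeps the increments $\bar{x}_l-\bar{x}_{l-1}$ separate, bounds each product $\mathbb{E}[\|\bar{x}_l-\bar{x}_{l-1}\|\,\|\epsilon_t\|]$ by AM--GM using (\ref{y-bound}) and (\ref{e-bound}), and then exchanges the order of summation, which collapses the double sum to $\sum_{l}\alpha_l\tau_\Z^{k-l}\bigl(\sum_{j\ge1}\tau_\Z^{j}\bigr)$, so Lemma \ref{lem:weighted-seq} applies as stated. You instead aggregate via Cauchy--Schwarz, using $\mathbb{E}[\|\bar{x}_k-\bar{x}_t\|^2]\le C(k-t)^2\alpha_t^2$, which leaves the heavier weighted sum $\sum_{t}(k-t)\tau_\Z^{k-t}\alpha_t$; Lemma \ref{lem:weighted-seq} does not cover this because of the extra factor $k-t$, so you need the strengthened version you sketch. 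That sketch is sound: choosing $\epsilon>0$ with $\tau_\Z(1+\epsilon)<1$, the condition $\alpha_{k-1}/\alpha_k\rightarrow1$ gives $\alpha_{k-j}\le(1+\epsilon)^{j}\alpha_k$ once $k-j$ exceeds a fixed index, so that range contributes at most $\alpha_k\sum_{j\ge0}j\left(\tau_\Z(1+\epsilon)\right)^{j}<\infty$ times a constant, while the remaining terms are $\mathcal{O}\left(k\tau_\Z^{k-k_0}\right)=o(\alpha_k)$ for the constant and sublinearly diminishing stepsizes used in the paper. So both routes work; the paper's summation exchange is the lighter tool (no new lemma needed), while your Cauchy--Schwarz aggregation is arguably more transparent but pays for it in the summation estimate, which you correctly identified as the main obstacle and resolved.
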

\begin{proof}We first show part (i). By the definition of $\xi_k$,
	\begin{equation}\label{e-repre}
	\xi_k=\sum_{t=1}^{k-1}\tilde{\mathbf{B}}^{k-1-t}(\tilde{\mathbf{B}}-\mathbf{I}_{nd})\epsilon_t+\epsilon_{k}=\sum_{t=1}^{k}\tilde{\mathbf{B}}(k,t)\epsilon_t,
	\end{equation}
	where $\epsilon_t\define\mathbf{H}_t-\mathbf{J}_t$, $\mathbf{H}_t$ and $\mathbf{J}_t$ present in (\ref{alg:new form}) and Lemma \ref{lem:rate} respectively, $\tilde{\mathbf{B}}(k,t)$ is defined in (\ref{mat-B}).
	Then we have
	\begin{align}
	\mathbb{E}[\|\xi_k\|^2]
	&\le\sum_{t_1=1}^{k}\sum_{t_2=1}^{k}\normm{\tilde{\mathbf{B}}(k,t_1)}\normm{\tilde{\mathbf{B}}(k,t_2)}\mathbb{E}\left[\|\epsilon_{t_1}\|\|\epsilon_{t_2}\|\right]\notag\\
	&\le c_b^2\sum_{t_1=1}^{k}\sum_{t_2=1}^{k}\tau_\Z^{2k-t_1-t_2}\mathbb{E}\left[\|\epsilon_{t_1}\|\|\epsilon_{t_2}\|\right]\notag\\
	\label{xi-bound}&\le c_b^2\sum_{t_1=1}^{k}\sum_{t_2=1}^{k}\tau_\Z^{2k-t_1-t_2}\frac{\mathbb{E}\left[\|\epsilon_{t_1}\|^2+\|\epsilon_{t_2}\|^2\right]}{2},
	\end{align}
	where $c_b=\max\left\{\X,\frac{\normm{\mathbf{B}-\mathbf{I}_{n}}_\Z}{\tau_\Z}\X\right\}$, the second inequality follows from (\ref{B-bound}). By the definition of $\epsilon_k$,
	\begin{align}
	\mathbb{E}\left[\|\epsilon_{k}\|^2\right]
	&=\sum_{j=1}^n\mathbb{E}\left[\|\nabla G_j(x_{j,k};\phi_{j,k})\nabla F_j(z_{j,k};\zeta_{j,k})-\nabla g_j(x_{j,k})\nabla f_j(z_{j,k})\|^2\right]\notag\\
	&\le 2\sum_{j=1}^n\left(\mathbb{E}\left[\|\nabla G_j(x_{j,k};\phi_{j,k})\|^2\|\nabla F_j(z_{j,k};\zeta_{j,k})\|^2\right]+C_fC_g\right)\notag\\
	&= 2\sum_{j=1}^n\left(\mathbb{E}\left[\mathbb{E}\left[\|\nabla G_j(x_{j,k};\phi_{j,k})\|^2\|\nabla F_j(z_{j,k};\zeta_{j,k})\|^2\big|\mathcal{F}_k,\zeta_{j,k}\right]\right]+C_fC_g\right)\notag\\
	\label{e-bound}&\le 2\sum_{j=1}^n\left(C_g\mathbb{E}\left[\|\nabla F_j(z_{j,k};\zeta_{j,k})\|^2\right]+C_fC_g\right)\le 4n C_fC_g,
	\end{align}
	where 
    \begin{equation}\label{s-algebra}
    \begin{aligned}
    &\mathcal{F}_1=\sigma\{x_{i,1}, z_{i,1}:i\in\mathcal{V}\},\\
    &\mathcal{F}_k=\sigma\left(\{x_{i,1},z_{i,1}, \phi_{i,t},\zeta_{i,t}:i\in\mathcal{V}, 1\le t\le k-1\}\cup\{\phi_{i,t}^{'}:i\in\mathcal{V}, 2\le t\le k\}\right)(k\ge2).
    \end{aligned}
	\end{equation}
 Substitute (\ref{e-bound}) into (\ref{xi-bound}), $\mathbb{E}[\|\xi_k\|^2]
 \le c_b^24n C_fC_g\sum_{t_1=1}^{k}\sum_{t_2=1}^{k}\tau_\Z^{2k-t_1-t_2}\le \frac{c_b^24n C_fC_g}{(1-\tau_\Z)^2}$.
	Part (i) is obtained.
	
	By  (\ref{e-repre}),
	{\small\begin{align*}
	&\left|\mathbb{E}\left[\left\langle \nabla h(\bar{x}_k),\left(\frac{\mathbf{u}^\intercal}{n}\otimes\mathbf{I}_{d}\right)\xi_k\right\rangle\right]\right|\\
	&=\left|\sum_{t=1}^{k}\mathbb{E}\left[\left\langle \nabla h(\bar{x}_k),\left(\frac{\mathbf{u}^\intercal}{n}\otimes\mathbf{I}_{d}\right)\tilde{\mathbf{B}}(k,t)\epsilon_t\right\rangle\right]\right|\\
	&=\left|\sum_{t=1}^{k-1}\mathbb{E}\left[\mathbb{E}\left[\left\langle \sum_{l=t+1}^{k}\left(\nabla h(\bar{x}_l)-\nabla h(\bar{x}_{l-1})\right)+\nabla h(\bar{x}_t),\left(\frac{\mathbf{u}^\intercal}{n}\otimes\mathbf{I}_{d}\right)\tilde{\mathbf{B}}(k,t)\epsilon_t\right\rangle\bigg|\mathcal{F}_t\right]\right]\right.\\
	&\quad\left.+\mathbb{E}\left[\mathbb{E}\left[\left\langle \nabla h(\bar{x}_k),\left(\frac{\mathbf{u}^\intercal}{n}\otimes\mathbf{I}_{d}\right)\tilde{\mathbf{B}}(k,t)\epsilon_k\right\rangle\bigg|\mathcal{F}_k\right]\right]\right|\\
	&=\left|\sum_{t=1}^{k-1}\mathbb{E}\left[\left\langle \sum_{l=t+1}^{k}\left(\nabla h(\bar{x}_l)-\nabla h(\bar{x}_{l-1})\right),\left(\frac{\mathbf{u}^\intercal}{n}\otimes\mathbf{I}_{d}\right)\tilde{\mathbf{B}}(k,t)\epsilon_t\right\rangle\right]\right|\\
	&\le\frac{\|\mathbf{u}\|Lc_b}{n}\sum_{t=1}^{k-1}\tau_\Z^{k-t}\sum_{l=t+1}^{k}\mathbb{E}\left[\left\| \bar{x}_l-\bar{x}_{l-1}\right\|\left\|\epsilon_t\right\|\right]\\
	&\le\frac{\|\mathbf{u}\|^2Lc_b}{n^2}\sum_{t=1}^{k-1}\tau_\Z^{k-t}\sum_{l=t+1}^{k}\alpha_l\mathbb{E}\left[\left\| \mathbf{y}_l\right\|\left\|\epsilon_t\right\|\right],
	\end{align*}}
	where the third equality holds as $\{\epsilon_t\}$ is a martingale difference sequence, the first inequality follows from (\ref{B-bound}) and the last inequality follows from the fact $\bar{x}_{k+1}=\bar{x}_{k}-\alpha_k\left(\frac{\mathbf{u}^\intercal}{n}\otimes\mathbf{I}_{d}\right)\mathbf{y}_k$. By (\ref{y-bound}) and (\ref{e-bound}),
	\begin{align}
	\mathbb{E}\left[\left\| \mathbf{y}_l\right\|\left\|\epsilon_t\right\|\right]\le\frac{\mathbb{E}\left[\left\|\mathbf{y}_l\right\|^2\right]+\mathbb{E}\left[\left\|\epsilon_k\right\|^2\right]}{2}\le  \frac{c_b^2n\ C_fC_g}{2(1-\tau_\Z)^2}+2nC_fC_g.
	\end{align}
	Let $U=\frac{\|\mathbf{u}\|^2Lc_b}{n}\left(\frac{c_b^2\ C_fC_g}{2(1-\tau_\Z)^2}+2C_fC_g\right)$,
	{\small\begin{equation*}
	\mathbb{E}\left[\left\langle \nabla h(\bar{x}_k),\left(\frac{\mathbf{u}^\intercal}{n}\otimes\mathbf{I}_{d}\right)\xi_k\right\rangle\right]\le(1-\tau_\Z)U \sum_{t=1}^{k-1}\tau_\Z^{k-t}\sum_{l=t+1}^{k}\alpha_l=(1-\tau_\Z)U\sum_{t=2}^{k}\alpha_t\tau_\Z^{k-t}\left(\sum_{l=1}^{t-1}\tau_\Z^l\right)\le U c \alpha_k,
	\end{equation*}}
	where the last inequality follows from the fact $(1-\tau_\Z)\left(\sum_{l=1}^{t-1}\tau_\Z^l\right)\le 1$ and Lemma \ref{lem:weighted-seq}. Part (ii) holds. The proof is complete.
\end{proof}

With Lemmas \ref{lem:norm}-\ref{lem:nonconv} at hand, we are ready to  present the convergence rate of AB-DSCSC.
\begin{thm}\label{thm:nonconv}
	Let $\alpha_{k}=\frac{a}{\sqrt{K}}$, $\beta_k=\frac{\alpha_{k} C_gL_f^2}{n}$ and $a<\frac{n}{ C_gL_f^2}$. Then under Assumptions \ref{ass-objective}-\ref{ass:matrix},
	{\small\begin{equation*}
	\frac{1}{K}\sum_{k=1}^K\mathbb{E}\left[\|\nabla h(x_{i,k})\|^2\right]
\le  \frac{8\left(\mathbb{E}\left[h(\bar{x}_1)\right]+\mathbb{E}\left[\|\mathbf{z}_1-\mathbf{g}_1\|^2\right]\right)/a+8a U_4}{\sqrt{K}}+\frac{2\left(\frac{4L^2U_1}{n}+\frac{4\|\mathbf{u}\|^2U_1}{n^2}+L^2U_1\right)a^2}{K},
	\end{equation*}}
	where constants $U_1$ is defined in (\ref{x-y-consensus}), $U_3$ presents in  Lemma \ref{lem:nonconv} and
	\begin{equation*}
	U_4=\frac{L\|\mathbf{u}\|^2c_b^2\ C_fC_g}{2n^2(1-\tau_\Z)^2}+U_3+12C_g\X^2\normm{\tilde{\mathbf{A}}-\mathbf{I}_{nd}}^2U_1+\frac{12c_b^2nC_g^2C_f\normm{\mathbf{A}}^2}{(1-\tau_\Z)^2}+3V_g\frac{ C_g^2L_f^4}{n^2}.
	\end{equation*}
\end{thm}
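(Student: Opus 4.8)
The plan is to run a descent argument on the objective value at the $\mathbf{u}$-weighted network average $\bar{x}_k=(\frac{\mathbf{u}^\intercal}{n}\otimes\mathbf{I}_d)\mathbf{x}_k$ and to couple it with the inner-function error recursion of Lemma \ref{lem:g-tra}, merging the two into a single Lyapunov function
\[
V_k\define\mathbb{E}\left[h(\bar{x}_k)\right]+\mathbb{E}\left[\|\mathbf{z}_k-\mathbf{g}_k\|^2\right].
\]
Since $\mathbf{u}^\intercal\mathbf{A}=\mathbf{u}^\intercal$, the compact form (\ref{alg:new form}) yields the exact averaged dynamics $\bar{x}_{k+1}=\bar{x}_k-\alpha_k(\frac{\mathbf{u}^\intercal}{n}\otimes\mathbf{I}_d)\mathbf{y}_k$. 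Under Assumption \ref{ass-objective} the map $\nabla h$ is Lipschitz with a composite constant $L$ formed from $C_f,C_g,L_f,L_g$, so the standard descent inequality gives
\[
\mathbb{E}\left[h(\bar{x}_{k+1})\right]\le \mathbb{E}\left[h(\bar{x}_k)\right]-\alpha_k\mathbb{E}\left[\left\langle\nabla h(\bar{x}_k),\left(\frac{\mathbf{u}^\intercal}{n}\otimes\mathbf{I}_d\right)\mathbf{y}_k\right\rangle\right]+\frac{L\alpha_k^2}{2}\mathbb{E}\left[\left\|\left(\frac{\mathbf{u}^\intercal}{n}\otimes\mathbf{I}_d\right)\mathbf{y}_k\right\|^2\right].
\]

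First I would split the tracker as $\mathbf{y}_k=\mathbf{y}_k'+\xi_k$ with $\xi_k$ the accumulated noise of Lemma \ref{lem:nonconv}, and split $\mathbf{y}_k'=\mathbf{v}\otimes\bar{y}_k'+(\mathbf{y}_k'-\mathbf{v}\otimes\bar{y}_k')$, where $\bar{y}_k'=\frac{1}{n}\sum_i\nabla g_i(x_{i,k})\nabla f_i(z_{i,k})$ by the column stochasticity of $\mathbf{B}$. The inner product then splits into four pieces: (a) the dominant term $\frac{\mathbf{u}^\intercal\mathbf{v}}{n}\langle\nabla h(\bar{x}_k),\bar{y}_k'\rangle$; (b) a cross term with the gradient-tracking consensus error $\mathbf{y}_k'-\mathbf{v}\otimes\bar{y}_k'$, controlled in mean square by $U_1\alpha_k^2$ via (\ref{x-y-consensus}); (c) the noise cross term $\langle\nabla h(\bar{x}_k),(\frac{\mathbf{u}^\intercal}{n}\otimes\mathbf{I}_d)\xi_k\rangle$, which is $O(\alpha_k)$, hence $O(\alpha_k^2)$ after the stepsize prefactor, by Lemma \ref{lem:nonconv}(ii); and the quadratic term, bounded using $\mathbb{E}\|\mathbf{y}_k\|^2\le\frac{c_b^2nC_gC_f}{(1-\tau_\Z)^2}$ from (\ref{y-bound}). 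For (a) I would write $\bar{y}_k'-\nabla h(\bar{x}_k)=\frac{1}{n}\sum_i[\nabla g_i(x_{i,k})\nabla f_i(z_{i,k})-\nabla g_i(\bar{x}_k)\nabla f_i(g_i(\bar{x}_k))]$ and estimate it by the Lipschitz and boundedness conditions of Assumption \ref{ass-objective}, so its mean square is governed by the $\mathbf{x}$-consensus error $\mathbb{E}\|\mathbf{x}_k-\mathbf{1}\otimes\bar{x}_k\|_\A^2\le U_1\alpha_k^2$ plus the inner error $\frac{C_gL_f^2}{n}\mathbb{E}\|\mathbf{z}_k-\mathbf{g}_k\|^2$. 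After Young's inequality the inner product contributes a clean $-c\alpha_k\mathbb{E}\|\nabla h(\bar{x}_k)\|^2$, an $O(\alpha_k^2)$ remainder, and a term proportional to $\alpha_k\frac{C_gL_f^2}{n}\mathbb{E}\|\mathbf{z}_k-\mathbf{g}_k\|^2$.

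The decisive coupling is the choice $\beta_k=\frac{\alpha_kC_gL_f^2}{n}$, where the hypothesis $a<\frac{n}{C_gL_f^2}$ guarantees $\beta_k\le 1$ as required by Lemma \ref{lem:g-tra}. Adding (\ref{z-convergence}) and using $(1-\beta_k)^2\le 1-\beta_k$, the contraction $-\frac{\alpha_kC_gL_f^2}{n}\mathbb{E}\|\mathbf{z}_k-\mathbf{g}_k\|^2$ absorbs exactly the inner-error term produced by the descent, so the combined potential obeys $V_{k+1}\le V_k-c\alpha_k\mathbb{E}\|\nabla h(\bar{x}_k)\|^2+U_4\alpha_k^2$, where $U_4$ collects all $O(\alpha_k^2)$ remainders: the quadratic descent term, the noise constant $U_3$, the two $\alpha_k^2$-coefficients of (\ref{z-convergence}), and the $\beta_k^2$ variance $3V_g\frac{C_g^2L_f^4}{n^2}$. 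Telescoping over $k=1,\dots,K$ and using a lower bound on $h$ (w.l.o.g. $h\ge 0$) gives $c\sum_{k=1}^K\alpha_k\mathbb{E}\|\nabla h(\bar{x}_k)\|^2\le V_1+U_4\sum_{k=1}^K\alpha_k^2$; substituting $\alpha_k=\frac{a}{\sqrt{K}}$ produces the $\mathcal{O}(K^{-1/2})$ bound for $\frac{1}{K}\sum_k\mathbb{E}\|\nabla h(\bar{x}_k)\|^2$ with leading constant $1/c=8$. Finally I would pass from $\bar{x}_k$ to $x_{i,k}$ through $\|\nabla h(x_{i,k})\|^2\le 2\|\nabla h(\bar{x}_k)\|^2+2L^2\|x_{i,k}-\bar{x}_k\|^2$ and the consensus estimate (\ref{x-y-consensus}), which yields the extra $\mathcal{O}(K^{-1})$ term with coefficient proportional to $L^2U_1$.

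The hardest part will be the treatment of the surrogate direction $(\frac{\mathbf{u}^\intercal}{n}\otimes\mathbf{I}_d)\mathbf{y}_k$. Because the $x$-average uses the left Perron vector $\mathbf{u}$ of $\mathbf{A}$ while the tracked-gradient sum is preserved only along $\mathbf{1}$ (column stochasticity of $\mathbf{B}$), the surrogate does not telescope to the true gradient, and one must carry the scaling $\frac{\mathbf{u}^\intercal\mathbf{v}}{n}$ together with the $\mathbf{y}'$-consensus error throughout the argument. A second delicate point is that the accumulated noise $\xi_k$ is \emph{not} a martingale difference, since it retains the geometric memory $\tilde{\mathbf{B}}(k,t)$; the cross term $\mathbb{E}[\langle\nabla h(\bar{x}_k),(\frac{\mathbf{u}^\intercal}{n}\otimes\mathbf{I}_d)\xi_k\rangle]$ does not vanish and is only $O(\alpha_k)$ after the telescoping-of-$\nabla h$ device of Lemma \ref{lem:nonconv}(ii), so keeping it at order $\alpha_k^2$ in the descent is what ultimately removes the need for an increasing batch size. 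The final subtlety is the exact matching between the descent's inner-error term and the contraction factor $\beta_k$ in (\ref{z-convergence}), which forces the proportionality $\beta_k\propto\alpha_k$ and the admissible range of $a$.
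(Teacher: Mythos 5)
Your proposal is correct and follows essentially the same route as the paper's proof: the same Lyapunov function $\mathbb{E}[h(\bar{x}_k)]+\mathbb{E}[\|\mathbf{z}_k-\mathbf{g}_k\|^2]$, the same splitting $\mathbf{y}_k=\mathbf{v}\otimes\bar{y}_k'+(\mathbf{y}_k'-\mathbf{v}\otimes\bar{y}_k')+\xi_k$ with the consensus bound (\ref{x-y-consensus}), Lemma \ref{lem:nonconv}(ii) for the non-martingale noise cross term, and the exact cancellation $[(1-\beta_k)^2+\beta_k]\le 1$ enforced by $\beta_k=\alpha_k C_gL_f^2/n$, followed by telescoping and the $2\|\nabla h(\bar{x}_k)\|^2+2L^2\|x_{i,k}-\bar{x}_k\|^2$ transfer to the local iterates. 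The only cosmetic difference is that you compare $\bar{y}_k'$ to $\nabla h(\bar{x}_k)$ in one step where the paper routes through the intermediate quantity $\frac{1}{n}\sum_j\nabla g_j(x_{j,k})\nabla f_j(g_j(x_{j,k}))$ (its $P_1,P_2$ split), which is an equivalent bookkeeping choice.
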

\begin{proof}
	We first estimate the upper bound of $\nabla h(\bar{x}_k)$ in expectation.
	Noting that $\nabla h(x)$ is $L\left(\define C_g^2L_f + C_fL_g\right)$-smooth \cite{Junyu2019SC},
	\begin{align*}
	h(\bar{x}_{k+1})&\le h(\bar{x}_k)+\langle \nabla h(\bar{x}_k),\bar{x}_{k+1}-\bar{x}_k\rangle+\frac{L}{2}\|\bar{x}_{k+1}-\bar{x}_k\|^2\notag\\
	&=h(\bar{x}_k)-\left\langle \nabla h(\bar{x}_k),\alpha_k\left(\frac{\mathbf{u}^\intercal}{n}\otimes\mathbf{I}_{d}\right)\left(\mathbf{y}_k^{'}+\xi_k\right)\right\rangle+\frac{L}{2}\left\|\alpha_{k}\left(\frac{\mathbf{u}^\intercal}{n}\otimes\mathbf{I}_{d}\right)\mathbf{y}_k\right\|^2\notag\\
	&=h(\bar{x}_k)-\frac{\mathbf{u}^\intercal \mathbf{v}\alpha_{k}}{n}\|\nabla h(\bar{x}_k)\|^2+\frac{L}{2}\left\|\alpha_{k}\left(\frac{\mathbf{u}^\intercal}{n}\otimes\mathbf{I}_{d}\right)\mathbf{y}_k\right\|^2\notag\\
	\label{grad-bound-0}&\quad+\left\langle \nabla h(\bar{x}_k),\alpha_{k}\left(\frac{\mathbf{u}^\intercal \mathbf{v}}{n}\nabla h(\bar{x}_k)-\left(\frac{\mathbf{u}^\intercal}{n}\otimes\mathbf{I}_{d}\right)\left(\mathbf{y}_k^{'}+\xi_k\right)\right)\right\rangle,
	\end{align*}
	where the second equality follows from the fact that
	\begin{equation*}\label{recur}
	\bar{x}_{k+1}=\bar{x}_{k}-\alpha_k\left(\frac{\mathbf{u}^\intercal}{n}\otimes\mathbf{I}_{d}\right)\mathbf{y}_k=\bar{x}_{k}-\alpha_k\left(\frac{\mathbf{u}^\intercal}{n}\otimes\mathbf{I}_{d}\right)\left(\mathbf{y}_k^{'}+\xi_k\right).
	\end{equation*}
	Take expectation on both sides of above inequality,
	\begin{align}
	\mathbb{E}\left[h(\bar{x}_{k+1})\right]
	&\le \mathbb{E}\left[h(\bar{x}_k)\right]-\frac{\mathbf{u}^\intercal \mathbf{v}\alpha_{k}}{n}\mathbb{E}\left[\|\nabla h(\bar{x}_k)\|^2\right]+\frac{L}{2}\mathbb{E}\left[\left\|\alpha_{k}\left(\frac{\mathbf{u}^\intercal}{n}\otimes\mathbf{I}_{d}\right)\mathbf{y}_k\right\|^2\right]\notag\\
	\label{grad-bound-1}&\quad+\mathbb{E}\left[\left\langle \nabla h(\bar{x}_k),\alpha_{k}\left(\frac{\mathbf{u}^\intercal \mathbf{v}}{n}\nabla h(\bar{x}_k)-\left(\frac{\mathbf{u}^\intercal}{n}\otimes\mathbf{I}_{d}\right)\left(\mathbf{y}_k^{'}+\xi_k\right)\right)\right\rangle\right].
	\end{align}
	
	For the third term on the right hand of (\ref{grad-bound-1}),
	\begin{align}\label{grad-bound-2}
	\frac{L}{2}\mathbb{E}\left[\left\|\alpha_{k}\left(\frac{\mathbf{u}^\intercal}{n}\otimes\mathbf{I}_{d}\right)\mathbf{y}_k\right\|^2\right]\le \frac{L\alpha_{k}^2\|\mathbf{u}\|^2}{2n^2}\mathbb{E}\left[\left\|\mathbf{y}_k\right\|^2\right]\le \frac{L\|\mathbf{u}\|^2c_b^2\ C_fC_g}{2n^2(1-\tau_\Z)^2}\alpha_{k}^2,
	\end{align}
	where  the second inequalities follows from (\ref{y-bound}).

For the fourth term on the right hand of (\ref{grad-bound-1}),
	\begin{align}
	&\mathbb{E}\left[\left\langle \nabla h(\bar{x}_k),\alpha_{k}\left(\frac{\mathbf{u}^\intercal \mathbf{v}}{n}\nabla h(\bar{x}_k)-\left(\frac{\mathbf{u}^\intercal}{n}\otimes\mathbf{I}_{d}\right)\left(\mathbf{y}_k^{'}+\xi_k\right)\right)\right\rangle\right]\notag\\
	&\le \frac{\alpha_k^2}{2\tau}\mathbb{E}\left[\|\nabla h(\bar{x}_k)\|^2\right]+\frac{3\tau}{2}\left(\frac{\mathbf{u}^\intercal \mathbf{v}}{n}\right)^2\mathbb{E}\left[\|P_1\|^2\right]+\frac{3\tau}{2}\left(\frac{\mathbf{u}^\intercal \mathbf{v}}{n}\right)^2\mathbb{E}\left[\|P_2\|^2\right]\notag\\
	&\quad+\frac{3\tau\|\mathbf{u}\|^2}{2n^2}\mathbb{E}\left[\|\mathbf{v}\otimes\bar{y}_k^{'}-\mathbf{y}_k^{'}\|^2\right]+\frac{\alpha_{k}\|\mathbf{u}\|}{n}\left|\mathbb{E}\left[\left\langle \nabla h(\bar{x}_k),-\alpha_{k}\left(\frac{\mathbf{u}^\intercal}{n}\otimes\mathbf{I}_{d}\right)\xi_k\right\rangle\right]\right|\notag\\
	&\le \frac{\alpha_k^2}{2\tau}\mathbb{E}\left[\|\nabla h(\bar{x}_k)\|^2\right]+\frac{3\tau L^2}{2n}\left(\frac{\mathbf{u}^\intercal \mathbf{v}}{n}\right)^2\mathbb{E}\left[\|\mathbf{x}_k-\mathbf{1}\otimes\bar{x}_k\|^2\right]+\frac{3\tau C_gL_f^2}{2n}\left(\frac{\mathbf{u}^\intercal \mathbf{v}}{n}\right)^2\mathbb{E}\left[\|\mathbf{g}_k-\mathbf{z}_k\|^2\right]\notag\\
	&\quad+\frac{3\tau\|\mathbf{u}\|^2}{2n^2}\mathbb{E}\left[\|\mathbf{v}\otimes\bar{y}_k^{'}-\mathbf{y}_k^{'}\|^2\right]
	+\frac{\alpha_{k}\|\mathbf{u}\|}{n}\left|\mathbb{E}\left[\left\langle \nabla h(\bar{x}_k),-\alpha_{k}\left(\frac{\mathbf{u}^\intercal}{n}\otimes\mathbf{I}_{d}\right)\xi_k\right\rangle\right]\right|\notag\\
	\label{grad-bound-3}&\le \frac{\alpha_k^2}{2\tau}\mathbb{E}\left[\|\nabla h(\bar{x}_k)\|^2\right]+\frac{3\tau L^2U_1}{2n}\alpha_k^2+\frac{3\tau C_gL_f^2}{2n}\mathbb{E}\left[\|\mathbf{g}_k-\mathbf{z}_k\|^2\right]+\frac{3\tau\|\mathbf{u}\|^2U_1}{2n^2}\alpha_k^2+U_3\alpha_k^2,
	\end{align}
	where  $P_1=\nabla h(\bar{x}_k)-\frac{1}{n}\sum_{j=1}^n\nabla g_j(x_{j,k})\nabla f_j(g_j(x_{j,k}))$, $P_2=\frac{1}{n}\sum_{j=1}^n\nabla g_j(x_{j,k})\nabla f_j(g_j(x_{j,k}))-\bar{y}_k^{'}$
	and $\tau$ can be any positive scalar, the first inequality follows from Cauchy-Schwartz inequality and the fact $ab\le \frac{1}{2\tau}a^2+\frac{\tau}{2}b^2$, the second inequality follows from the Lipschitz continuity of $\nabla g_j(\cdot)\nabla f_j(g_j(\cdot))$, Assumption \ref{ass-objective} and the fact $\bar{y}_k^{'}=\frac{1}{n}\sum_{j=1}^n\nabla g_j(x_{j,k})\nabla f_j(z_{j,k})$, the third inequality follows from  (\ref{x-y-consensus}), the fact $\mathbf{u}^\intercal \mathbf{v}\le n$ and Lemma  \ref{lem:nonconv} (ii).

Plug (\ref{grad-bound-2})-(\ref{grad-bound-3}) into (\ref{grad-bound-1}) and set $\tau=\frac{2\alpha_k}{3}$,
	\begin{align}
	\mathbb{E}\left[h(\bar{x}_{k+1})\right]
	&\le \mathbb{E}\left[h(\bar{x}_k)\right]-\alpha_{k}\left(1-\frac{\alpha_k}{2\tau}\right)\mathbb{E}\left[\|\nabla h(\bar{x}_k)\|^2\right]+\frac{L\|\mathbf{u}\|^2c_b^2\ C_fC_g}{2n^2(1-\tau_\Z)^2}\alpha_{k}^2+\frac{3\tau L^2U_1}{2n}\alpha_k^2\notag\\
	&\quad+\frac{3\tau C_gL_f^2}{2n}\mathbb{E}\left[\|\mathbf{g}_k-\mathbf{z}_k\|^2\right]+\frac{3\tau\|\mathbf{u}\|^2U_1}{2n^2}\alpha_k^2+U_3\alpha_k^2\notag\\
	\label{grad-bound-4}&\le \mathbb{E}\left[h(\bar{x}_k)\right]-\frac{\alpha_k}{4}\mathbb{E}\left[\|\nabla h(\bar{x}_k)\|^2\right]+\left(\frac{L\|\mathbf{u}\|^2c_b^2\ C_fC_g}{2n^2(1-\tau_\Z)^2}+U_3\right)\alpha_{k}^2\notag\\
	&\quad+\left(\frac{L^2U_1}{n}+\frac{\|\mathbf{u}\|^2U_1}{n^2}\right)\alpha_k^3+\beta_k\mathbb{E}\left[\|\mathbf{g}_k-\mathbf{z}_k\|^2\right].
	\end{align}

	Combining (\ref{z-convergence}) with (\ref{grad-bound-4}),
	{\small\begin{align}
	&\mathbb{E}\left[h(\bar{x}_{k+1})\right]+\mathbb{E}\left[\|\mathbf{z}_{k+1}- \mathbf{g}_{k+1}\|^2\right]\notag\\
	&\le \mathbb{E}\left[h(\bar{x}_k)\right]+\left[(1-\beta_k)^2+\beta_k\right]\mathbb{E}\left[\|\mathbf{z}_k-\mathbf{g}_k\|^2\right]-\frac{\alpha_k}{4}\mathbb{E}\left[\|\nabla h(\bar{x}_k)\|^2\right]+\left(\frac{L\|\mathbf{u}\|^2c_b^2\ C_fC_g}{2n^2(1-\tau_\Z)^2}+U_3\right)\alpha_{k}^2\notag\\
	 &\quad+\left(\frac{L^2U_1}{n}+\frac{\|\mathbf{u}\|^2U_1}{n^2}\right)\alpha_k^3+\left(12C_g\X^2\normm{\tilde{\mathbf{A}}-\mathbf{I}_{nd}}^2U_1+\frac{12c_b^2nC_g^2C_f\normm{\mathbf{A}}^2}{(1-\tau_\Z)^2}\right)\alpha_k^2+3V_g\beta_k^2\notag\\
	\label{grad-bound-5}&\le \mathbb{E}\left[h(\bar{x}_k)\right]+\mathbb{E}\left[\|\mathbf{z}_k-\mathbf{g}_k\|^2\right]-\frac{\alpha_k}{4}\mathbb{E}\left[\|\nabla h(\bar{x}_k)\|^2\right]+U_4\alpha_{k}^2+\left(\frac{L^2U_1}{n}+\frac{\|\mathbf{u}\|^2U_1}{n^2}\right)\alpha_k^3,
	\end{align}}
	where
	\begin{equation*}
	U_4=\frac{L\|\mathbf{u}\|^2c_b^2\ C_fC_g}{2n^2(1-\tau_\Z)^2}+U_3+12C_g\X^2\normm{\tilde{\mathbf{A}}-\mathbf{I}_{nd}}^2U_1+\frac{12c_b^2nC_g^2C_f\normm{\mathbf{A}}^2}{(1-\tau_\Z)^2}+3V_g\frac{ C_g^2L_f^4}{n^2}.
	\end{equation*}
	Reordering the terms of (\ref{grad-bound-5}) and summing over $k$ from 1 to $K$,
	\begin{align*}
	\sum_{k=1}^K\frac{\alpha_k}{4}\mathbb{E}\left[\|\nabla h(\bar{x}_k)\|^2\right]
    &\le \mathbb{E}\left[h(\bar{x}_1)\right]+\mathbb{E}\left[\|\mathbf{z}_1-\mathbf{g}_1\|^2\right]-\left(\mathbb{E}\left[h(\bar{x}_{K+1})\right]+\mathbb{E}\left[\|\mathbf{z}_{K+1}- \mathbf{g}_{K+1}\|^2\right]\right)\\
    &\quad+U_4\sum_{k=1}^K\alpha_{k}^2+\left(\frac{L^2U_1}{n}+\frac{\|\mathbf{u}\|^2U_1}{n^2}\right)\sum_{k=1}^K\alpha_k^3\\
    &\le \mathbb{E}\left[h(\bar{x}_1)\right]+\mathbb{E}\left[\|\mathbf{z}_1-\mathbf{g}_1\|^2\right]+U_4\sum_{k=1}^K\alpha_{k}^2+\left(\frac{L^2U_1}{n}+\frac{\|\mathbf{u}\|^2U_1}{n^2}\right)\sum_{k=1}^K\alpha_k^3.
	\end{align*}
	Multiplying both sides of the above inequality by $\frac{4}{a\sqrt{K}}$,
	\begin{align*}
	&\frac{1}{K}\sum_{k=1}^K\mathbb{E}\left[\|\nabla h(\bar{x}_k)\|^2\right]\le \frac{4\left(\mathbb{E}\left[h(\bar{x}_1)\right]+\mathbb{E}\left[\|\mathbf{z}_1-\mathbf{g}_1\|^2\right]\right)/a+4a U_4}{\sqrt{K}}+\frac{4\left(\frac{L^2U_1}{n}+\frac{\|\mathbf{u}\|^2U_1}{n^2}\right)a^2}{K}.
	\end{align*}
	By the Lipschitz continuity of $\nabla h(\cdot)$, we have
	{\small\begin{align*}
   \frac{1}{K}\sum_{k=1}^K\mathbb{E}\left[\|\nabla h(x_{i,k})\|^2\right]
	&\le  \frac{2}{K}\sum_{k=1}^K\mathbb{E}\left[\|\nabla h(\bar{x}_k)\|^2\right]+\frac{2L^2}{K}\sum_{k=1}^K\mathbb{E}\left[\|x_{i,k}-\bar{x}_k\|^2\right]\\
	&\le  \frac{8\left(\mathbb{E}\left[h(\bar{x}_1)\right]+\mathbb{E}\left[\|\mathbf{z}_1-\mathbf{g}_1\|^2\right]\right)/a+8a U_4}{\sqrt{K}}+\frac{2\left(\frac{4L^2U_1}{n}+\frac{4\|\mathbf{u}\|^2U_1}{n^2}+L^2U_1\right)a^2}{K},
	\end{align*}}
	where the last inequality follows from (\ref{x-y-consensus}). The proof is complete.
\end{proof}
Theorem \ref{thm:nonconv} presents that the AB-DSCSC achieves the convergence rate $\mathcal{O}(K^{-1/2})$   finding the ($\epsilon$)-stationary point,  which is same as the convergence rate of stochastic gradient descent for non-compositional problems.
On the other hand,  the sample complexity  for  finding the ($\epsilon$)-stationary point is  $\mathcal{O}\left(\frac{1}{\epsilon^2}\right)$  as AB-DSCSC  does not need the increasing batch size strategy in each iteration.

Next, we study the convergence rate of AB-DSCSC for the   strongly convex objective under diminishing stepsize strategy.
\begin{thm}\label{thm:rate}
Let $\alpha_k=a/(k+b)^\alpha,~\beta_k=\beta\alpha_k$, where $a>0,b\ge0$, $\alpha\in (1/2, 1)$, $\beta\in (0, 1/a)$ and $a/(1+b)^\alpha\le \frac{n}{\mathbf{u}^\intercal \mathbf{v}\mu}\min\{1,2/(C_g^2L_f + C_fL_g)\}$. Under Assumptions \ref{ass-objective}-\ref{ass:matrix} and the condition that objective function $h(x)$ is $\mu$-strongly convex,
\begin{equation*}
\mathbb{E}\left[\|\bar{x}_k-x^*\|^2\right]=\mathcal{O}\left(\alpha_k\right).
\end{equation*}
Moreover, if $\alpha_{k}=a/(k+b)$, $\beta_k=\beta\alpha_{k}$,
$\frac{2n}{\mathbf{u}^\intercal \mathbf{v}\mu}<a\le \frac{n(b+1)}{\mathbf{u}^\intercal \mathbf{v}\mu}\min\left\{1,2/(C_g^2L_f + C_fL_g)\right\}$ and $1<\beta a\le 1+b$,
$$\mathbb{E}\left[\|\bar{x}_k-x^*\|^2\right]=\mathcal{O}\left(\frac{1}{k}\right).$$
\end{thm}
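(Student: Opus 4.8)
The plan is to control two coupled quantities simultaneously: the optimality gap $a_k\define\mathbb{E}\left[\|\bar{x}_k-x^*\|^2\right]$ and the inner-function estimation error $e_k\define\mathbb{E}\left[\|\mathbf{z}_k-\mathbf{g}_k\|^2\right]$, and reduce them to a single scalar recursion of perturbed-contraction type. First I would start from the averaged dynamics $\bar{x}_{k+1}=\bar{x}_k-\alpha_k\left(\frac{\mathbf{u}^\intercal}{n}\otimes\mathbf{I}_d\right)(\mathbf{y}_k'+\xi_k)$, where $\mathbf{y}_k'$ is the auxiliary tracker of (\ref{g-tra}) and $\xi_k$ is the noise (\ref{noise}), and expand $\|\bar{x}_{k+1}-x^*\|^2$. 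Taking expectations, the leading inner-product term is $-2\alpha_k\frac{\mathbf{u}^\intercal\mathbf{v}}{n}\mathbb{E}\left[\langle\bar{x}_k-x^*,\nabla h(\bar{x}_k)\rangle\right]$, which by $\mu$-strong convexity together with $\nabla h(x^*)=\mathbf{0}$ is at most $-2\mu\frac{\mathbf{u}^\intercal\mathbf{v}}{n}\alpha_k a_k$; this is the contraction driving the rate. The quadratic term $\alpha_k^2\|(\frac{\mathbf{u}^\intercal}{n}\otimes\mathbf{I}_d)\mathbf{y}_k\|^2$ is $\mathcal{O}(\alpha_k^2)$ by (\ref{y-bound}).

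Next I would bound the perturbations, exactly paralleling the decomposition of $\bar{y}_k'$ used in the proof of Theorem \ref{thm:nonconv}. Replacing $(\frac{\mathbf{u}^\intercal}{n}\otimes\mathbf{I}_d)\mathbf{y}_k'$ by $\frac{\mathbf{u}^\intercal\mathbf{v}}{n}\nabla h(\bar{x}_k)$ produces (i) the consensus and gradient-consensus errors $\|\mathbf{x}_k-\mathbf{1}\otimes\bar{x}_k\|$ and $\|\mathbf{y}_k'-\mathbf{v}\otimes\bar{y}_k'\|$, both $\mathcal{O}(\alpha_k^2)$ by (\ref{x-y-consensus}), and (ii) the inner-function bias, which enters through $\|\nabla f_j(z_{j,k})-\nabla f_j(g_j(x_{j,k}))\|\le L_f\|z_{j,k}-g_j(x_{j,k})\|$ and hence yields a genuine coupling with $e_k$. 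I would split this coupling by Young's inequality, absorbing an $\varepsilon\alpha_k a_k$ piece into the strong-convexity descent (with $\varepsilon$ small relative to $\mu\frac{\mathbf{u}^\intercal\mathbf{v}}{n}$) and retaining an $\mathcal{O}(\alpha_k)e_k$ piece. The delicate term is $\langle\bar{x}_k-x^*,(\frac{\mathbf{u}^\intercal}{n}\otimes\mathbf{I}_d)\xi_k\rangle$: since $\xi_k$ is not a martingale difference, I would mimic the telescoping argument of Lemma \ref{lem:nonconv}(ii), writing $\bar{x}_k-x^*=(\bar{x}_t-x^*)+\sum_{l=t+1}^k(\bar{x}_l-\bar{x}_{l-1})$ so that the $\mathcal{F}_t$-measurable part annihilates against the martingale increments $\epsilon_t$ while only the $\mathcal{O}(\alpha_l)$ increments survive, giving an $\mathcal{O}(\alpha_k^2)$ contribution. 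Collecting terms and appending Lemma \ref{lem:g-tra} (with $\beta_k=\beta\alpha_k$) gives the coupled system
\begin{equation*}
a_{k+1}\le\left(1-c_a\alpha_k\right)a_k+d\,\alpha_k e_k+\mathcal{O}(\alpha_k^2),\qquad e_{k+1}\le(1-\beta_k)^2 e_k+\mathcal{O}(\alpha_k^2),
\end{equation*}
where $c_a$ is a positive multiple of $\mu\mathbf{u}^\intercal\mathbf{v}/n$ and $d>0$.

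To decouple, I would use the Lyapunov function $V_k\define a_k+\lambda e_k$. Summing the two recursions, the coefficient of $e_k$ becomes $d\alpha_k+\lambda(1-2\beta\alpha_k+\beta^2\alpha_k^2)$, and choosing $\lambda$ large enough that $2\lambda\beta>d$ renders it at most $\lambda(1-c_e\alpha_k)$ for some $c_e>0$, so that
\begin{equation*}
V_{k+1}\le\left(1-c\alpha_k\right)V_k+C\alpha_k^2,\qquad c=\min\{c_a-\varepsilon,\,c_e\}>0,
\end{equation*}
with the upper bounds imposed on $a/(1+b)^\alpha$ ensuring $1-c\alpha_k\in(0,1)$ for all $k$. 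The conclusion then follows from a standard scalar lemma. For $\alpha\in(1/2,1)$ the step sizes obey $\alpha_k\to0$, $\sum_k\alpha_k=\infty$ and $(\alpha_k-\alpha_{k+1})/\alpha_k^2\to0$, which forces $V_k=\mathcal{O}(\alpha_k)$; for $\alpha=1$ the ratio condition fails, and one instead needs $c\,a>1$, which is precisely what the hypotheses $\frac{2n}{\mathbf{u}^\intercal\mathbf{v}\mu}<a$ and $1<\beta a$ secure, yielding $V_k=\mathcal{O}(1/k)$. Since $a_k\le V_k$, the claimed rates for $\mathbb{E}\left[\|\bar{x}_k-x^*\|^2\right]$ follow.

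The main obstacle is not the scalar recursion itself but assembling the coupled system correctly: controlling the non-martingale correction $\xi_k$ against $\bar{x}_k-x^*$ (the strong-convexity analogue of Lemma \ref{lem:nonconv}(ii)), and selecting the Lyapunov weight $\lambda$ so that the inner-function error contracts simultaneously with the optimality gap while keeping every constant compatible with the stepsize restrictions that guarantee $c\,a>1$ in the critical case $\alpha=1$.
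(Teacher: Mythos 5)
Your proposal is correct in substance and shares the paper's skeleton: the averaged recursion for $\bar{x}_{k+1}$, the strong-convexity contraction, the perturbation split into consensus errors, inner-function bias, and the non-martingale noise $\xi_k$, which you handle by telescoping against the martingale increments $\epsilon_t$ --- exactly the mechanism of the paper's Lemma \ref{lem:noi-bound}, except that the paper telescopes all the way back to $\bar{x}_1$ rather than to $\bar{x}_t$; both yield the same $\mathcal{O}(\alpha_k^2)$ contribution. Where you genuinely diverge is the decoupling step: the paper never forms a Lyapunov function $V_k = a_k + \lambda e_k$. It instead exploits the fact that the recursion for your $e_k = \mathbb{E}\left[\|\mathbf{z}_k-\mathbf{g}_k\|^2\right]$ in Lemma \ref{lem:g-tra} is entirely self-contained --- its perturbation is $\mathcal{O}(\alpha_k^2)$ with constants coming from (\ref{y-bound}) and (\ref{x-y-consensus}), and contains no $a_k$, because $\mathbb{E}\left[\|\mathbf{y}_k\|^2\right]$ is uniformly bounded under Assumption \ref{ass-objective}(c). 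So the paper applies Polyak's scalar lemma to that recursion \emph{first}, obtaining $e_k \le U_2\beta\alpha_k$ (display (\ref{z-bound-2})), and then substitutes this bound into the $a_k$-recursion, where, after a Young split with the $\alpha_k$-proportional parameter $\tau_1 = \frac{\mathbf{u}^\intercal \mathbf{v}\mu}{4n}\alpha_k$, every coupling term becomes a pure $\mathcal{O}(\alpha_k^2)$ perturbation; a second application of the same scalar lemma finishes. Your simultaneous (Lyapunov) decoupling is valid because the coupling is one-directional anyway, and it buys generality --- it would survive even if the $e_k$-recursion fed back on $a_k$ --- but it costs extra bookkeeping precisely where the theorem is most delicate: in the critical case $\alpha_k = a/(k+b)$ your combined contraction constant is $c = \min\{c_a - \varepsilon,\, \beta - d/\lambda\}$ and you must check $ca>1$, which forces $\lambda > da/(\beta a - 1)$ (possible only because the hypothesis $\beta a > 1$ is strict) together with $\varepsilon$ small enough that $(c_a-\varepsilon)a>1$ still holds. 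The paper's sequential route needs only the two separate conditions $\frac{\mathbf{u}^\intercal \mathbf{v}\mu}{2n}a>1$ and $2\beta a>1$, each immediate from the stated hypotheses. One detail you should make explicit when writing this up: the upper bound $\beta a \le 1+b$ in the hypotheses is what guarantees $\beta_k\le 1$ for all $k$, which Lemma \ref{lem:g-tra} requires.
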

\begin{proof}
Recall the definition $\bar{x}_{k+1}= \left(\frac{\mathbf{u}^\intercal}{n}\otimes\mathbf{I}_{d}\right)\mathbf{x}_{k+1}$ in Lemma \ref{lem:rate},
\begin{align}
\bar{x}_{k+1}
&=\left(\frac{\mathbf{u}^\intercal}{n}\otimes\mathbf{I}_{d}\right)\tilde{\mathbf{A}}\left(\mathbf{x}_k-\alpha_k\mathbf{y}_k\right)\notag\\
&=\bar{x}_{k}-\alpha_k\left(\frac{\mathbf{u}^\intercal}{n}\otimes\mathbf{I}_{d}\right)\left(\mathbf{y}_k^{'}+\xi_k\right)\notag\\
&=\bar{x}_{k}-\frac{\mathbf{u}^\intercal \mathbf{v}\alpha_{k}}{n}\nabla h(\bar{x}_k)+\frac{\mathbf{u}^\intercal \mathbf{v}\alpha_{k}}{n}\Bigg(\underbrace{\nabla h(\bar{x}_k)-\frac{1}{n}\sum_{j=1}^n\nabla g_j(x_{j,k})\nabla f_j(g_j(x_{j,k}))}_{P^{(1)}_k}\notag\\
&\quad+\underbrace{\frac{1}{n}\sum_{j=1}^n\nabla g_j(x_{j,k})\nabla f_j(g_j(x_{j,k}))-\bar{y}^{'}_k}_{P^{(2)}_k}+\underbrace{\frac{n}{\mathbf{u}^\intercal \mathbf{v}}\left(\frac{\mathbf{u}^\intercal}{n}\otimes\mathbf{I}_{d}\right)\left(\mathbf{v}\otimes\bar{y}^{'}_k-\mathbf{y}_k^{'}\right)}_{P^{(3)}_k}\notag\\
\label{consensus-new-form}&\quad+\underbrace{\left(-\frac{n}{\mathbf{u}^\intercal \mathbf{v}}\right)\left(\frac{\mathbf{u}^\intercal}{n}\otimes\mathbf{I}_{d}\right)\xi_k}_{P^{(4)}_k}\Bigg),
\end{align}
where $\mathbf{y}_k^{'}$ and $\xi_{k+1}$ are defined in (\ref{g-tra}) and (\ref{noise}), the second equality follows from the fact $\mathbf{u}^\intercal\mathbf{A}=\mathbf{1}$. Subsequently,
	\begin{align}
	&\mathbb{E}\left[\|\bar{x}_{k+1}-x^*\|^2\right]\notag\\
	&=\mathbb{E}\left[\left\|\bar{x}_{k}-x^*-\frac{\mathbf{u}^\intercal \mathbf{v}\alpha_{k}}{n}\nabla h(\bar{x}_k)\right\|^2\right]+\left(\frac{\mathbf{u}^\intercal \mathbf{v}\alpha_{k}}{n}\right)^2\mathbb{E}\left[\left\|P^{(1)}_k+P^{(2)}_k+P^{(3)}_k+P^{(4)}_k\right\|^2\right]\notag\\
	&\quad+2\left(\frac{\mathbf{u}^\intercal \mathbf{v}\alpha_{k}}{n}\right)\mathbb{E}\left[\left\langle \bar{x}_{k}-x^*-\frac{\mathbf{u}^\intercal \mathbf{v}\alpha_{k}}{n}\nabla h(\bar{x}_k), P^{(1)}_k+P^{(2)}_k+P^{(3)}_k+P^{(4)}_k\right\rangle\right]\notag\\
	&\le\left(1-\frac{\mathbf{u}^\intercal \mathbf{v}\mu\alpha_{k}}{n}\right)^2\mathbb{E}\left[\left\|\bar{x}_{k}-x^*\right\|^2\right]+\left(\frac{\mathbf{u}^\intercal \mathbf{v}\alpha_{k}}{n}\right)^2\mathbb{E}\left[\left\|P^{(1)}_k+P^{(2)}_k+P^{(3)}_k+P^{(4)}_k\right\|^2\right]\notag\\
	&\quad+2\left(\frac{\mathbf{u}^\intercal \mathbf{v}\alpha_{k}}{n}\right)\mathbb{E}\left[\left\langle \bar{x}_{k}-x^*-\frac{\mathbf{u}^\intercal \mathbf{v}\alpha_{k}}{n}\nabla h(\bar{x}_k), P^{(1)}_k+P^{(2)}_k+P^{(3)}_k+P^{(4)}_k\right\rangle\right]\notag\\
	&\le\left(\left(1-\frac{\mathbf{u}^\intercal \mathbf{v}\mu\alpha_{k}}{n}\right)^2+\frac{\tau}{2}\left(\frac{\mathbf{u}^\intercal \mathbf{v}\alpha_{k}}{n}\right)^2L^2 \right)\mathbb{E}\left[\left\|\bar{x}_{k}-x^*\right\|^2\right]\notag\\
	&\quad+\left(1+\frac{1}{2\tau}\right)\left(\frac{\mathbf{u}^\intercal \mathbf{v}\alpha_{k}}{n}\right)^2\mathbb{E}\left[\left\|P^{(1)}_k+P^{(2)}_k+P^{(3)}_k+P^{(4)}_k\right\|^2\right]\notag\\
	\label{eq-auxi-3}&\quad+2\left(\frac{\mathbf{u}^\intercal \mathbf{v}\alpha_{k}}{n}\right)\mathbb{E}\left[\left\langle \bar{x}_{k}-x^*,P^{(1)}_k+P^{(2)}_k+P^{(3)}_k+P^{(4)}_k\right\rangle\right],
	\end{align}
where $\tau$ is any positive scalar, the first inequality follows from \cite[Lemm 10]{qu2017harnessing}, the second inequalities follows from the inequality $ab\le \frac{\tau a^2}{2}+\frac{b^2}{2\tau}$ and the fact that $\nabla h(x)$ is $L(\define C_g^2L_f + C_fL_g)$-smooth.

 For the second term on the right hand side of (\ref{eq-auxi-3}),
{\small
\begin{align}
&\left(1+\frac{1}{2\tau}\right)\left(\frac{\mathbf{u}^\intercal \mathbf{v}\alpha_{k}}{n}\right)^2\mathbb{E}\left[\left\|P^{(1)}_k+P^{(2)}_k+P^{(3)}_k+P^{(4)}_k\right\|^2\right]\notag\\
&\le \left(1+\frac{1}{2\tau}\right)\left(4\left(\frac{\mathbf{u}^\intercal \mathbf{v}\alpha_{k}}{n}\right)^2\frac{L^2\X^2}{n}\mathbb{E}\left[\|x_{k}-\mathbf{1}\otimes\bar{x}_{k}\|_\A^2\right]+4\left(\frac{\mathbf{u}^\intercal \mathbf{v}\alpha_{k}}{n}\right)^2\frac{C_g^2L_f^2}{n}\mathbb{E}\left[\left\|\mathbf{g}_k-\mathbf{z}_k\right\|^2\right]\right.\notag\\
&\quad\left.+4\alpha_k^2\frac{\|\mathbf{u}\|^2}{n^2}\X^2\mathbb{E}\left[\left\|\mathbf{y}_k^{'}-\mathbf{v}\otimes\bar{y}^{'}_k\right\|_\Z^2\right]+4\frac{\|\mathbf{u}\|^2}{n^2}\alpha_k^2\mathbb{E}\left[\left\|\xi_k\right\|^2\right]\right)\notag\\
&\le \left(1+\frac{1}{2\tau}\right)\left(4\left(\frac{\mathbf{u}^\intercal \mathbf{v}\alpha_{k}}{n}\right)^2\frac{L^2\X^2U_1\alpha_k^2}{n}+4\left(\frac{\mathbf{u}^\intercal \mathbf{v}\alpha_{k}}{n}\right)^2\frac{C_g^2L_f^2}{n}\mathbb{E}\left[\left\|\mathbf{g}_k-\mathbf{z}_k\right\|^2\right]+4\frac{\|\mathbf{u}\|^2}{n^2}\X^2U_1\alpha_k^3\right)\notag\\
\label{error-1}&\quad+\left(1+\frac{1}{2\tau}\right)4\frac{\|\mathbf{u}\|^2}{n^2}\frac{c_b^24n C_fC_g}{(1-\tau_\Z)^2}\alpha_k^2,
\end{align}
}
where $c_b=\max\left\{\X,\frac{\normm{\mathbf{B}-\mathbf{I}_{n}}_\Z}{\tau_\Z}\X\right\}$, the first inequality follows from Assumption \ref{ass-objective}(c) and the Lipschitz continuity of  $\nabla f_j(\cdot)$, the second inequality follows from (\ref{x-y-consensus}) and Lemma \ref{lem:nonconv}. In addition,  by Lemma \ref{lem:g-tra} and \cite[Lemmas 4-5 in Chapter 2]{polyak1987Introduction}, there exists a constant $U_2$ such that
\begin{equation}\label{z-bound-2}
\mathbb{E}\left[\left\|\mathbf{g}_k-\mathbf{z}_k\right\|^2\right]\le U_2\beta_k=U_2\beta \alpha_k.
\end{equation}

Combining (\ref{error-1}) with (\ref{z-bound-2}), we have
{\small
	\begin{align}
	&\left(1+\frac{1}{2\tau}\right)\left(\frac{\mathbf{u}^\intercal \mathbf{v}\alpha_{k}}{n}\right)^2\mathbb{E}\left[\left\|P^{(1)}_k+P^{(2)}_k+P^{(3)}_k+P^{(4)}_k\right\|^2\right]\notag\\
	&\le \left(1+\frac{1}{2\tau}\right)\left(4\left(\frac{\mathbf{u}^\intercal \mathbf{v}\alpha_{k}}{n}\right)^2\frac{L^2\X^2U_1\alpha_k^2}{n}+4\left(\frac{\mathbf{u}^\intercal \mathbf{v}\alpha_{k}}{n}\right)^2\frac{C_g^2L_f^2}{n}U_2\beta\alpha_k+4\frac{\|\mathbf{u}\|^2}{n^2}\X^2U_1\alpha_k^3\right)\notag\\
	\label{error-3-0}&\quad+\left(1+\frac{1}{2\tau}\right)4\frac{\|\mathbf{u}\|^2}{n^2}\frac{c_b^24n C_fC_g}{(1-\tau_\Z)^2}\alpha_k^2\\
	\label{error-3}&\le 16\left(1+\frac{\left(C_g^2L_f + C_fL_g\right)}{\mu^2}\right)\frac{\|\mathbf{u}\|^2}{n}\frac{c_b^2 C_fC_g}{(1-\tau_\Z)^2}\alpha_k^2+o(\alpha_k^2),
	\end{align}}
where
\begin{equation}\label{tau}
\tau=\frac{\mu^2}{2\left(C_g^2L_f + C_fL_g\right)}.
\end{equation}

For the third term on the right hand side of (\ref{eq-auxi-3}),
\begin{align}
&2\left(\frac{\mathbf{u}^\intercal \mathbf{v}\alpha_{k}}{n}\right)\mathbb{E}\left[\left\langle \bar{x}_{k}-x^*,P^{(1)}_k+P^{(2)}_k+P^{(3)}_k+P^{(4)}_k\right\rangle\right]\notag\\
&\le \tau_1 \mathbb{E}\left[\left\|\bar{x}_{k}-x^*\right\|^2\right]+\frac{1}{\tau_1}\left(\frac{\mathbf{u}^\intercal \mathbf{v}\alpha_{k}}{n}\right)^2\mathbb{E}\left[\left\|P^{(1)}_k+P^{(2)}_k+P^{(3)}_k\right\|^2\right]+2\left(\frac{\mathbf{u}^\intercal \mathbf{v}\alpha_{k}}{n}\right)\mathbb{E}\left[\left\langle \bar{x}_{k}-x^*,P^{(4)}_k\right\rangle\right]\notag\\
&\le \tau_1 \mathbb{E}\left[\left\|\bar{x}_{k}-x^*\right\|^2\right]+\frac{12}{\tau_1}\left(\frac{\mathbf{u}^\intercal \mathbf{v}\alpha_{k}}{n}\right)^2\frac{L^2\X^2U_1\alpha_k^2}{n}+\frac{12}{\tau_1}\left(\frac{\mathbf{u}^\intercal \mathbf{v}\alpha_{k}}{n}\right)^2\frac{C_g^2L_f^2}{n}U_2\beta\alpha_k+\frac{12}{\tau_1}\frac{\|\mathbf{u}\|^2}{n^2}\X^2U_1\alpha_k^3\notag\\
&\quad+\frac{\|\mathbf{u}\|^2c_bc_0}{n^2(1-\tau_\Z)}\left(\frac{c_b^2nC_gC_f}{(1-\tau_\Z)^2}+4nC_g C_f\right)\alpha_k^3\notag\\
\label{error-2}&\le \frac{\mathbf{u}^\intercal \mathbf{v}\mu\alpha_k}{4n} \mathbb{E}\left[\left\|\bar{x}_{k}-x^*\right\|^2\right]+\left(\frac{48\mathbf{u}^\intercal \mathbf{v}C_g^2L_f^2\mu U_2\beta}{n^2}
+\frac{48\|\mathbf{u}\|^2\X^2U_1}{n\mathbf{u}^\intercal \mathbf{v}\mu}\right)\alpha_k^2+o(\alpha_k^2),
\end{align}
where $c_0$ is some constant scalar,
\begin{equation}\label{tau-1}
\tau_1=\frac{\mathbf{u}^\intercal \mathbf{v}\mu}{4n}\alpha_k,
\end{equation}
the first inequality follows from the fact $ab\le \frac{\tau_1 a^2}{2}+\frac{b^2}{2\tau_1}$ for any positive scalar $\tau_1$, the second inequality follows from (\ref{error-3-0}) and Lemma \ref{lem:noi-bound} in Appendix.

 Substitute (\ref{error-3})-(\ref{tau-1}) into (\ref{eq-auxi-3}),
\begin{align*}
\mathbb{E}\left[\|\bar{x}_{k+1}-x^*\|^2\right]
&\le \left(1-\frac{\mathbf{u}^\intercal \mathbf{v}\mu\alpha_{k}}{2n}\right)\mathbb{E}\left[\left\|\bar{x}_{k}-x^*\right\|^2\right]+o\left(\alpha_k^2\right)+\left[\frac{48\mathbf{u}^\intercal \mathbf{v}C_g^2L_f^2\mu U_2\beta}{n^2}
+\frac{48\|\mathbf{u}\|^2\X^2U_1}{n\mathbf{u}^\intercal \mathbf{v}\mu}\right.\notag\\
&\quad \left.+16\left(1+\frac{\left(C_g^2L_f + C_fL_g\right)}{\mu^2}\right)\frac{\|\mathbf{u}\|^2}{n}\frac{c_b^2 C_fC_g}{(1-\tau_\Z)^2}\right]\alpha_k^2.
\end{align*}
Then by \cite[Lemmas 4-5 in Chapter 2]{polyak1987Introduction}, $$\mathbb{E}\left[\|\bar{x}_{k+1}-x^*\|^2\right]=\mathcal{O}\left(\alpha_k\right)~ \text{if}~\alpha_k=a/(k+b)^\alpha,\alpha\in (1/2,1),$$
and
$$ \mathbb{E}\left[\|\bar{x}_k-x^*\|^2\right]=\mathcal{O}\left(\frac{1}{k}\right)~ \text{if}~\alpha_k=a/(k+b),a>\frac{2n}{\mathbf{u}^\intercal \mathbf{v}\mu}.$$
The proof is complete.
\end{proof}

Theorem \ref{thm:rate} shows that AB-DSCSC achieves the convergence rate $\mathcal{O}\left(\frac{1}{k}\right)$ for finding the optimal solution, which is also the optimal convergence rate of stochastic gradient descent for non-compositional stochastic strongly convex optimization \cite{Rakhlin2012making}.

The next theorem studies the asymptotic normality of AB-DSCSC.
\begin{thm}\label{thm:asym-norm}
Let stepsizes $\alpha_k=a/(k+b)^\alpha,~\beta_k=\beta\alpha_k$, where $a>0,b\ge0$, $\alpha\in (1/2, 1)$, $\beta\in (0, 1/a)$ and $a/(1+b)^\alpha\le \frac{n}{\mathbf{u}^\intercal \mathbf{v}\mu}\min\{1,2/(C_g^2L_f + C_fL_g)\}$. Suppose

\begin{itemize}
	\item[(a)] Assumptions \ref{ass-objective}-\ref{ass:matrix} hold;
	\item [(b)]$h(x)$ is $\mu$-strongly convex;
	\item [(c)]there exist scalar $C$ and matrix $\mathbf{H}$ such that
	\begin{equation*}\label{second order massage}
	\left\|\nabla h(x)-\frac{1}{n}\mathbf{H}(x-x^*)\right\|\le C\|x-x^*\|^{1+\gamma},\quad \forall x\in \mathbb{R}^{d},
	\end{equation*}
	where $\gamma\in (0,1]$ satisfies that $\sum_{k=1}^\infty\frac{\alpha_k^{(1+\gamma)/2}}{\sqrt{k}}<\infty$;
	\item [(d)] for any $i\in\mathcal{V}$, there exist scalar $C_i$ and matrix  $\mathbf{T}_i$ such that
	\begin{equation*}\label{second order massage-1}
	\left\|\nabla f_i(y)-\nabla f_i(y^{'})-\mathbf{T}_i\left(y-y^{'}\right)\right\|\le C_i\|y-y^{'}\|^{1+\gamma},\quad \forall y,y^{'}\in \mathbb{R}^p, 
	\end{equation*}
	\item [(e)] for any $i\in\mathcal{V}$, $G_i(\cdot;\phi)$ is Lipschitz continuous with coefficient $L_g^{'}$ , that is
	\begin{equation*}
	\left\|G_i(x;\phi)-G_i(x^{'};\phi)\right\|\le L_g^{'}\|x-x^{'}\|,\quad \forall y,y^{'}\in \mathbb{R}^p.
	\end{equation*}
\end{itemize}
Then for any $i\in\mathcal{V}$,
{\small\begin{equation}\label{limit distribution}
\frac{1}{\sqrt{k}}\sum_{t=1}^{k}
\left(
\begin{array}{c}
x_{i,t}-x^*\\
\frac{\sum\limits_{j=1}^{n}\nabla g_j(x^*)\mathbf{T}_j\left(z_{j,t}-g_j\left(x_{j,t}\right)\right)}{n}
\end{array}
\right)\stackrel{d}{\longrightarrow} N\left(\mathbf{0},\left(
\begin{array}{cc}
\mathbf{H}^{-1}\left(\mathbf{S}_1+\mathbf{S}_2\right)(\mathbf{H}^{-1})^\intercal& -\frac{1}{n}\mathbf{H}^{-1}\mathbf{S}_2\\
-\frac{1}{n}\mathbf{S}_2(\mathbf{H}^{-1})^\intercal& \frac{1}{n^2}\mathbf{S}_2
\end{array}
\right)\right), 
\end{equation}}
where $\mathbf{S}_1=\Cov\left(\nabla G_j(x^*;\phi_j)\nabla F_j(g(x^*);\zeta_j)\right)$, $\mathbf{S}_2=\Cov\left(\sum_{j=1}^n\nabla g_j(x^*)\mathbf{T}_jG_j(x^*;\phi_j)\right)$.
\end{thm}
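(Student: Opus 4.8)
The plan is to read \eqref{limit distribution} as a joint Polyak--Ruppert central limit theorem for the coupled slow variable $\bar{x}_k-x^*$ and the inner--tracking residual, and to reduce it to a martingale CLT after linearisation. First I would start from the averaged recursion \eqref{consensus-new-form} proved inside Theorem \ref{thm:rate}, linearise the drift by condition (c) as $\nabla h(\bar{x}_k)=\frac{1}{n}\mathbf{H}(\bar{x}_k-x^*)+O(\|\bar{x}_k-x^*\|^{1+\gamma})$, and use condition (d) together with $\nabla g_j(x_{j,k})\to\nabla g_j(x^*)$ to identify the dominant part of $P^{(2)}_k$ with (minus) the second coordinate in \eqref{limit distribution}, namely $-\frac1n\sum_j\nabla g_j(x^*)\mathbf{T}_j(z_{j,k}-g_j(x_{j,k}))$. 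The a priori rates $\mathbb{E}[\|\bar{x}_k-x^*\|^2]=\mathcal{O}(\alpha_k)$ from Theorem \ref{thm:rate} and $\mathbb{E}[\|\mathbf{z}_k-\mathbf{g}_k\|^2]=\mathcal{O}(\alpha_k)$ from \eqref{z-bound-2}, plus the consensus bounds \eqref{x-y-consensus}, are the inputs that let me discard lower--order contributions.

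Next I would carry out the summation-by-parts. Writing the linearised recursion as $\frac{\mathbf{u}^\intercal\mathbf{v}}{n^2}\mathbf{H}(\bar{x}_t-x^*)=\alpha_t^{-1}\bigl[(\bar{x}_t-x^*)-(\bar{x}_{t+1}-x^*)\bigr]+\frac{\mathbf{u}^\intercal\mathbf{v}}{n}\bigl(P^{(1)}_t+P^{(2)}_t+P^{(3)}_t\bigr)-(\tfrac{\mathbf{u}^\intercal}{n}\otimes\mathbf{I}_d)\xi_t+(\text{lin. error})$, summing over $t\le k$ and multiplying by $\frac{n^2}{\mathbf{u}^\intercal\mathbf{v}}\mathbf{H}^{-1}$ expresses $\sum_t(\bar{x}_t-x^*)$ explicitly. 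I would then show that the telescoping/Abel boundary terms are $o_p(\sqrt{k})$ since $\|\bar{x}_t-x^*\|=O_p(\sqrt{\alpha_t})$ and $\alpha\in(1/2,1)$; that the consensus terms $P^{(1)}_t,P^{(3)}_t$ are $O(\alpha_t)$ in $L^2$, so their scaled sum is $O(k^{1/2-\alpha})=o(1)$; and that the linearisation remainder is killed by the summability $\sum_k\alpha_k^{(1+\gamma)/2}/\sqrt{k}<\infty$ imposed in condition (c). Replacing $x_{i,t}$ by $\bar{x}_t$ only costs another negligible consensus term, so the first coordinate equals $\frac{n^2}{\mathbf{u}^\intercal\mathbf{v}}\mathbf{H}^{-1}$ times the two genuine noise accumulations, modulo $o_p(\sqrt{k})$.

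The heart of the argument is to convert those two non-martingale accumulations into martingale sums. For the gradient-tracking noise I would use $\xi_k=\sum_{t\le k}\tilde{\mathbf{B}}(k,t)\epsilon_t$ from \eqref{e-repre} together with the telescoping identity $\sum_{k\ge t}\tilde{\mathbf{B}}(k,t)=\frac{\mathbf{v}\mathbf{1}^\intercal}{n}\otimes\mathbf{I}_d$, which follows from $\bigl(\frac{\mathbf{v}\mathbf{1}^\intercal}{n}\otimes\mathbf{I}_d\bigr)(\tilde{\mathbf{B}}-\mathbf{I}_{nd})=\mathbf{0}$ and the geometric decay $\tau_\mathbf{B}<1$ of Lemma \ref{lem:norm}; interchanging summation order this reduces $\sum_t(\tfrac{\mathbf{u}^\intercal}{n}\otimes\mathbf{I}_d)\xi_t$ to $\frac{\mathbf{u}^\intercal\mathbf{v}}{n^2}\sum_s(\mathbf{1}^\intercal\otimes\mathbf{I}_d)\epsilon_s$ up to a boundary error that is $o_p(\sqrt{k})$, and $(\mathbf{1}^\intercal\otimes\mathbf{I}_d)\epsilon_s$ is a martingale-difference array whose conditional covariance converges, as $x_{j,s}\to x^*$ and $z_{j,s}\to g(x^*)$, to $\mathbf{S}_1$. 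For the inner residual I would expand its recursion \eqref{z-recur}: with $\psi_{j,t}:=G_j(x_{j,t};\phi_{j,t+1}')-g_j(x_{j,t})$, a parallel summation-by-parts turns $\sum_t e_{j,t}$ into $\sum_t\psi_{j,t}+o_p(\sqrt{k})$, so the second coordinate converges to the martingale sum whose covariance, after the weight $\frac1n\sum_j\nabla g_j(x^*)\mathbf{T}_j$, is $\frac{1}{n^2}\mathbf{S}_2$. Because $\{\phi'\}$ is drawn independently of $\{\phi,\zeta\}$, the two martingale arrays are asymptotically independent.

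Finally I would stack the two independent martingale-difference arrays and invoke a martingale CLT, whose Lindeberg and conditional-variance conditions are supplied by Assumption \ref{ass-objective}(c),(d) and (e). Denoting the limits $A\sim N(\mathbf{0},\mathbf{S}_1)$ and $B\sim N(\mathbf{0},\tfrac1{n^2}\mathbf{S}_2)$ with $A\perp B$, the reduction shows the first coordinate converges to $-n\mathbf{H}^{-1}B-\mathbf{H}^{-1}A$ and the second to $B$; then $\Cov(-n\mathbf{H}^{-1}B-\mathbf{H}^{-1}A)=\mathbf{H}^{-1}(\mathbf{S}_1+\mathbf{S}_2)(\mathbf{H}^{-1})^\intercal$, the cross-covariance is $-\frac1n\mathbf{H}^{-1}\mathbf{S}_2$, and $\Cov(B)=\frac1{n^2}\mathbf{S}_2$, reproducing the stated matrix. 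The hard part will be exactly this third step: $\xi_k$ and $\mathbf{z}_k-\mathbf{g}_k$ are filtered, non-martingale accumulations coupled back into the $x$-dynamics, and the delicate point is to prove that after summation only the instantaneous martingale differences survive (giving $\mathbf{S}_1$ and $\mathbf{S}_2$) while every filtering, consensus, and bias remainder is $o_p(\sqrt{k})$; the telescoping identity for $\tilde{\mathbf{B}}$ and the two-sided control of $e_{j,t}$ via condition (e) are the tools I expect to carry the most weight.
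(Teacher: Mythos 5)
Your overall architecture parallels the paper's proof: the paper likewise reduces to $\bar{x}_t$ via the consensus bound (\ref{x-y-consensus}) and Slutsky, linearises the drift through conditions (c)--(d), and isolates two fresh-noise martingale arrays (gradient noise giving $\mathbf{S}_1$, inner-function noise giving $\mathbf{S}_2$) whose asymptotic independence comes from $\phi'\perp(\phi,\zeta)$; your closing covariance algebra (first coordinate $-n\mathbf{H}^{-1}B-\mathbf{H}^{-1}A$, second coordinate $B$) reproduces the stated matrix exactly as the paper's $\mathbf{H}_{\theta}^{-1}$ computation does. Where you genuinely differ is the machinery: the paper stacks the two coordinates into a single $2d$-dimensional recursion with the triangular matrix $\mathbf{H}_{\theta}$ and invokes the Polyak--Juditsky operators $\mathbf{M}(k,t),\mathbf{N}(k,t)$, citing an external lemma for the $\xi$-reduction, whereas you run Abel summation coordinate by coordinate; your telescoping identity $\sum_{t=s}^{k}\tilde{\mathbf{B}}(t,s)=\tilde{\mathbf{B}}^{k-s}\to\frac{\mathbf{v}\mathbf{1}^\intercal}{n}\otimes\mathbf{I}_d$ is correct and gives a clean, self-contained martingale reduction of $\sum_t(\frac{\mathbf{u}^\intercal}{n}\otimes\mathbf{I}_{d})\xi_t$, and your boundary estimates ($O_p(\alpha_k^{-1/2})=o_p(\sqrt{k})$, $O(k^{1/2-\alpha})=o(1)$, Kronecker for the linearisation error) all check out.

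The gap is exactly where you predicted it, and the tools you name do not close it. Summation by parts on $e_{j,k+1}=(1-\beta_k)e_{j,k}+\nu_{j,k+1}$, with $e_{j,k}=z_{j,k}-g_j(x_{j,k})$, yields $\sum_{t\le k} e_{j,t}=(\text{Abel terms, negligible by (\ref{z-bound-2})})+\sum_{t\le k}\beta_t^{-1}\nu_{j,t+1}$, and writing $W_t(x):=G_j(x;\phi_{j,t+1}^{'})-g_j(x)$,
\begin{equation*}
\beta_t^{-1}\nu_{j,t+1}=W_t(x_{j,t+1})+\frac{1-\beta_t}{\beta_t}\left(W_t(x_{j,t+1})-W_t(x_{j,t})\right).
\end{equation*}
The first term is your $\psi_{j,t}$; the second is the SCSC correction noise, a martingale difference whose conditional variance is of order $\beta_t^{-2}\|x_{j,t+1}-x_{j,t}\|^2$ --- and this bound is all condition (e) can deliver. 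Since $\beta_t=\beta\alpha_t$ while $\|x_{j,t+1}-x_{j,t}\|=\Theta_p(\alpha_t)$ (the increment contains $\alpha_t$ times the tracker $\mathbf{y}_t=\mathbf{y}_t^{'}+\xi_t$, and $\mathbb{E}[\|\xi_t\|^2]$ does not decay, being driven by fresh sampling noise; see Lemma \ref{lem:nonconv}(i)), each such difference carries conditional variance $\Theta(1/\beta^2)$, so $\frac{1}{\sqrt{k}}\sum_t$ of them is $O_p(1)$, not $o_p(1)$. It vanishes only in special cases (additive inner noise, $G_j(x;\phi)=g_j(x)+\phi$, makes $W_t(x_{j,t+1})-W_t(x_{j,t})\equiv 0$), but for, say, multiplicative noise it contributes variance at CLT order, so the claim ``$\sum_t e_{j,t}=\sum_t\psi_{j,t}+o_p(\sqrt{k})$'' is unjustified as stated. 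You should know that the paper's own proof stumbles at this same point: the term $\frac{1}{\sqrt{k}}\sum_t\mathbf{M}(k,t)\eta_t^{(3)}$ is bounded there by sums involving $\left(\frac{L_g^{'}}{\tilde{\alpha}_t}\right)^2\mathbb{E}\left[\|x_{j,t+1}-x_{j,t}\|^2\right]$ and then asserted to equal $\frac{1}{k}\sum_t\mathcal{O}(\alpha_t)$, which would require $\mathbb{E}\left[\|x_{j,t+1}-x_{j,t}\|^2\right]=\mathcal{O}(\alpha_t^3)$, whereas the paper itself establishes only $\mathcal{O}(\alpha_t^2)$ (and no better is available, for the reason above). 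So your proposal reproduces, rather than repairs, the one step of this theorem that needs either a genuinely new argument or an additional hypothesis controlling the conditional variance of $G_j(x;\phi)-G_j(x^{'};\phi)$ beyond the Lipschitz bound of condition (e).
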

\begin{proof}
By (\ref{x-y-consensus}),
\begin{equation*}
\begin{aligned}
&\mathbb{E}\left[\left\|\frac{1}{\sqrt{k}}\sum_{t=0}^{k-1}\left(\bar{x}_{t}-x^*\right)-\frac{1}{\sqrt{k}}\sum_{t=0}^{k-1}\left(x_{i,t}-x^*\right)\right\|\right]\\
&\le \frac{1}{\sqrt{k}}\sum_{t=0}^{k-1}\sqrt{\mathbb{E}\left[\|x_{t}-\mathbf{1}\otimes\bar{x}_{t}\|^2\right]}\le\frac{\sqrt{U_1}}{\sqrt{k}}\sum_{t=0}^{k-1}\alpha_t\rightarrow 0.
\end{aligned}
\end{equation*}
Then by Slutsky's theorem, it  is sufficient  to show
{\small\begin{equation*}
\frac{1}{\sqrt{k}}\sum_{t=1}^{k}
\left(
\begin{array}{c}
\bar{x}_t-x^*\\
\frac{\sum_{j=1}^{n}\nabla g_j(x^*)\mathbf{T}_j\left(z_{j,t}-g_j\left(x_{j,t}\right)\right)}{n}
\end{array}
\right)\stackrel{d}{\longrightarrow} N\left(\mathbf{0},\left(
\begin{array}{cc}
\mathbf{H}^{-1}\left(\mathbf{S}_1+\mathbf{S}_2\right)(\mathbf{H}^{-1})^\intercal& -\frac{1}{n}\mathbf{H}^{-1}\mathbf{S}_2\\
-\frac{1}{n}\mathbf{S}_2(\mathbf{H}^{-1})^\intercal& \frac{1}{n^2}\mathbf{S}_2
\end{array}
\right)\right).
\end{equation*}}

Subtract $x^*$ from both sides of (\ref{consensus-new-form}),
\begin{align}
\bar{x}_{k+1}-x^*
&=\bar{x}_{k}-x^*-\frac{\mathbf{u}^\intercal \mathbf{v}\alpha_{k}}{n}\nabla h(\bar{x}_k)+\left(\frac{\mathbf{u}^\intercal \mathbf{v}\alpha_{k}}{n}\right)\left(P^{(1)}_k+P^{(2)}_k+P^{(3)}_k+P^{(4)}_k\right)\notag\\
&=\left(\mathbf{I}_d-\tilde{\alpha}_k\frac{1}{n}\mathbf{H}\right)(\bar{x}_{k}-x^*)-\tilde{\alpha}_k\frac{1}{n}\sum_{j=1}^{n}\nabla g_j(x^*)\mathbf{T}_j\left(z_{j,k}-g_j\left(x_{j,k}\right)\right)\notag\\
\label{consensus-new-form-1}&\quad+\tilde{\alpha}_k\left(P^{(0)}_k+P^{(1)}_k+P^{(3)}_k+P^{(4)}_k\right),
\end{align}
where $\tilde{\alpha}_k=\frac{\mathbf{u}^\intercal \mathbf{v}\alpha_{k}}{n}$, 
\begin{equation}\label{P-0}
P^{(0)}_k=-\left(\nabla h(\bar{x}_k)-\frac{1}{n}\mathbf{H}(\bar{x}_{k}-x^*)\right)+\frac{1}{n}\sum_{j=1}^{n}\nabla g_j(x^*)\mathbf{T}_j\left(z_{j,k}-g_j\left(x_{j,k}\right)\right)+P^{(2)}_k.
\end{equation}
According to recursion (\ref{z-recur}) and the definition of $\beta_k$,
{\small\begin{align*}\label{z-recur-1}
z_{i,k+1}-g_i\left(x_{i,k+1}\right)
=\left(1-\frac{n\beta}{\mathbf{u}^\intercal \mathbf{v}}\tilde{\alpha}_k\right)\left(z_{i,k}-g_i\left(x_{i,k}\right)\right)+G_{i,k+1}^{(1)}-g_i(x_{i,k+1})+\left(1-\beta_k\right)\left(g_i(x_{i,k})-G_{i,k+1}^{(2)}\right),
\end{align*}}
where $G_{i,k+1}^{(1)}=G_i(x_{i,k+1};\phi_{i,k+1}^{'})$, $G_{i,k+1}^{(2)}=G_i(x_{i,k};\phi_{i,k+1}^{'})$.
Combining above equation with (\ref{consensus-new-form-1}),
\begin{equation}\label{recur-2}
\Delta_{k+1}=\left(\mathbf{I}_{2d}-\tilde{\alpha}_k\mathbf{H}_{\theta}\right)\Delta_k+\tilde{\alpha}_k\eta_k^{(1)}+\tilde{\alpha}_k\left(\eta_k^{(2)}+\eta_k^{(3)}\right),
\end{equation}
where
\begin{equation*}
\Delta_k=\left(
\begin{array}{c}
\bar{x}_{k}-x^*\\
\frac{\sum_{j=1}^{n}\nabla g_j(x^*)\mathbf{T}_j\left(z_{j,k}-g_j\left(x_{j,k}\right)\right)}{n}
\end{array}
\right),\quad\mathbf{H}_{\theta}=\left(
\begin{array}{cc}
\frac{1}{n}\mathbf{H}& \mathbf{I}_d\\
\mathbf{0}& \frac{n\beta}{\mathbf{u}^\intercal \mathbf{v}}\mathbf{I}_d
\end{array}
\right),
\end{equation*}
{\small\begin{equation}\label{eta-2}
	\eta_k^{(1)}=\left(
	\begin{aligned}
	&\quad\quad\quad~~\quad\quad\quad~~\quad\quad P^{(4)}_k\\
	&\frac{\beta}{\mathbf{u}^\intercal \mathbf{v}}\sum_{j=1}^{n}\nabla g_j(x^*)\mathbf{T}_j\left(G_j(x^*;\phi_{j,k+1}^{'})-g_j(x^*)\right)
	\end{aligned}
	\right),\quad\eta_k^{(2)}=	\left(
	\begin{array}{c}
	P^{(0)}_k+P^{(1)}_k+P^{(3)}_k\\
	\mathbf{0}
	\end{array}
	\right),
	\end{equation}}
and
{\small\begin{equation*}
	\eta_k^{(3)}=\left(
	\begin{array}{c}
	\mathbf{0}\\
	\sum\limits_{j=1}^{n}\nabla g_j(x^*)\mathbf{T}_j\left(\frac{G_{j,k+1}^{(1)}-g_j(x_{j,k+1})+\left(1-\beta_k\right)\left(g_j(x_{j,k})-G_{j,k+1}^{(2)}\right)}{n\tilde{\alpha}_k}-\frac{\beta}{\mathbf{u}^\intercal \mathbf{v}}\left(G_j(x^*;\phi_{j,k+1}^{'})-g_j(x^*)\right)\right)
	\end{array}
	\right).
	\end{equation*}}

Denote $\mathbf{M}(k,t)=\tilde{\alpha}_t\sum_{l_1=t}^k\Pi_{l_2=t+1}^{l_1}\left(\mathbf{I}_{2d}-\tilde{\alpha}_k\mathbf{H}_{\theta}\right),\quad \mathbf{N}(k,t)=\mathbf{M}(k,t)-\mathbf{H}_{\theta}^{-1}$.
Then by the recursion (\ref{recur-2}),
{\small\begin{align}
	\frac{1}{\sqrt{k}}\sum_{t=1}^{k}\Delta_t&=\frac{1}{\sqrt{k}}\sum_{t=1}^{k}\mathbf{H}_{\theta}^{-1}\eta_t^{(1)}+\frac{1}{\sqrt{k}}\sum_{t=1}^{k}\mathbf{N}(k,t)\eta_t^{(1)}+\frac{1}{\sqrt{k}}\sum_{t=1}^{k}\mathbf{M}(k,t)\eta_t^{(2)}\notag\\
	\label{recur-1}&\quad+\frac{1}{\sqrt{k}}\sum_{t=1}^{k}\mathbf{M}(k,t)\eta_t^{(3)}+\mathcal{O}\left(\frac{1}{\sqrt{k}}\right).
	\end{align}}

It is easy to show that the second term on the right hand side of  (\ref{recur-1}) converge to 0 in probability, see Lemma \ref{lem:con-pro} in Appendix for details. For the third term on the right hand side of  (\ref{recur-1}),
{\small\begin{equation*}
\begin{aligned}
&\mathbb{E}\left[\left\|\frac{1}{\sqrt{k}}\sum_{t=1}^{k}\mathbf{M}(k,t)\eta_t^{(2)}\right\|\right]\\
&\le \frac{1}{\sqrt{k}}\sum_{t=1}^{k}\normm{\mathbf{M}(k,t)}\left(\mathbb{E}\left[\left\|\frac{1}{n}\sum_{j=1}^{n}\nabla g_j(x^*)\left(\nabla f_j(g_j(x_{j,t}))-\nabla f_j(z_{j,t})-\mathbf{T}_j\left(z_{j,t}-g_j\left(x_{j,t}\right)\right)\right)\right\|\right]\right.\\
&\quad\left.+\mathbb{E}\left[\left\|\nabla h(\bar{x}_t)-\frac{1}{n}\mathbf{H}(\bar{x}_{t}-x^*)\right\|\right]+\mathbb{E}\left[\left\|\frac{1}{n}\sum_{j=1}^{n}\left(\nabla g_j(x_{j,t})-\nabla g_j(x^*)\right)\left(\nabla f_j(g_j(x_{j,t}))-\nabla f_j(z_{j,t})\right)\right\|\right]\right)\\
&\quad+\frac{1}{\sqrt{k}}\sum_{t=1}^{k}\normm{\mathbf{M}(k,t)}\mathbb{E}\left[\left\|P^{(1)}_t+P^{(3)}_t\right\|\right]\\
&\le \frac{1}{\sqrt{k}}\sum_{t=1}^{k}\normm{\mathbf{M}(k,t)}\left(\frac{1}{n}\sum_{j=1}^{n}\|\nabla g_j(x^*)\|\mathbb{E}\left[\left\|z_{j,t}-g_j\left(x_{j,t}\right)\right\|^{1+\gamma}\right]+\mathbb{E}\left[\left\|\bar{x}_{t}-x^*\right\|^{1+\gamma}\right]\right.\\
&\quad\left.+\frac{1}{n}\sum_{j=1}^{n}L_gL_f\sqrt{\mathbb{E}\left[\left\|x_{j,t}-x^*\right\|^2\right]\mathbb{E}\left[\left\|g_j(x_{j,t})-z_{j,t}\right\|^2\right]}\right)+\frac{1}{\sqrt{k}}\sum_{t=1}^{k}\normm{\mathbf{M}(k,t)}\mathbb{E}\left[\left\|P^{(1)}_t+P^{(3)}_t\right\|\right]\\
&= \frac{1}{\sqrt{k}}\sum_{t=1}^{k}\normm{\mathbf{M}(k,t)}\mathcal{O}\left(\alpha_t^{(1+\gamma)/2}+\alpha_t\right),
\end{aligned}
\end{equation*}}
where the first inequality follows from the definitions of $\eta_t^{(2)}$, $P_t^{(0)}$ and $P_t^{(2)}$ in (\ref{eta-2}), (\ref{P-0}) and (\ref{consensus-new-form}), the second inequality follows from condition (d), Assumption \ref{ass-objective} (a) and the H\"{o}lder inequality, the equality follows from (\ref{x-y-consensus}), Lemma \ref{lem:g-tra}, Theorem \ref{thm:rate} and
(\ref{error-1}). Then by  the boundedness of $\mathbf{M}(k,t)$ \cite[Lemma 1 (ii)]{Polyak1992}, the fact $\sum_{k=1}^\infty\frac{\alpha_k^{(1+\gamma)/2}}{\sqrt{k}}<\infty$ and Kronecker Lemma, we have
\begin{equation*}
\mathbb{E}\left[\left\|\frac{1}{\sqrt{k}}\sum_{t=1}^{k}\mathbf{M}(k,t)\eta_t^{(2)}\right\|\right]\le  \frac{1}{\sqrt{k}}\sum_{t=1}^{k}\normm{\mathbf{M}(k,t)}\mathcal{O}\left(\alpha_t^{(1+\gamma)/2}+\alpha_t\right)\longrightarrow 0.
\end{equation*}
Noting that ${\eta_k^{(3)}}$ is a martingale difference sequence adapted to the filtration $\mathcal{F}_k$ (\ref{s-algebra}), the fourth term on the right hand side of  (\ref{recur-1})
{\small\begin{equation*}
\begin{aligned}
&\mathbb{E}\left[\left\|\frac{1}{\sqrt{k}}\sum_{t=1}^{k}\mathbf{M}(k,t)\eta_t^{(3)}\right\|^2\right]\\
&=\frac{1}{k}\sum_{t=1}^{k}\mathbb{E}\left[\left\|\mathbf{M}(k,t)\sum_{j=1}^{n}\nabla g_j(x^*)\mathbf{T}_j\left(\frac{G_{j,t+1}^{(1)}-g_j(x_{j,t+1})-\left(G_{j,t+1}^{(2)}-g_j(x_{j,t})\right)}{n\tilde{\alpha}_t}\right.\right.\right.\\
&\quad\left.+\frac{\beta}{\mathbf{u}^\intercal \mathbf{v}}\left(G_{j,t+1}^{(2)}-g_j(x_{j,t})-\left(G_j(x^*;\phi_{j,t+1}^{'})-g_j(x^*)\right)\right)\bigg)\bigg\|^2\right]\\
&\le \frac{1}{k}\sum_{t=1}^{k}\frac{1}{n}\sum_{j=1}^{n}\normm{\mathbf{M}(k,t)}^2\|\nabla g_j(x^*)\|^2\|\mathbf{T}_j\|^24\left(\left(\frac{L_g^{'}}{\tilde{\alpha}_t}\right)^2\mathbb{E}\left[\left\|x_{j,t+1}-x_{j,t}\right\|^2\right]+\left(\frac{n\beta L_g^{'}}{\mathbf{u}^\intercal \mathbf{v}}\right)^2\mathbb{E}\left[\left\|x_{j,t}-x^*\right\|^2\right]\right)\\
&=\frac{1}{k}\sum_{t=1}^{k}\mathcal{O}\left(\alpha_t\right),
\end{aligned}
\end{equation*}}
where the inequality follows from the Lipschitz continuity of $G_j(\cdot;\phi)$, the second equality follows from (\ref{x-y-consensus}), Theorem \ref{thm:rate} and the fact
\begin{equation*}
\mathbb{E}\left[\left\|x_{j,t+1}-x_{j,t}\right\|^2\right]\le 3\left(\mathbb{E}\left[\left\|x_{j,t+1}-\bar{x}_{t+1}\right\|^2\right]+\mathbb{E}\left[\left\|x_{j,t}-\bar{x}_{t}\right\|^2\right]+\mathbb{E}\left[\left\|\bar{x}_{t+1}-\bar{x}_{t}\right\|^2\right]\right)=\mathcal{O}\left(\alpha_t^2\right).
\end{equation*}
Then by Kronecker Lemma, $\mathbb{E}\left[\left\|\frac{1}{\sqrt{k}}\sum_{t=1}^{k}\mathbf{M}(k,t)\eta_t^{(3)}\right\|^2\right]=\frac{1}{k}\sum_{t=1}^{k}\mathcal{O}\left(\alpha_t\right)\longrightarrow 0.$

It is left to show the asymptotic normality of the first term on the right hand side of  (\ref{recur-1}). Indeed, by the similar way to \cite[Lemma 6 in Appendix B]{zhao2021asymptotic}, we may obtain that
\begin{equation*}
\mathbb{E}\left[\left\|\frac{1}{\sqrt{k}}\sum_{t=0}^{k-1} P^{(4)}_k-\frac{1}{\sqrt{k}}\sum_{t=0}^{k-1}\left(\frac{\mathbf{1}^\intercal}{n}\otimes \mathbf{I}_d\right)\epsilon_t^*\right\|^2\right]\longrightarrow 0,\quad \frac{1}{\sqrt{k}}\sum_{t=1}^{k}\left(\frac{\mathbf{1}^\intercal}{n}\otimes \mathbf{I}_d\right)\epsilon_t^*\stackrel{d}{\rightarrow} N\left(\mathbf{0},\frac{1}{n^2}\mathbf{S}_1\right)
\end{equation*}
and 
\begin{equation*}
\frac{1}{\sqrt{k}}\sum_{t=1}^{k}\frac{\beta}{\mathbf{u}^\intercal \mathbf{v}}\sum_{j=1}^{n}\nabla g_j(x^*)\mathbf{T}_j\left(G_j(x^*;\phi_{j,k+1}^{'})-g_j(x^*)\right)\stackrel{d}{\rightarrow} N\left(\mathbf{0},\left(\frac{\beta}{\mathbf{u}^\intercal \mathbf{v}}\right)^2\mathbf{S}_2\right),
\end{equation*}
where
\begin{align*}
&\epsilon_t^*=\left[\left(\nabla G_1(x^*;\phi_{1,t})\nabla F_1(g(x^*);\zeta_{1,t})-\nabla g_1(x^*;\phi_{1,t})\nabla f_1(g(x^*))\right)^\intercal,\cdots,\right.\\
&\quad\quad\left.\left(\nabla G_n(x^*;\phi_{n,t})\nabla F_n(g(x^*);\zeta_{n,t})-\nabla g_n(x^*;\phi_{n,t})\nabla f_n(g(x^*))\right)^\intercal\right]^\intercal.
\end{align*}
Note that {\small$\mathbf{H}_{\theta}^{-1}=\left(
\begin{array}{cc}
n\mathbf{H}^{-1}& -\frac{\mathbf{u}^\intercal \mathbf{v}}{\beta}\mathbf{H}^{-1}\\
\mathbf{0}& \frac{\mathbf{u}^\intercal \mathbf{v}}{n\beta}\mathbf{I}_d
\end{array}
\right)$}
%
and $\phi_{i,k}$ is independent of $\phi_{i,k}^{'}$.
Then
\begin{equation*}
\frac{1}{\sqrt{k}}\sum_{t=1}^{k}\mathbf{H}_{\theta}^{-1}\eta_t^{(1)}\stackrel{d}{\longrightarrow}
N\left(\mathbf{0},\left(
\begin{array}{cc}
\mathbf{H}^{-1}\left(\mathbf{S}_1+\mathbf{S}_2\right)(\mathbf{H}^{-1})^\intercal& -\frac{1}{n}\mathbf{H}^{-1}\mathbf{S}_2\\
-\frac{1}{n}\mathbf{S}_2(\mathbf{H}^{-1})^\intercal& \frac{1}{n^2}\mathbf{S}_2
\end{array}
\right)\right).
\end{equation*}
The proof is complete.
\end{proof}

Theorem \ref{thm:asym-norm} shows that Polyak-Ruppert averaged iterates of the proposed method converge
in distribution to a normal random vector for any agent. Different from the traditional  asymptotic normality results on SA based methods \cite{chung1954stochastic,fabian1968asymptotic}, the  asymptotic covariance matrix in (\ref{limit distribution}) has two parts,  $\mathbf{H}^{-1}\mathbf{S}_1(\mathbf{H}^{-1})^\intercal$ and $\mathbf{H}^{-1}\mathbf{S}_2(\mathbf{H}^{-1})^\intercal$, where the first one is induced by the randomness of gradient and the second one is induced by the randomness of the inner function. Indeed, the    asymptotic  normality   on the SAA scheme for stochastic compositional optimization  has been studied by  Dentcheva et al. \cite{Dentcheva2017}.
To the best of our knowledge, Theorem \ref{thm:asym-norm} is the first asymptotic normality result for the SA based method on distributed  stochastic compositional optimization problem.

\section{Experimental Results} 
We test the proposed method for two applications, i.e.,   model-agnostic meta learning  problem and logistic regression problem.
\subsection{Model-agnostic meta learning}\label{maml}
Model-agnostic meta learning (MAML)  is a powerful tool for learning a new task by using the prior
experience from related tasks \cite{maml-finn-17}. It is to
find a good initialization parameter from similar learning tasks such that taking several gradient steps would produce good results on  new tasks,  and the optimizations model is
\begin{equation}\label{MAML}
\min_{x\in\mathbb{R}^d}\frac{1}{M}\sum_{m=1}^M f_m\left(x-\alpha\nabla f_m(x)\right),	
\end{equation}
where $m=1,2,\cdots,M$ is the index of training tasks, $\alpha$ is the adaptation stepsize, $f_m(x)=\mathbb{E}\left[F_m(x;\zeta_m)\right]$ is the loss function of task $m$.
We illustrate the empirical performance of AB-DSCSC to solve MAML problem (\ref{MAML}) and compare it with GP-DSCGD and GT-DSCGD \cite{gao2021fast}.
\begin{figure}[t]
	\centering
	\begin{minipage}[t]{0.41\textwidth}
		\includegraphics[height=2.4in,width=2.9in]{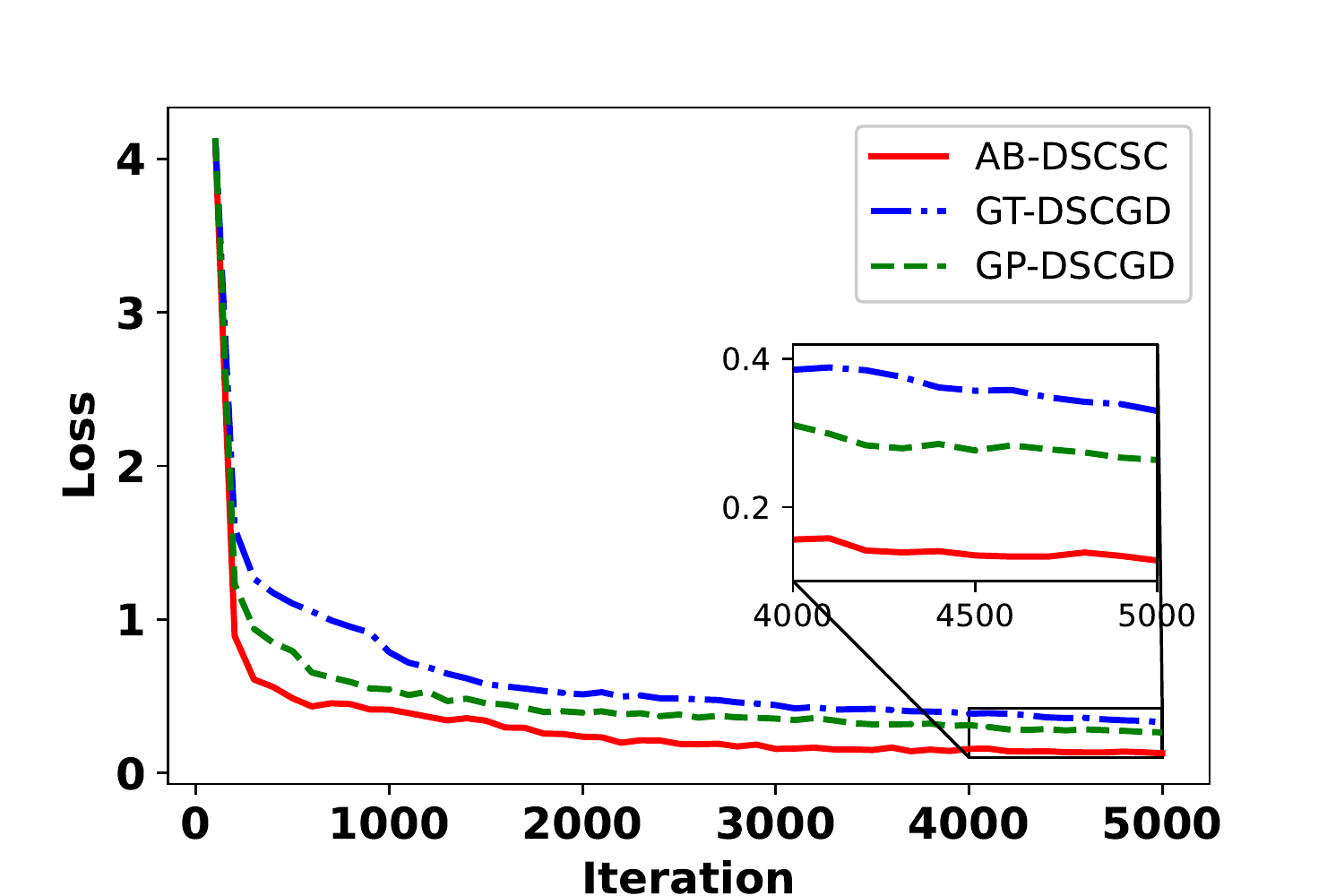}
	\end{minipage}
	\hspace{1.5cm}
	\begin{minipage}[t]{0.43\textwidth}
		\includegraphics[height=2.4in,width=2.9in]{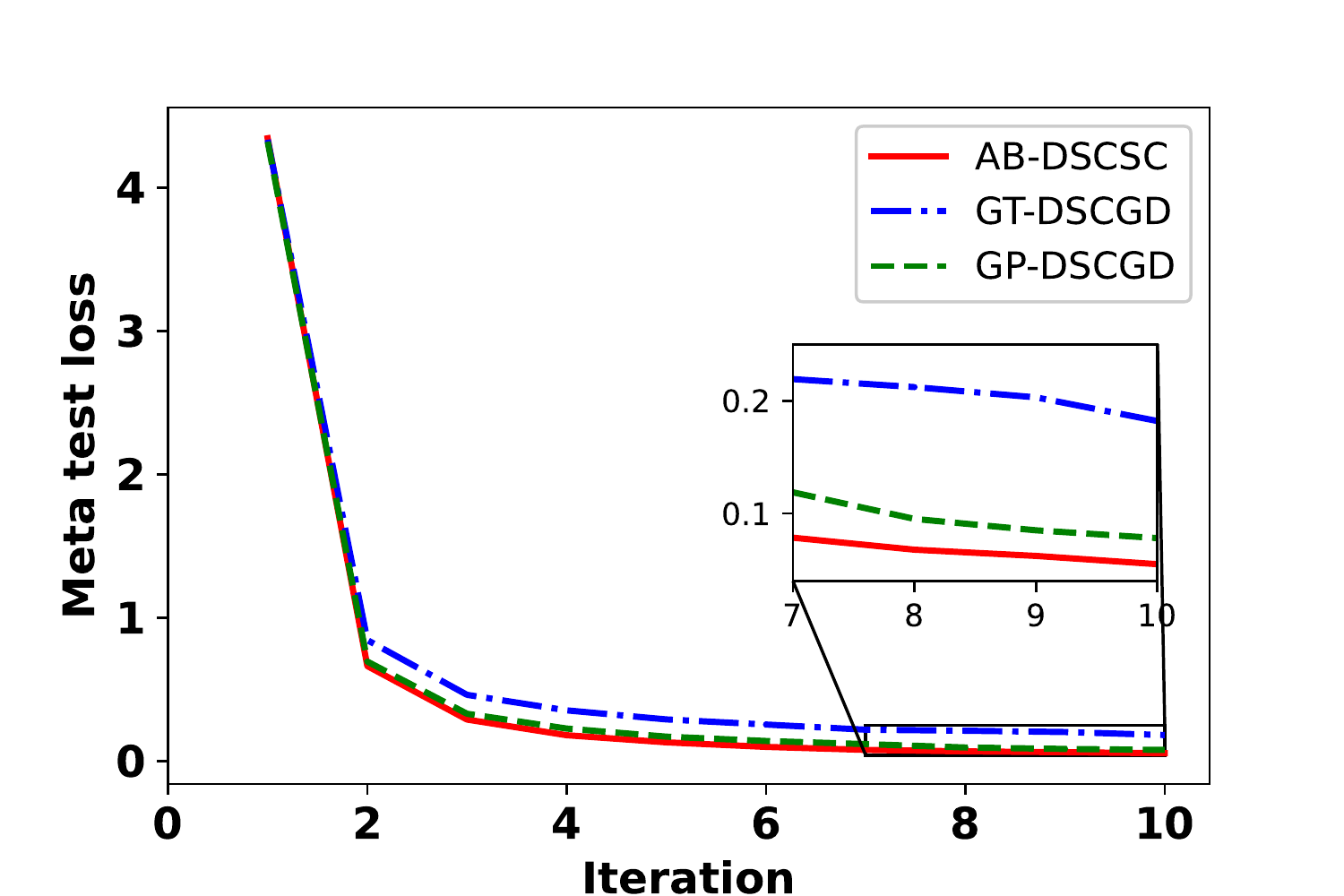}
	\end{minipage}
	\caption{Meta-training and meta-test.}\label{fig:maml}
\end{figure}

The setting of MAML is as follows \cite{gao2021fast,tianyi2021sol}.  Each task $m \in \mathcal{M}=\{1,2,\cdots,M\}$ maps the input $b$ to a sine wave $s(b;a_m,\phi_m)\define a_m\sin(b+\phi_m)$ where the amplitude $a_m$ and phase $\phi_m$ of the sinusoid vary across tasks. The tasks' parameters $a_m$ and $\phi_m$ are sampled uniformly from $[0.1,5]$ and $[0,2\pi]$ respectively, input domain of $b$ is uniform on $[5,-5]$. The regressor of $s(b;a_m,\phi_m)$ is a fully-connected neural network $\hat{s}(b;x)$, which consists of two hidden layers with 40 ReLU nodes. The loss function $f_m(z)=\mathbb{E}_{b}\left[\left\|\hat{s}(b;z)-a_m\sin(b+\phi_m)\right\|^2\right]$   and  the one-step adaptation stepsize  $\alpha=0.01$.

In this experiment, we  generate a directed graph $\mathcal{G}$ of 5 agents by adding random links to a ring network. Each agent is assigned with 200 training tasks, i.e. $M=1000$ in problem (\ref{MAML}). We utilize 2500 new tasks of sinusoidal regression to test the obtained parameters.  For AB-DSCSC, stepsize $\alpha_k=0.01,\beta_k=0.8$ and communication graphs $\mathcal{G}_\A=\mathcal{G}_{\Z^\intercal}=\mathcal{G}$. For GP-DSCGD and GT-DSCGD, stepsize $\eta=0.03,\gamma=3$, $\beta_k=0.33$,  and set the underlying graph\footnote{The underlying graph of a directed graph $\mathcal{G}^{'}$ is an undirected graph obtained by replacing all directed edges of $\mathcal{G}^{'}$ with undirected edges.} of $\mathcal{G}$ as the communication graph. In each task, we use 10 samples for training and testing.

We run AB-DSCSC, GP-DCSGD and GT-DCSGD for 5000 iterations and record their performance on the
training loss and test loss in Figure \ref{fig:maml}, where the solid curve, dash-dot curve and dashed curve display the
averaged training loss of AB-DSCSC, GT-DCSGD and GP-DCSGD over different agents respectively. We can observe from Figure \ref{fig:maml} (left) that the three methods achieve similar performance. Figure \ref{fig:maml} (right) depicts the test loss on new tasks after 10 gradient descent steps with the learned model parameters as the initialization.  Again, the three methods achieve similar performance on the new tasks and they are well adaptable to new tasks as
test loss decreasing quickly.

\subsection{Conditional stochastic optimization}

We consider a modified logistic regression problem, in which the inner and outer randomness are independent of each other\cite{Hu2020sample},
\begin{equation}\label{logre}
\min_{x\in\mathbb{R}^d} h(x)=\frac{1}{n}\sum_{i=1}^n \frac{1}{m_i-m_{i-1}}\sum_{j=m_{i-1}+1}^{m_i}\log \left(1+\exp\left(-b_j\left(\frac{1}{l}\sum_{s=1}^l\phi_{s}+a_j\right)^\intercal x\right)\right),
\end{equation}
where $n=50$, $m_i=20i$, $l=10000$, $a_j\sim N(\mathbf{0},\mathbf{I}_d)$, $b_j\in\{1,-1\}$, $\phi_{s}\sim N(\mathbf{0},\mathbf{I}_d)$.
Obviously, problem (\ref{logre}) falls in the form of DSCO with
inner function
$$g_i(x)=\left[-b_{m_{i-1}+1}\left(\frac{1}{l}\sum_{s=1}^l\phi_{s}+a_{m_{i-1}+1}\right)^\intercal x,\cdots,~-b_{m_i}\left(\frac{1}{l}\sum_{s=1}^l\phi_{s}+a_{m_i}\right)^\intercal x\right]^\intercal,$$
outer function $f_i(z)=\frac{1}{m_i-m_{i-1}}\sum_{j=m_{i-1}+1}^{m_i}\log \left(1+\exp\left(z_j\right)\right)$, $z_j$ is the $j$-th component of vector $z\in \mathbb{R}^{m_i}$.
\begin{figure}[H]
	\centering
	\begin{minipage}[t]{0.41\textwidth}
		\centering
		\includegraphics[height=2.4in,width=2.9in]{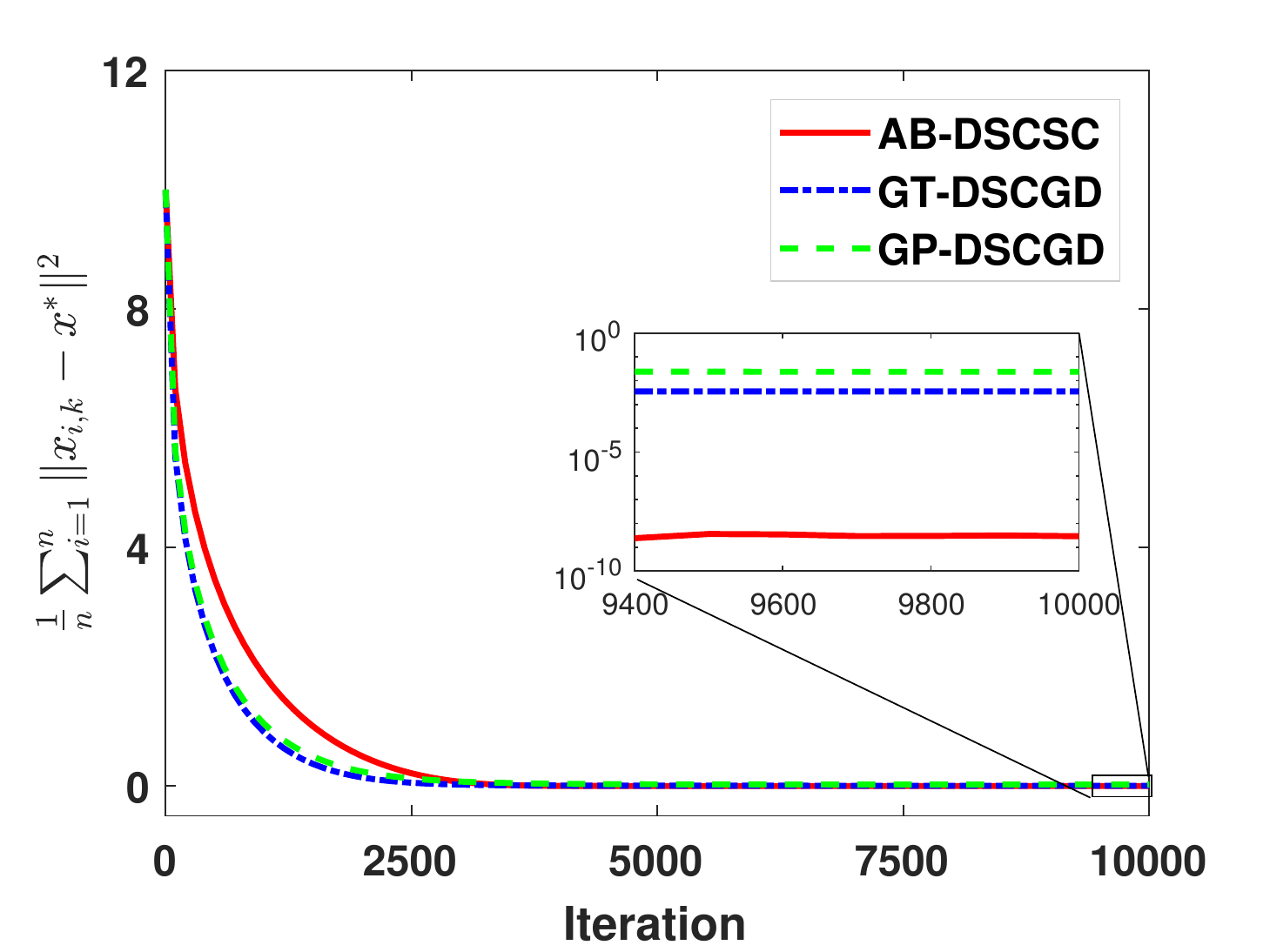}
	\end{minipage}
	\hspace{1.5cm}
	\begin{minipage}[t]{0.43\textwidth}
		\centering
		\includegraphics[height=2.4in,width=2.9in]{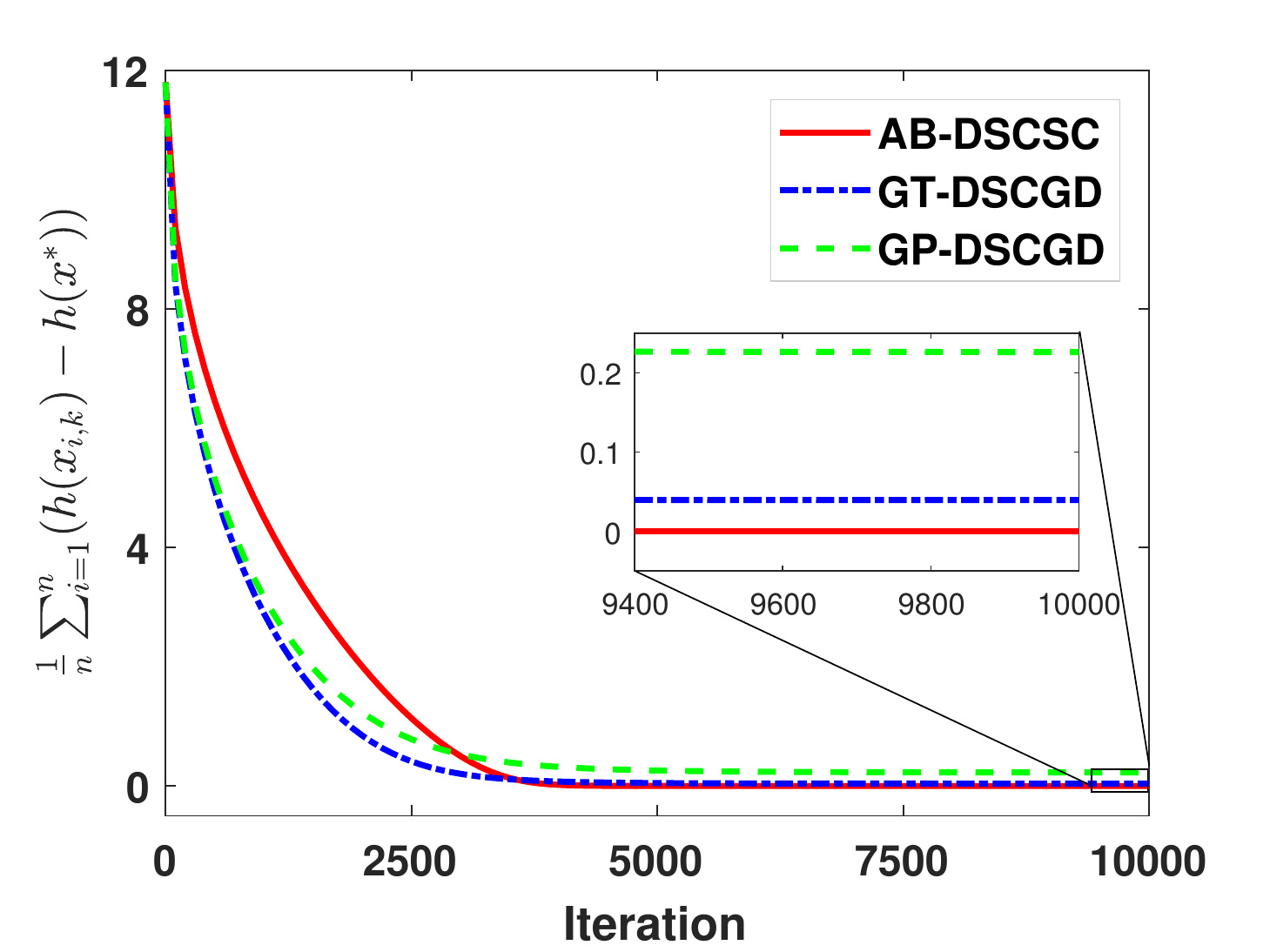}
	\end{minipage}
	\caption{Optimality gap and residual.}\label{fig:loge-re}
\end{figure}
Similarly, we  generate a directed graph $\mathcal{G}$ of 50 agents by adding random links to a ring network, and set communication graphs $\mathcal{G}_\A=\mathcal{G}_{\Z^\intercal}=\mathcal{G}$ for AB-DSCSC. The communication graph of GP-DCSGD and GT-DSCGD is also set as the underlying graph of $\mathcal{G}$.
The stepsize $\alpha_{k}=0.01/k^{0.55},~\beta_k=0.8/k^{0.6}$ for AB-DSCSC and  $\eta=0.03,\gamma=3,\beta_k=0.33/k^{0.6}$ for GP-DCSGD and GT-DCSGD.

Note that problem (\ref{logre}) is  a convex optimization problem, we solve it by centralized gradient descent and denote the optimal solution as $x^*$. Then, we run AB-DSCSC, GP-DCSGD and GT-DCSGD for $10000$ iterations and record their performance on the averaged optimality gap $\frac{1}{n}\sum_{i=1}^n\|x_{i,k}-x^*\|^2$ and average residual $\frac{1}{n}\sum_{i=1}^n (h(x_{i,k})-h(x^*))$ in Figure \ref{fig:loge-re}. Obviously, the three methods can solve the problem efficiently and achieve similar performance.

\textbf{Acknowledgment.} The authors thank  
Dr. Yuejiao Sun for sharing the code of SCSC  \cite{tianyi2021sol}. The research is supported by the NSFC \#11971090.

\bibliographystyle{siam}
\bibliography{dsi_mybib}

\section*{Appendix}
\begin{appendices}

\begin{lem}\label{lem:noi-bound}
	Let $\alpha_k=a/(k+b)^\alpha$, $a>0,b\ge0$, $\alpha\in (1/2, 1]$. Under Assumptions \ref{ass-objective}-\ref{ass:matrix} and the condition that objective function $h(x)$ is $\mu$-strongly convex,
	$$\mathbb{E}\left[\left\langle \bar{x}_{k}-x^*,-\alpha_{k}\left(\frac{\mathbf{u}^\intercal}{n}\otimes\mathbf{I}_{d}\right)\xi_k\right\rangle\right]\le \frac{\|\mathbf{u}\|c_bc_0}{2n(1-\tau_\Z)}\left(\frac{c_b^2nC_gC_f}{(1-\tau_\Z)^2}+4nC_g C_f\right)\alpha_k^2,$$
	where $c_b=\max\left\{\X,\frac{\normm{\mathbf{B}-\mathbf{I}_{n}}_\Z}{\tau_\Z}\X\right\}$, $c_0$ is some constant scalar.
	
\end{lem}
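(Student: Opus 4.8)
The plan is to mirror the argument of Lemma~\ref{lem:nonconv}(ii), with $\bar{x}_k-x^*$ in the role of $\nabla h(\bar{x}_k)$ and with the extra factor $\alpha_k$ carried through to the end. First I would invoke the representation $\xi_k=\sum_{t=1}^k\tilde{\mathbf{B}}(k,t)\epsilon_t$ from (\ref{e-repre}), in which $\epsilon_t=\mathbf{H}_t-\mathbf{J}_t$ is a martingale difference sequence for the filtration $\{\mathcal{F}_t\}$ of (\ref{s-algebra}): indeed $x_{i,t},z_{i,t}$ are $\mathcal{F}_t$-measurable while $\phi_{i,t},\zeta_{i,t}$ are freshly drawn, so Assumption~\ref{ass-objective}(b) gives $\mathbb{E}[\epsilon_t\mid\mathcal{F}_t]=\mathbf{0}$. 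Distributing the inner product over $t$ and telescoping
\[
\bar{x}_k-x^*=(\bar{x}_t-x^*)+\sum_{l=t+1}^{k}(\bar{x}_l-\bar{x}_{l-1}),
\]
I would note that $\bar{x}_t-x^*$ is $\mathcal{F}_t$-measurable, so conditioning on $\mathcal{F}_t$ annihilates its pairing with $\epsilon_t$, leaving only the increments indexed by $l>t$. (Here $\mu$-strong convexity is used merely to guarantee the minimizer $x^*$; the estimate itself does not rely on it, nor on the precise exponent $\alpha$, only on $\alpha_k$ being nonincreasing with $\lim_k\alpha_k/\alpha_{k+1}=1$.)

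Next I would bound each surviving increment through the averaging recursion $\bar{x}_l-\bar{x}_{l-1}=-\alpha_{l-1}(\frac{\mathbf{u}^\intercal}{n}\otimes\mathbf{I}_d)\mathbf{y}_{l-1}$ and the elementary inequality $\|(\frac{\mathbf{u}^\intercal}{n}\otimes\mathbf{I}_d)v\|\le\frac{\|\mathbf{u}\|}{n}\|v\|$, combined with $\normm{\tilde{\mathbf{B}}(k,t)}\le c_b\tau_\Z^{k-t}$ from (\ref{B-bound}). A Cauchy--Schwarz splitting then gives
\[
\mathbb{E}\big[\|\mathbf{y}_{l-1}\|\,\|\epsilon_t\|\big]\le\tfrac12\big(\mathbb{E}[\|\mathbf{y}_{l-1}\|^2]+\mathbb{E}[\|\epsilon_t\|^2]\big)\le\tfrac12\Big(\tfrac{c_b^2nC_gC_f}{(1-\tau_\Z)^2}+4nC_fC_g\Big),
\]
using the uniform bounds (\ref{y-bound}) and (\ref{e-bound}). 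These steps collapse the whole quantity to a fixed constant times the weighted double sum $\sum_{t=1}^{k-1}\tau_\Z^{k-t}\sum_{l=t+1}^{k}\alpha_{l-1}$.

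It then remains to show this double sum is $\mathcal{O}(\alpha_k)$. I would swap the order of summation to rewrite it as $\sum_{l=2}^{k}\alpha_{l-1}\sum_{t=1}^{l-1}\tau_\Z^{k-t}$, bound the inner geometric sum by $\tau_\Z^{k-l+1}/(1-\tau_\Z)$, reindex $m=l-1$, and apply the estimate of Lemma~\ref{lem:weighted-seq}---whose proof needs only $\tau_\Z<1$ (guaranteed by Lemma~\ref{lem:norm}) and $\lim_k\alpha_{k-1}/\alpha_k=1$---with $\tau_\Z$ in place of $\rho$, yielding $\sum_{m=1}^{k-1}\tau_\Z^{k-m}\alpha_m\le c_0\alpha_k$. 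Collecting constants produces a bound of the form $|\mathbb{E}[\langle\bar{x}_k-x^*,(\frac{\mathbf{u}^\intercal}{n}\otimes\mathbf{I}_d)\xi_k\rangle]|\le (\text{const})\,\alpha_k$, and multiplying by the prefactor $\alpha_k$ of the statement delivers the claimed $\alpha_k^2$ bound, the explicit constant absorbing the benign dimension-free factors into $c_0$.

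I expect the main obstacle to be the martingale bookkeeping in the first step: one must check that telescoping leaves exactly the increments with $l>t$, that these are genuinely correlated with $\epsilon_t$ so that only a Cauchy--Schwarz modulus estimate is available rather than orthogonality, and that $\mathbb{E}[\epsilon_t\mid\mathcal{F}_t]=\mathbf{0}$ is applied to the correct $\mathcal{F}_t$-measurable factor. By contrast, the reduction of the double sum is routine once Lemma~\ref{lem:weighted-seq} is re-used for the geometric weight $\tau_\Z$.
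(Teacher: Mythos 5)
Your proposal is correct and follows essentially the same route as the paper's own proof: expand $\xi_k$ into the martingale differences $\epsilon_t$ via (\ref{e-repre}), use $\mathbb{E}[\epsilon_t\mid\mathcal{F}_t]=\mathbf{0}$ to annihilate the $\mathcal{F}_t$-measurable part of $\bar{x}_k-x^*$, bound the surviving cross terms with $\mathbf{y}$'s by (\ref{B-bound}), (\ref{y-bound}), (\ref{e-bound}), and control the geometric-weighted sum by Lemma \ref{lem:weighted-seq}. The only cosmetic difference is that you telescope forward, $\bar{x}_k-x^*=(\bar{x}_t-x^*)+\sum_{l=t+1}^k(\bar{x}_l-\bar{x}_{l-1})$, as in Lemma \ref{lem:nonconv}(ii), while the paper unrolls backward to $\bar{x}_1$; after the martingale step both leave exactly the same set of pairs $\langle \alpha_{l}(\frac{\mathbf{u}^\intercal}{n}\otimes\mathbf{I}_d)\mathbf{y}_{l},(\frac{\mathbf{u}^\intercal}{n}\otimes\mathbf{I}_d)\tilde{\mathbf{B}}(k,t)\epsilon_t\rangle$ with $l\ge t$, so the arguments coincide.
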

\begin{proof}
	Recall the definition $\bar{x}_{k}= \left(\frac{\mathbf{u}^\intercal}{n}\otimes\mathbf{I}_{d}\right)\mathbf{x}_{k}$ in Lemma \ref{lem:rate},
	\begin{equation*}
	\begin{aligned}
	\bar{x}_{k}-x^*
	=\bar{x}_{k-1}-x^*-\alpha_{k-1}\left(\frac{\mathbf{u}^\intercal}{n}\otimes\mathbf{I}_{d}\right)\mathbf{y}_{k-1}=\bar{x}_{1}-x^*-\sum_{t=1}^{k-1}\alpha_t\left(\frac{\mathbf{u}^\intercal}{n}\otimes\mathbf{I}_{d}\right)\mathbf{y}_t,
	\end{aligned}
	\end{equation*}
	and then
	\begin{equation*}
	\begin{aligned}
	&\mathbb{E}\left[\left\langle \bar{x}_{k}-x^*, -\alpha_{k}\left(\frac{\mathbf{u}^\intercal}{n}\otimes\mathbf{I}_{d}\right)\xi_k\right\rangle\right]\\
	&=\mathbb{E}\left[\left\langle \bar{x}_{1}-x^*-\sum_{t=1}^{k-1}\alpha_t\left(\frac{\mathbf{u}^\intercal}{n}\otimes\mathbf{I}_{d}\right)\mathbf{y}_t, -\alpha_{k}\left(\frac{\mathbf{u}^\intercal}{n}\otimes\mathbf{I}_{d}\right)\xi_k\right\rangle\right]\\
	&=-\alpha_{k}\mathbb{E}\left[\left\langle \bar{x}_{1}-x^*-\sum_{t=1}^{k-1}\alpha_t\left(\frac{\mathbf{u}^\intercal}{n}\otimes\mathbf{I}_{d}\right)\mathbf{y}_t, \left(\frac{\mathbf{u}^\intercal}{n}\otimes\mathbf{I}_{d}\right)\sum_{t=1}^{k}\tilde{\mathbf{B}}(k,t)\epsilon_t\right\rangle\right],
	\end{aligned}
	\end{equation*}
	where $\epsilon_t=\mathbf{H}_t-\mathbf{J}_t$, $\mathbf{H}_t$ and $\mathbf{J}_t$ are defined in (\ref{alg:new form}) and Lemma \ref{lem:rate} respectively, the second equality follows from (\ref{e-repre}). Note that $\mathbb{E}\left[\left\langle\bar{x}_{0}-x^*, \left(\frac{\mathbf{u}^\intercal}{n}\otimes\mathbf{I}_{d}\right)\tilde{\mathbf{B}}(k,t)\epsilon_t\right\rangle\bigg|\mathcal{F}_t\right]=0$ and 
   \begin{equation*}
	\mathbb{E}\left[\left\langle \left(\frac{\mathbf{u}^\intercal}{n}\otimes\mathbf{I}_{d}\right)\mathbf{y}_{t_1}, \left(\frac{\mathbf{u}^\intercal}{n}\otimes\mathbf{I}_{d}\right)\tilde{\mathbf{B}}(k,t_2)\epsilon_{t_2}\right\rangle\bigg|\mathcal{F}_{t_2}\right]=0~ (t_1<t_2),
	\end{equation*}	
	where $\mathcal{F}_k$ is defined in (\ref{s-algebra}). Then
	
	\begin{equation*}
	\begin{aligned}
	\mathbb{E}\left[\left\langle \bar{x}_{k}-x^*, -\alpha_{k}\left(\frac{\mathbf{u}^\intercal}{n}\otimes\mathbf{I}_{d}\right)\xi_k\right\rangle\right]
	&\le \alpha_{k}\sum_{t_1=1}^{k-1}\sum_{t_2=1}^{t_1}\alpha_{t_1}\frac{\|\mathbf{u}\|^2}{2n^2}\normm{\tilde{\mathbf{B}}(k,t_2)}\left(\mathbb{E}\left[\|\mathbf{y}_{t_1}\|^2\right]+\mathbb{E}\left[\left\| \epsilon_{t_2}\right\|^2\right]\right)\notag\\
	&\le \alpha_{k}\sum_{t_1=1}^{k-1}\sum_{t_2=1}^{t_1}\alpha_{t_1}\frac{\|\mathbf{u}\|^2c_b}{2n^2}\tau_\Z^{k-t_2}\left(\frac{c_b^2nC_gC_f}{(1-\tau_\Z)^2}+4nC_g C_f\right)\notag\\
	&\le \frac{\|\mathbf{u}\|^2c_bc}{2n^2(1-\tau_\Z)}\left(\frac{c_b^2nC_gC_f}{(1-\tau_\Z)^2}+4nC_g C_f\right)\alpha_k\alpha_{k-1},
	\end{aligned}
	\end{equation*}
	where $c_b=\max\left\{\X,\frac{\normm{\mathbf{B}-\mathbf{I}_{n}}_\Z}{\tau_\Z}\X\right\}$ and $c$ is some constant scalar,
	the second inequality follows from (\ref{B-bound}), (\ref{y-bound}) and (\ref{e-bound}),
	the third inequality follows from Lemma \ref{lem:weighted-seq}. Noting that $\lim_{k\rightarrow\infty}\frac{\alpha_{k-1}}{\alpha_k}=1$, there exists constant $c_0>c$ such that
	$$\mathbb{E}\left[\left\langle \bar{x}_{k}-x^*,-\alpha_{k}\left(\frac{\mathbf{u}^\intercal}{n}\otimes\mathbf{I}_{d}\right)\xi_k\right\rangle\right]\le \frac{\|\mathbf{u}\|^2c_bc_0}{2n^2(1-\tau_\Z)}\left(\frac{c_b^2nC_gC_f}{(1-\tau_\Z)^2}+4nC_g C_f\right)\alpha_k^2.$$
	The proof is complete.

\end{proof}

\begin{lem}\label{lem:con-pro}
	Let $\alpha_{k}=a/(k+b)^\alpha$, $a>0$, $b\ge 0$, $\alpha\in(1/2,1)$.
	Suppose that
	\begin{itemize}
		\item[(a)] Assumptions \ref{ass-objective}-\ref{ass:matrix} hold;
		
		\item [(b)] for any $i\in\mathcal{V}$, there exist scalar $C_i$ and matrix  $\mathbf{T}_i$ such that
		\begin{equation*}
		\left\|\nabla f_i(y)-\nabla f_i(y^{'})-\mathbf{T}_i\left(y-y^{'}\right)\right\|\le C_i\|y-y^{'}\|^{1+\gamma},\quad \forall y,y^{'}\in \mathbb{R}^p, 
		\end{equation*}
		where $\gamma\in (0,1]$ satisfies that $\sum_{k=1}^\infty\frac{\alpha_k^{(1+\gamma)/2}}{\sqrt{k}}<\infty$.
	\end{itemize}
Denote  $\mathbf{M}(k,t)=\tilde{\alpha}_t\sum_{l_1=t}^k\Pi_{l_2=t+1}^{l_1}\left(\mathbf{I}_{2d}-\tilde{\alpha}_k\mathbf{H}_{\theta}\right)
,~\mathbf{N}(k,t)=\mathbf{M}(k,t)-\mathbf{H}_{\theta}^{-1}
$ and
\begin{equation*}
\eta_t^{(1)}=\left(
\begin{array}{c}
\left(-\frac{n}{\mathbf{u}^\intercal \mathbf{v}}\right)\left(\frac{\mathbf{u}^\intercal}{n}\otimes\mathbf{I}_{d}\right)\xi_t\\
\frac{\beta}{\mathbf{u}^\intercal \mathbf{v}}\sum_{j=1}^{n}\nabla g_j(x^*)\mathbf{T}_j\left(G_j(x^*;\phi_{i,t+1}^{'})-g_j(x^*)\right)
\end{array}
\right).
\end{equation*} 
We have
	\begin{equation*}
	\lim_{k\rightarrow\infty}\mathbb{E}\left[\left\|\frac{1}{\sqrt{k}}\sum_{t=1}^{k}\mathbf{N}(k,t)\eta_t^{(1)}\right\|^2\right]=0.
	\end{equation*}
\end{lem}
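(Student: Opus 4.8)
The plan is to uncover the martingale structure hidden inside $\eta_t^{(1)}$ and reduce the whole estimate to the classical Polyak--Juditsky averaging bound $\frac1k\sum_{t=1}^k\normm{\mathbf{N}(k,t)}^2\to0$. First I would split $\eta_t^{(1)}=\eta_t^{(a)}+\eta_t^{(b)}$ into its two blocks, where $\eta_t^{(b)}$ is the second block carrying the fresh inner-function noise $G_j(x^*;\phi_{j,t+1}^{'})-g_j(x^*)$ and $\eta_t^{(a)}$ is the first block $P^{(4)}_t$ carrying $\xi_t$. The term $\eta_t^{(b)}$ is a martingale difference adapted to $\mathcal{F}_t$ (the $\phi_{j,t+1}^{'}$ are fresh and independent of $\mathcal{F}_t$), with $\mathbb{E}[\|\eta_t^{(b)}\|^2]$ uniformly bounded through Assumption \ref{ass-objective}(c)--(d) (Jensen gives $\|\nabla g_j(x^*)\|\le\sqrt{C_g}$, and the variance is bounded by $V_g$). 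The difficulty, flagged already in the introduction, is that $\eta_t^{(a)}$ is \emph{not} a martingale difference, since $\xi_t$ accumulates past noise. Using $\mathbb{E}[\|A+B\|^2]\le 2\mathbb{E}[\|A\|^2]+2\mathbb{E}[\|B\|^2]$ I treat the two contributions separately.

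For the $\eta^{(b)}$ part, since $\mathbf{N}(k,t)$ is deterministic and $\{\eta_t^{(b)}\}$ is a martingale difference, orthogonality annihilates the cross terms and $\mathbb{E}[\|\tfrac1{\sqrt k}\sum_t\mathbf{N}(k,t)\eta_t^{(b)}\|^2]\le \tfrac{C}{k}\sum_{t=1}^k\normm{\mathbf{N}(k,t)}^2$. For the $\eta^{(a)}$ part I would substitute the representation (\ref{e-repre}), $\xi_t=\sum_{s=1}^t\tilde{\mathbf{B}}(t,s)\epsilon_s$, and swap the order of summation to obtain $\frac1{\sqrt k}\sum_t\mathbf{N}(k,t)\Pi\xi_t=\frac1{\sqrt k}\sum_{s=1}^k\big(\sum_{t=s}^k\mathbf{N}(k,t)\Pi\tilde{\mathbf{B}}(t,s)\big)\epsilon_s$, where $\Pi$ embeds the deterministic map $-\frac{n}{\mathbf{u}^\intercal\mathbf{v}}(\frac{\mathbf{u}^\intercal}{n}\otimes\mathbf{I}_d)$ into the first block. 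Now $\{\epsilon_s\}$ \emph{is} a martingale difference with $\mathbb{E}[\|\epsilon_s\|^2]\le 4nC_fC_g$ by (\ref{e-bound}), so orthogonality applies once more, giving the second moment as $\frac1k\sum_s\normm{\sum_{t=s}^k\mathbf{N}(k,t)\Pi\tilde{\mathbf{B}}(t,s)}^2\mathbb{E}[\|\epsilon_s\|^2]$. Bounding the inner matrix with the geometric estimate (\ref{B-bound}), $\normm{\tilde{\mathbf{B}}(t,s)}\le c_b\tau_\Z^{t-s}$, applying Cauchy--Schwarz to the $\tau_\Z^{t-s}$-weighted sum, and swapping the summation order once more collapses this contribution to $\frac{C}{k(1-\tau_\Z)^2}\sum_{t=1}^k\normm{\mathbf{N}(k,t)}^2$ as well.

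The hard part is the remaining averaging estimate $\frac1k\sum_{t=1}^k\normm{\mathbf{N}(k,t)}^2\to0$, which is the heart of the Polyak--Juditsky method. I would first verify the positive stability of $\mathbf{H}_\theta$: being block upper-triangular, its spectrum is $\mathrm{spec}(\tfrac1n\mathbf{H})\cup\{\tfrac{n\beta}{\mathbf{u}^\intercal\mathbf{v}}\}$, and both $\mathbf{H}$ (positive definite by $\mu$-strong convexity through condition (c)) and $\tfrac{n\beta}{\mathbf{u}^\intercal\mathbf{v}}>0$ lie in the open right half-plane. This yields the uniform bound $\normm{\mathbf{M}(k,t)}\le C$ via \cite[Lemma 1(ii)]{Polyak1992}, and together with the regular variation of $\alpha_k=a/(k+b)^\alpha$ for $\alpha\in(1/2,1)$ (so $\alpha_k\to0$ and $\alpha_k/\alpha_{k+1}\to1$) gives $\mathbf{M}(k,t)\to\mathbf{H}_\theta^{-1}$ for the bulk of indices $t$, forcing the Ces\`aro average of $\normm{\mathbf{N}(k,t)}^2$ to vanish. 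Substituting this limit into the two bounds above sends both contributions to zero, which completes the proof. I expect the summation-swap plus geometric-decay bookkeeping in the $\eta^{(a)}$ step to be the most delicate portion, and the stability/averaging lemma for $\mathbf{N}(k,t)$ to be the genuine analytic obstacle.
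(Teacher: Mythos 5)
Your proposal is correct, and it handles the central difficulty --- that the first block $P^{(4)}_t$ of $\eta_t^{(1)}$ is \emph{not} a martingale difference --- by a genuinely different device than the paper. The paper keeps the double sum over pairs $(t_1,t_2)$ with $t_1\le t_2$, expands only the later term $\xi_{t_2}=\sum_{l\le t_2}\tilde{\mathbf{B}}(t_2,l)\epsilon_l$, kills the summands with $l>t_1$ by conditioning, and bounds the surviving cross-covariances by $\normm{\tilde{\mathbf{B}}(t_2,l)}\,\mathbb{E}\left[\|\xi_{t_1}\|\|\epsilon_l\|\right]$ together with the geometric decay (\ref{B-bound}); after invoking the uniform bound $c_N=\sup_{k,t}\normm{\mathbf{N}(k,t)}$ this yields a bound of order $\frac{1}{k}\sum_{t}\normm{\mathbf{N}(k,t)}$. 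You instead swap the order of summation so that the whole sum becomes an exact martingale transform $\frac{1}{\sqrt{k}}\sum_{s=1}^{k}\left(\sum_{t=s}^{k}\mathbf{N}(k,t)\Pi\tilde{\mathbf{B}}(t,s)\right)\epsilon_s$ in the genuine martingale differences $\epsilon_s$, where $\Pi$ is the deterministic first-block embedding of $-\frac{n}{\mathbf{u}^\intercal\mathbf{v}}\left(\frac{\mathbf{u}^\intercal}{n}\otimes\mathbf{I}_d\right)$; orthogonality then applies exactly, and Cauchy--Schwarz against the geometric weights collapses the bound to $\frac{C}{k(1-\tau_{\mathbf{B}})^2}\sum_{t}\normm{\mathbf{N}(k,t)}^2$. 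Both routes terminate in the same Polyak--Juditsky averaging fact; your version needs the squared Ces\`aro average $\frac{1}{k}\sum_{t}\normm{\mathbf{N}(k,t)}^2\to 0$, which is equivalent to the paper's first-power statement $\frac{1}{k}\sum_{t}\normm{\mathbf{N}(k,t)}\to 0$ from \cite[Lemma 1 (ii)]{Polyak1992} once $c_N<\infty$ is in hand, exactly as you argue. Two points in your favour: your explicit check that $\mathbf{H}_\theta$ is Hurwitz (block upper-triangular, spectrum equal to that of $\frac{1}{n}\mathbf{H}$ together with $\frac{n\beta}{\mathbf{u}^\intercal\mathbf{v}}>0$) verifies a hypothesis of the cited lemma that the paper leaves implicit, and your summation swap gives cleaner bookkeeping than the paper's cross-covariance estimates. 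The only (harmless) imprecision is that $\eta_t^{(b)}$ is a martingale difference relative to $\mathcal{F}_{t+1}$ rather than $\mathcal{F}_t$, since it involves the fresh sample $\phi_{j,t+1}^{'}$.
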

\begin{proof}
	Note that 
	\begin{equation*}
	\eta_t^{(1)}=\left(
	\begin{array}{c}
	\left(-\frac{n}{\mathbf{u}^\intercal \mathbf{v}}\right)\left(\frac{\mathbf{u}^\intercal}{n}\otimes\mathbf{I}_{d}\right)\xi_t\\
	\mathbf{0}
	\end{array}
	\right)+\left(
	\begin{array}{c}
	\mathbf{0}\\
	\frac{\beta}{\mathbf{u}^\intercal \mathbf{v}}\sum_{j=1}^{n}\nabla g_j(x^*)\mathbf{T}_j\left(G_j(x^*;\phi_{i,t+1}^{'})-g_j(x^*)\right)
	\end{array}
	\right)
	\end{equation*}
	and 
	\begin{align*}
	&\mathbb{E}\left[\left\langle\xi_{t_1},\xi_{t_2}\right\rangle\right]=\mathbb{E}\left[\mathbb{E}\left[\left\langle\xi_{t_1},\xi_{t_2}\right\rangle\big|\mathcal{F}_{\min\{t_1,t_2\}}\right]\right]=\mathbb{E}\left[\left\langle\xi_{\min\{t_1,t_2\}},\sum_{l=1}^{\min\{t_1,t_2\}}\tilde{\mathbf{B}}(\max\{t_1,t_2\},l)\epsilon_l\right\rangle\right]~(t_1\le t_2),\\
	&\mathbb{E}\left[\left\langle G_j(x^*;\phi_{i,t_1}^{'})-g_j(x^*),G_j(x^*;\phi_{i,t_2}^{'})-g_j(x^*)\right\rangle\big|\mathcal{F}_{\min\{t_1,t_2\}}\right]=0 ~(t_1\ne t_2),
	\end{align*}
	where $\mathcal{F}_t$ is defined in (\ref{s-algebra}).
	Then
	{\small\begin{equation*}
	\begin{aligned}
	\mathbb{E}\left[\left\|\frac{1}{\sqrt{k}}\sum_{t=1}^{k}\mathbf{N}(k,t)\eta_t^{(1)}\right\|^2\right]
	&=\mathbb{E}\left[\mathbb{E}\left[\left\|\frac{1}{\sqrt{k}}\sum_{t=1}^{k}\mathbf{N}(k,t)\eta_t^{(1)}\right\|^2\bigg|\mathcal{F}_{\min\{t_1,t_2\}}\right]\right]\\
	&\le \left(\frac{\|\mathbf{u}\|}{\mathbf{u}^\intercal \mathbf{v}}\right)^2\frac{4}{k}\sum_{t_1=1}^{k}\sum_{t_2=t_1}^{k}\normm{\mathbf{N}(k,t_1)}\normm{\mathbf{N}(k,t_2)}\sum_{l=1}^{t_1}\normm{\tilde{\mathbf{B}}(t_2,l)}\mathbb{E}\left[\left\|\xi_{t_1}\right\|\left\|\epsilon_l\right\|\right]\\
	&\quad+\frac{2}{k}\sum_{t=1}^{k}\normm{\mathbf{N}(k,t)}^2\mathbb{E}\left[\left\|\frac{\beta}{\mathbf{u}^\intercal \mathbf{v}}\sum_{j=1}^{n}\nabla g_j(x^*)\mathbf{T}_j\left(G_j(x^*;\phi_{i,t}^{'})-g_j(x^*)\right)\right\|^2\right]\\
	&\le c_bc_N(c_b^2+1)nC_fC_g\left(\frac{\|\mathbf{u}\|}{\mathbf{u}^\intercal \mathbf{v}}\right)^2\frac{1}{(1-\tau_\Z)^4}\frac{8}{k}\sum_{t_1=1}^{k}\normm{\mathbf{N}(k,t_1)}\\
	&\quad+n\left(\frac{\beta }{\mathbf{u}^\intercal \mathbf{v}}\right)^2\left(\sum_{j=1}^{n}\|\nabla g_j(x^*)\|^2\|\mathbf{T}_j\|^2\right)V_gc_N\frac{2}{k}\sum_{t=1}^{k}\normm{\mathbf{N}(k,t)},
	\end{aligned}
	\end{equation*}}
	where $c_b=\max\left\{\X,\frac{\normm{\mathbf{B}-\mathbf{I}_{n}}_\Z}{\tau_\Z}\X\right\},~c_N=\sup_{k,t}\normm{\mathbf{N}(k,t)}$, the second inequality follows from the fact $\sup_{k,t}\normm{\mathbf{N}(k,t)}<\infty$ \cite[Lemma 1 (ii)]{Polyak1992}, (\ref{B-bound}), (\ref{e-bound}), Lemma  \ref{lem:nonconv} (i) and Assumption \ref{ass-objective} (c). By \cite[Lemma 1 (ii)]{Polyak1992}, $$\lim_{k\rightarrow\infty}\frac{1}{k}\sum_{t=1}^{k}\normm{\mathbf{N}(k,t)}=0,$$
	which implies $	\lim_{k\rightarrow\infty}\mathbb{E}\left[\left\|\frac{1}{\sqrt{k}}\sum_{t=1}^{k}\mathbf{N}(k,t)\eta_t^{(1)}\right\|^2\right]=0$. The proof is complete.
\end{proof}

\end{appendices}

\end{document}